\documentclass[a4paper,12pt]{article}

\usepackage[english]{babel}
\usepackage[hmargin=2.5cm,vmargin=2.5cm]{geometry}
\usepackage{indentfirst}
\usepackage{hyperref}
\usepackage{bm,tikz}
\usetikzlibrary{positioning}
\usepackage[font=small]{caption}
\usepackage{amsmath,amssymb,amsthm,mathtools}
\usepackage{braket}
\allowdisplaybreaks[2]

\usepackage{enumitem}
\usepackage{booktabs,array}

\usepackage{graphicx}
\usepackage{tikz}
\usetikzlibrary{
  calc,
  matrix,
  decorations.pathmorphing,
  decorations.text,
  decorations.markings
}
\tikzset{
  every node/.style={minimum size=0pt, inner sep=1pt},
  ->-/.style={
    decoration={markings, mark=at position #1 with {\arrow{latex}}},
    postaction={decorate}
  }
}

\usepackage{xcolor}
\usepackage{cancel}
\usepackage{centernot}
\usepackage[normalem]{ulem}
\usepackage{authblk}
\usepackage{caption,subcaption}

\theoremstyle{plain}
\newtheorem{theorem}{Theorem}[section]

\newtheorem{thmA}{Theorem}

\newtheorem{lemma}[theorem]{Lemma}
\newtheorem{lem}[theorem]{Lemma}
\newtheorem{lemA}[thmA]{Lemma}

\newtheorem{corollary}[theorem]{Corollary}
\newtheorem{cor}[theorem]{Corollary}

\newtheorem{conj}[theorem]{Conjecture}
\newtheorem{conjA}[thmA]{Conjecture}

\newtheorem{obs}[theorem]{Observation}
\newtheorem{definition}[theorem]{Definition}

\newtheorem{remark}[theorem]{Remark}
\theoremstyle{definition}

\newcommand{\Z}{\mathbb{Z}}

\newcommand\F{\mathcal{F}}
\renewcommand\O{\mathcal{O}}

\newcommand\G{\mathcal{G}}

\renewcommand\le\leqslant
\renewcommand\leq\leqslant
\renewcommand\ge\geqslant
\renewcommand\geq\geqslant
\renewcommand\a{\alpha}
\renewcommand\b{\beta}
\newcommand\ord{\textrm{ord}}

\def\aftermath{\par\vspace{-\belowdisplayskip}\vspace{-\parskip}\vspace{-\baselineskip}}

\usepackage[square,sort,comma,numbers]{natbib}
\usepackage[nameinlink]{cleveref}

\newcommand{\customfootnote}[1]{%
  \bgroup
  \renewcommand\thefootnote{}%
  \footnote{#1}%
  \addtocounter{footnote}{-1}%
  \egroup
}

\usetikzlibrary{decorations.markings, arrows.meta}
\tikzset{
  mid arrow/.style={
    postaction={
      decorate,
      decoration={
        markings,
        mark=at position .7 with {\arrow{Stealth}}
      }
    }
  }
}
\tikzset{
  lmid arrow/.style={
    postaction={
      decorate,
      decoration={
        markings,
        mark=at position .625 with {\arrow{Stealth}}
      }
    }
  }
}

\begin{document}
\title{Reconfiguration of Nowhere-zero Flows}
\renewcommand{\thefootnote}{\fnsymbol{footnote}}

\newcommand{\nankaiaffil}{School of Mathematical Sciences and LPMC, Nankai University, Tianjin 300071, China; \texttt{\{%
\href{mailto:lijiaao@nankai.edu.cn}{lijiaao}, %
\href{mailto:wangzhou@nankai.edu.cn}{wangzhou}%
    \}@nankai.edu.cn} and 
    \texttt{\{%
\href{mailto:suboll@mail.nankai.edu.cn}{suboll},%
\href{mailto:xny@mail.nankai.edu.cn}{xny}%
  \}@mail.nankai.edu.cn};
    Research of JL supported by National Natural Science Foundation of China (No. 12571371) and Natural Science Foundation of Tianjin (No. 24JCJQJC00130); Research of ZW supported by National Natural Science Foundation of China (No. 12301444).}

\author{\normalsize
  Daniel W. Cranston\thanks{William \& Mary, Department of Mathematics; Williamsburg, VA, USA; 
 \texttt{
\href{mailto:dcransto@gmail.com}{dcransto@gmail.com}}}, 
  Jiaao Li\textsuperscript{\textdagger}, 
  Bo Su\textsuperscript{\textdagger},\\ 
  Zhouningxin Wang\textsuperscript{\textdagger}, 
  and 
  Ningyan Xu\thanks{\nankaiaffil}
}
\renewcommand{\thefootnote}{\arabic{footnote}}
\setcounter{footnote}{0}

\date{}
\maketitle
\begin{abstract}
Fix an abelian group $A$, a graph $G$, and nowhere-zero $A$-flows $f'$ and $f''$ on $G$. Now $f'$ and $f''$ are \emph{$A$-flow-adjacent} if there exists a cycle $C$ in $G$ such that $f'(e)-f''(e)=0$ for all edges $e\notin E(C)$. 
And $f'$ and $f''$ are \emph{$A$-flow-equivalent} if there exists a sequence $f_0,\ldots,f_s$ of $A$-flows such that $f_0=f'$, $f_s=f''$, and $f_i$ and $f_{i-1}$ are $A$-flow-adjacent for all $i\in[s]$.
Given a group $A$, we seek conditions on a graph $G$ such that all $A$-flows on $G$ are pairwise $A$-flow-equivalent; in this case, we say that $G$ is \emph{$A$-flow-connected}. Analogously, we define $k$-flow-connectedness for nowhere-zero (integer) $k$-flows. The notions of $A$-flow-connectedness and $k$-flow-connectedness were first investigated by Esperet et al., who showed, among other results, that every $2$-edge-connected graph is $A$-flow-connected whenever $A=\mathbb{Z}_2^8$ or $|A| \ge 1.15\times 10^{694}$. 

In this paper, we first characterize the graphs that are $\Z_3$-flow-connected and that are $3$-flow-connected. We show that every 2-edge-connected graph is $A$-flow-connected if and only if this is true for every 2-edge-connected \emph{cubic} graphs. We show that all cubic bipartite graphs are $\Z_4$-flow-connected, and construct other cubic graphs that are and are not $\Z_4$-flow-connected. We conjecture that every Eulerian graph is $k$-flow-connected and $A$-flow-connected whenever $k$ or $|A|$ is at least 4; and provide evidence for this conjecture. Finally, we consider $4$-edge-connected graphs $G$. Here, we show that $G$ is $A$-flow-connected whenever $|A|\ge 5.3\times 10^6$.
\end{abstract}

\section{Introduction}
\label{intro-sec}
Fix a graph $G$ and an abelian group\footnote{All abelian groups in this paper are finite, except when we explicitly specify the integers $\mathbb{Z}$. Sometimes we state this explicitly, but not always.} $A$.
An \emph{$A$-flow} on $G$ is an ordered pair $(D,f)$, consisting of an orientation $D$ of $G$ and a function $f:E(G)\to A$ such that
$\sum_{e\in E_D^+(v)} f(e)
-
\sum_{e\in E_D^-(v)} f(e)
=0_A$
for every vertex $v$, where $E_D^+(v)$ and $E_D^-(v)$ denote, respectively, the sets of edges directed out of and into $v$.
Informally, we require that ``flow in equals flow out'' at each vertex.  An $A$-flow is \emph{nowhere-zero} if $f(e)\ne 0_A$ for each edge~$e$.  Tutte defined nowhere-zero flows in the 1940s and proved
that every planar
graph is $k$-face-colorable if and only if it has a nowhere-zero $\Z_k$-flow.  Motivated in part by this
duality, he conjectured\footnote{In fact,  Tutte phrased his conjectures in terms of integer-valued (rather than group-valued) flows, which we define soon.  But he also proved  the equivalence of the two formulations.} 
the following.

\begin{conj} The 3 statements below all hold.
	\begin{itemize}
		\item Every 2-edge-connected graph has a nowhere-zero $\Z_5$-flow.
		\item Every 2-edge-connected graph with no Petersen minor has a nowhere-zero $\Z_4$-flow.
		\item Every 4-edge-connected graph has a nowhere-zero $\Z_3$-flow.
	\end{itemize}
\end{conj}
These $3$ conjectures are all more than 50 years old, and nowhere-zero flows have played a central role in
structural and chromatic graph theory since they were posed. Despite some beautiful partial results~\cite{jaeger,seymour,LTWZ}, all $3$ conjectures remain open.

Given a graph $G$ and two proper $k$-colorings $\a$ and $\b$, the problem of \emph{$k$-coloring reconfiguration}
seeks to transform $\a$ to $\b$ by changing the color of one vertex at a time, requiring that each intermediate $k$-coloring is again proper. 
This topic has close ties to Glauber dynamics and sampling a random proper $k$-coloring.  Work in this area goes back many years, but it has received a more unified
treatment starting with the dissertation of Cereceda~\cite{cereceda-diss}.

Recently, Esperet et al.~\cite{esperet-et-al} initiated the study of reconfiguration of nowhere-zero flows. Now in each reconfiguration step, we may change the flow values on some cycle; here and throughout a \emph{cycle} is a connected 2-regular graph. (This definition extends
Tutte's duality in planar graphs between flows and face colorings.) Given a graph $G$, an abelian group $A$, and $A$-flows $\a,\b$, we again ask whether we can reconfigure $\a$ into $\b$. More generally, we seek conditions on $G$ and $A$ that allow us to transform every $A$-flow on $G$ into every other $A$-flow.  This motivates the following
definitions.

We consider two $A$-flows $(D,f)$ and $(D',f')$ to be the \emph{same} if for every edge $e$ with $D(e)=D'(e)$ we have $f(e)=f'(e)$, and for every edge $e$ with $D(e)=-D'(e)$ we have $f(e)+f'(e)=0_A$.
Due to the definition of ``same'', the orientation of a flow is typically unimportant, and we generally omit it; but we do consider the orientation fixed (across different flows). 
Nearly all flows considered in this paper are nowhere-zero; therefore, we typically \textbf{write $\bm{A}$-flows or $\bm{k}$-flows (defined below), and will explicitly indicate when flows are allowed to take the value 0.}
Two $A$-flows $f'$ and $f''$ on a graph $G$ are \emph{$A$-flow-adjacent} if there exists a cycle $C$ in $G$ such that $f'(e)=f''(e)$ for all $e\notin E(C)$.
And $f'$ and $f''$ are \emph{$A$-flow-equivalent} if there exists a sequence $f_0,\ldots,f_s$, with $f'=f_0$ and $f''=f_s$, of $A$-flows on $G$ such that $f_i$ and $f_{i-1}$ are $A$-flow-adjacent for each $i\in [s]$.  A graph $G$ is \emph{$A$-flow-connected} if all $A$-flows on $G$ are pairwise $A$-flow-equivalent.  
Now the \emph{$A$-flow reconfiguration graph} of $G$, denoted $\mathcal{F}(G,A)$, has as its vertices the 
$A$-flows on $G$, and two vertices are adjacent if and only if the corresponding flows are $A$-flow-adjacent.\footnote{If $G$ has no $A$-flow, i.e. $\F(G,A)$ is empty, then we say that $G$ is indeed $A$-flow-connected.  But this is a minor point, and it will rarely matter.} 

For $A=\mathbb Z$, a \emph{nowhere-zero $k$-flow} is an integer flow $(D,f)$ satisfying $0<|f(e)|<k$ for all $e\in E(G)$.
For each positive integer $k$, we analogously define \emph{$k$-flow-adjacency}, \emph{$k$-flow-equivalence}, \emph{$k$-flow-connectedness}, and \emph{$k$-flow reconfiguration graph} $\mathcal{F}(G,k)$.  

Given a $k$-flow on a graph $G$, it is easy to get a $\Z_k$-flow by taking each flow value modulo $k$.  The other direction is less obvious, but Tutte proved it: a graph $G$ has a $\Z_k$-flow if and only if $G$ has a $k$-flow; see Lemma~\ref{Zk-k-equiv-lem}. It is therefore natural to ask whether the same equivalence holds for flow reconfiguration. Esperet et al. believed that it does.

\begin{conjA}[\cite{esperet-et-al}]
For each graph $G$ and each positive integer $k$, the graph $\F(G,k)$ is connected if and only if the graph $\F(G,\Z_k)$ is connected.
\label{conjA}
\end{conjA}

One direction of this conjecture is easy: Every walk in $\F(G,k)$ maps to a walk in $\F(G,\Z_k)$. But the other direction remains open. Tutte proved that if $A$ and $B$ are abelian groups with $|A|=|B|$, then $G$ has an $A$-flow if and only if $G$ has a $B$-flow; more generally, $G$ has the same number of $A$-flows and $B$-flows. But the situation for flow-reconfiguration is more complex. Esperet et al.~\cite{esperet-et-al} constructed graphs $G$ such that $\F(G,\Z_2\times \Z_2)$ is connected, but $\F(G,\Z_4)$ is disconnected. The complete graph $K_4$ provides the simplest example, but there are many more.

Motivated by Tutte's $5$-flow conjecture, we seek to determine for which groups $A$ it is 
true that every $2$-edge-connected graph is $A$-flow. Esperet et al. \cite{esperet-et-al} showed that this is true when $A=\Z_2^8$. They also proved that it is true for all abelian groups $A$ that are sufficiently large; the bound arising from their proof is $|A|\ge 1.15\times 10^{694}$.

\subsection*{Our Contributions}

Our focus in this paper is on
determining conditions under which a graph $G$ is (or is not) $A$-flow-connected for certain groups $A$.
We will also say a bit about being $k$-flow-connected for positive integers $k$, although that
topic is less well-understood.

In Section~\ref{Z3-sec}, we characterize graphs that are $\Z_3$-flow-connected and graphs that are $3$-flow-connected. In Section~\ref{Z4flow-sec}, we study $\Z_4$-flow-connectedness of cubic graphs. We show that every cubic bipartite graph $G$ is $4$-flow-connected, and thus $\Z_4$-flow-connected.  We also construct other cubic graphs that are and are not $\Z_4$-flow-connected. In Section~\ref{sec:Eulerian}, we prove that every Eulerian graph is $2k$-flow-connected for every positive integer $k$, and conjecture that every Eulerian graph is $A$-flow-connected whenever $|A|\ge 4$. 

In Section~\ref{AFRC-sec}, we introduce a type of ``reducible'' configuration $H$ for being $A$-flow-connected. A graph $H$ is \emph{reducible} if each graph $G$ is $A$-flow-connected if and only if $G/H$ is $A$-flow-connected. Using this tool, we prove that each $n$-vertex graph is $A$-flow-connected whenever $|A|=\Omega(\log n)$.
We also show, for each abelian group $A$, that all 2-edge-connected graphs are $A$-flow-connected if and only if this is true for all 2-edge-connected \emph{cubic} graphs.
In Section~\ref{4edge-sec}, we consider 4-edge-connected graphs.  By adapting the above-mentioned work of Esperet et al. \cite{esperet-et-al}, we prove that every 4-edge-connected (or Eulerian) graph is $A$-flow-connected whenever $|A|\ge 5,251,339$.

\section{Characterizing graphs that are \texorpdfstring{$\Z_3$}{Z3}-Flow-connected and 3-Flow-connected}
\label{Z3-sec}

The main goal of this section is to prove Theorem~\ref{Z3-flow-main-thm}, characterizing graphs that are $3$-flow-connected and that are $\Z_3$-flow-connected; these two graph classes turn out to be identical.  This result follows immediately from 
Theorem~\ref{thm:mod-k-orientation}, by taking $k=1$.

\begin{theorem}\label{cor:3-flow}\label{Z3-flow-main-thm}
    If a graph $G$ admits a $3$-flow, 
    then the following statements are equivalent:
    \begin{itemize}
        \item $G$ is $3$-flow-connected.
        \item $G$ is $\Z_3$-flow-connected.
        \item $G$ is Eulerian and, for every two distinct vertices $w$ and $x$, $G$ contains at most $4$ 
		edge-disjoint $w,x$-paths.
    \end{itemize}
\end{theorem}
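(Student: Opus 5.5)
The plan is to translate everything into orientations. First, for nowhere-zero $\Z_3$-flows: after reorienting, every nowhere-zero $\Z_3$-flow can be taken to have value $1$ on every edge. So these flows correspond bijectively to \emph{mod-3-orientations}, that is, orientations $D$ with $d^+_D(v)-d^-_D(v)\equiv 0 \pmod 3$ at every vertex.

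Next I would analyse a single move. If $f'$ and $f''$ are $\Z_3$-flow-adjacent via a cycle $C$, then $f'-f''$ is a $\Z_3$-flow supported inside $C$. Its support is therefore either empty or all of $C$, so $f'-f''=\pm\chi_C$ for some orientation of $C$. Since every edge stays nonzero, adding $\pm 1$ to an edge must turn $1$ into $2=-1$, so $C$ must be a directed cycle of $D$ and the move reverses it.

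Reversing a directed cycle preserves every outdegree. Conversely, two orientations with the same outdegree function differ on an Eulerian subgraph that is directed in $D$. That subgraph decomposes into directed cycles, which can be reversed one at a time. Hence the components of $\F(G,\Z_3)$ are exactly the classes of mod-3-orientations with a fixed outdegree function. If some mod-3-orientation $D$ has imbalance $b(v)=d^+_D(v)-d^-_D(v)\ne 0$ somewhere, then $D$ and its reverse $D^{-1}$ have different imbalances. So $G$ is $\Z_3$-flow-connected if and only if every mod-3-orientation is Eulerian. In particular $G$ must be Eulerian, since $G$ has a $3$-flow and hence a mod-3-orientation.

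So it remains to show, for Eulerian $G$, that some non-Eulerian mod-3-orientation exists if and only if $\lambda(w,x)\ge 6$ for some pair $w,x$. Here $\lambda(w,x)$ is even because $G$ is Eulerian, and $b(v)\equiv d(v)\equiv 0\pmod 2$, so in fact $b(v)\in 6\Z$.

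For ($\Leftarrow$): fix an Eulerian orientation $D_0$. Every edge cut of $D_0$ is balanced, so the directed version of Menger's theorem gives $\lambda(w,x)/2\ge 3$ edge-disjoint directed $x\to w$ paths. Reversing them creates imbalance $+6$ at $w$ and $-6$ at $x$ and leaves every other vertex balanced.

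For ($\Rightarrow$), which I expect to be the main obstacle: given a non-Eulerian mod-3-orientation $D$, the edges where $D$ and $D_0$ disagree form a subgraph $H$ directed as in $D_0$, with net out-degree $b/2\in 3\Z$. I would choose $D$ to minimise $\sum_v|b(v)|$ and argue that exactly two vertices $w,x$ are unbalanced, with $b(w)=6=-b(x)$. The idea is this: if a positive vertex sends its surplus to two different negative vertices, then one could reverse a suitable triple of edge-disjoint paths in $H$ and reduce the total imbalance while keeping every value in $6\Z$. Failing that, I would argue through a minimum cut $S$ separating a positive vertex from a negative one. The net flow across $\delta(S)$ equals $\tfrac12\sum_{S}b\in 3\Z$, and $|\delta(S)|\le 4$ would force this to be $0$; one then contracts a side and inducts. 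Once only $w$ and $x$ are unbalanced, $H$ carries $3$ units from $w$ to $x$, giving $\lambda(w,x)\ge 6$.

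Finally, for $3$-flows: mapping a walk in $\F(G,3)$ to one in $\F(G,\Z_3)$ shows that $3$-flow-connectedness implies $\Z_3$-flow-connectedness. For the converse, assume the third condition, so every mod-3-orientation is Eulerian. A $3$-flow $f$ reduces modulo $3$ to an Eulerian orientation $D'$, and $f-\chi_{D'}$ is an integer flow with values in $\{0,\pm3\}$. I would show that its support can be cleared one cycle at a time by $3$-flow moves that replace $\pm2$ by $\mp1$ along a cycle, each such move being a legal $3$-flow-adjacency. This reduces every $3$-flow to an all-$\pm1$ flow, that is, to an Eulerian orientation. Eulerian orientations are all connected by directed-cycle reversals, which are legal integer moves $f\mapsto -f$ on $C$.
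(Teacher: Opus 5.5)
Most of your outline is correct. The parts that work are: the translation to mod-$3$-orientations; the analysis of a single move; your proof that orientations with equal out-degree functions are joined by directed-cycle reversals (the disagreement set is a directed Eulerian subgraph, which is simpler than the paper's splitting-off proof of Lemma~\ref{flip-equiv-lem}); the $(\Leftarrow)$ construction via directed Menger in an Eulerian orientation; and the last paragraph on integer $3$-flows, which is essentially the paper's argument for Theorem~\ref{thm:mod-k-orientation} with $k=1$. One small addition: for ``$3$-flow-connected $\Rightarrow$ $\Z_3$-flow-connected'' you also need the reduction map to be surjective, which is Lemma~\ref{Zk-k-equiv-lem}.

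The genuine gap is the step you flag yourself: that a non-Eulerian mod-$3$-orientation $D$ forces a pair with $\lambda(w,x)\ge 6$.

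\emph{Your first idea does not work.} Suppose the three $H$-paths leaving a positive vertex $u$ end at three different negative vertices. Reversing them lowers $b(u)$ by $6$ but raises each sink by only $2$, which leaves $6\Z$, so this is not a move between mod-$3$-orientations. To stay in that class you need three edge-disjoint paths with the \emph{same} two ends, which is exactly what you are trying to produce. So the claim that a minimiser of $\sum_v|b(v)|$ has only two unbalanced vertices is unsupported; it is true only a posteriori.

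\emph{Your fallback is incomplete.} The cut computation is right: if $|\delta(S)|\le 4$ then $\sum_{v\in S}b(v)\in 6\Z\cap[-4,4]=\{0\}$. But ``contract a side and induct'' requires the contracted graph to still satisfy $\lambda\le 4$ for every pair, and contraction can only \emph{increase} local edge-connectivity. You would need the Gomory--Hu uncrossing lemma, which applies because $S$ is a minimum cut between two specified vertices, and you do not invoke it.

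\emph{Repairs.} The shortest repair of your route is to apply your cut computation to the $|V(G)|-1$ fundamental cuts of a Gomory--Hu tree. Each has at most $4$ edges, so every fundamental-cut sum of $b$ vanishes, and peeling leaves gives $b\equiv 0$.

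The paper avoids cut trees. It decomposes $D$ itself (not $H$) by Lemma~\ref{lem:FlowDecomposition} into directed cycles and $|b(v)|$ paths at each unbalanced $v$. It then forms the bipartite multigraph whose edges are these paths and splits vertices to make it $6$-regular. Finally it applies Lemma~\ref{bip-paths-lem} to obtain six edge-disjoint trails in $G$ between some positive $w$ and negative $x$.

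Decomposing $D$ also fixes a smaller slip at the end of your sketch. Three paths in $H$ by themselves give only three edge-disjoint $w,x$-paths; you need the other three from $G-E(H)$.
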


During our proof we develop tools and a framework that allow us to prove results of the same flavor that are more general. We begin by proving a key ``flip-equivalence'' lemma.
We use the term \emph{Eulerian} for both directed graphs, meaning that $d^+(v)=d^-(v)$ for all vertices $v$, and undirected graphs, meaning that every vertex has even degree. In each instance, our intended
meaning should be clear from context.
For a digraph $D_i$, we typically write $d^+_i(v)$ and $d^-_i(v)$, rather than the more cumbersome $d^+_{D_i}(v)$ and $d^-_{D_i}(v)$.

Note that each $\Z_3$-flow $f$ is the same as some $\Z_3$-flow $f'$ with $f'(e)=1$ for all $e\in E(G)$. Indeed, if $f(e)=2$ for some edge $e$, then we simply reverse the orientation of $e$ and change its flow value to $1$. For each directed cycle $C$ with $f(e)=1$ for all $e\in E(C)$, the only way that we can change flow values 
on $C$ to get another $\Z_3$-flow is to add $1$; equivalently, we can reverse all the edges of $C$. This motivates the following definition.

\begin{definition}
	Two orientations $D'$ and $D''$ of a graph are \emph{flip-adjacent} if $D'$ can be transformed
	to $D''$ by reversing the edges of a directed cycle. And two orientations $D'$ and $D''$ are 
	\emph{flip-equivalent} if there exists a sequence $D_0,\ldots,D_s$ with $D_0=D'$ and $D_s=D''$ such that $D_i$ and $D_{i-1}$ are flip-adjacent for each $i\in[s]$. 
\end{definition}

It is easy to check that flip-equivalence is indeed an equivalence relation. Thus, the $\Z_3$-flow-connectedness of a graph $G$ reduces to---in fact, is equivalent to---the flip equivalence of all its $\Z_3$-orientations (these are orientations $D$ of $G$ such that $d_D^+(v) - d_D^-(v) \equiv 0 \pmod{3},$ for each $v\in V(G)$, where $d_D^+(v)$ and $d_D^-(v)$ denote, respectively, the out-degree and in-degree of $v$).
Clearly, reversing the edges of a directed cycle preserves the values of $d^+(v)$ and $d^-(v)$ for all $v\in V(G)$. So, for two orientations $D_1$ and $D_2$ of $G$ to be flip-equivalent, we must have
$d^+_1(v)=d^+_2(v)$ and $d^-_1(v)=d^-_2(v)$ for all vertices $v$.  Our first lemma shows that this obvious necessary condition is also sufficient.

\begin{lem}[Flip-Equivalence Lemma]
	Two orientations $D_1$ and $D_2$ of a multigraph $G$ are flip-equivalent if and only if 
	$d_1^+(v)=d_2^+(v)$ for all $v\in V(G)$.
	\label{flip-equiv-lem}
\end{lem}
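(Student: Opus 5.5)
The forward direction is immediate: reversing every edge of a directed cycle changes neither the out-degree nor the in-degree of any vertex. Each flip therefore preserves $d^+(v)$ for all $v$, and so does any sequence of flips. The substance is the converse. I will prove it by induction on the number of edges on which $D_1$ and $D_2$ disagree.

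Let $F\subseteq E(G)$ be the set of edges oriented differently in $D_1$ and $D_2$. If $F=\emptyset$, then $D_1=D_2$ and there is nothing to prove. Otherwise, consider the subdigraph $H$ of $D_1$ with edge set $F$, each edge keeping its $D_1$-orientation.

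The key claim is that $H$ is Eulerian as a digraph, i.e. $d_H^+(v)=d_H^-(v)$ for every $v$. To see this, fix a vertex $v$. Edges at $v$ outside $F$ contribute equally to $d_1^+(v)$ and $d_2^+(v)$. An edge of $F$ at $v$ that is directed out of $v$ in $D_1$ is directed into $v$ in $D_2$, and vice versa. Hence
\[d_2^+(v)-d_1^+(v)=d_H^-(v)-d_H^+(v),\]
and the hypothesis $d_1^+(v)=d_2^+(v)$ gives $d_H^+(v)=d_H^-(v)$.

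A nonempty digraph in which every vertex has equal in- and out-degree contains a directed cycle. To find one, start at any edge and keep walking along out-edges, which is always possible because a vertex entered also has an unused out-edge; since the graph is finite, some vertex eventually repeats. In the multigraph setting this cycle may be a 2-cycle formed by two parallel edges, or a loop. A loop has both ends at the same vertex, so its two orientations coincide and it never lies in $F$; I may therefore ignore loops.

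Let $C$ be a directed cycle in $H$, and let $D_1'$ be obtained from $D_1$ by reversing $C$. Then $D_1'$ is flip-adjacent to $D_1$, it still has $d^+(v)=d_2^+(v)$ for all $v$ by the forward direction, and it now agrees with $D_2$ on every edge of $C$. Its disagreement set is $F\setminus E(C)$, which is strictly smaller than $F$. By induction, $D_1'$ is flip-equivalent to $D_2$, and therefore so is $D_1$.

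There is no real obstacle here. The only point needing care is the degree bookkeeping showing that the disagreement subgraph $H$ is Eulerian; after that, extracting a directed cycle and inducting is routine.
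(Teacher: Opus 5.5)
Your proof is correct, and it takes a different and simpler route than the paper.

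The paper inducts on $\sum_{v}\max\{0,d_1^+(v)d_1^-(v)-1\}$. It handles the base case directly: there, every vertex is a source, a sink, or has $d_1^+(v)=d_1^-(v)=1$, so it deletes sources and sinks and reverses components that are directed cycles. Otherwise it takes a vertex with $d_1^+(v)\ge d_1^-(v)\ge 1$ and $d_1^+(v)d_1^-(v)\ge 2$, and uses a pigeonhole count to find two edges that are oppositely oriented at $v$ in both $D_1$ and $D_2$. It splits these off to a new vertex $v'$ and applies induction. It then lifts the flips back to $G$; a flipped cycle through both $v$ and $v'$ becomes a closed trail through $v$ twice, which must be split into two directed cycles.

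You bypass all of this by noting that the disagreement set, oriented as in $D_1$, is a balanced digraph. Hence it is a union of edge-disjoint directed cycles of $D_1$. This buys more than brevity. It shows that $D_2$ arises from $D_1$ by reversing pairwise edge-disjoint directed cycles of $D_1$, that is, by reversing a single Eulerian subdigraph of $D_1$. Each edge is reversed at most once, and at most $|F|/2$ flips suffice. The splitting-off argument does not directly give any such control over which edges are reversed or how often.

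The paper's reduce-and-lift approach gains nothing extra for this lemma. It is just closer in style to the contraction-and-lifting arguments used later in the paper.
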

\begin{proof}
	Reversing the edges of a directed cycle maintains $d^+(v)$ for all $v\in V(G)$, so the
	hypothesis is obviously necessary.  Now we show that it is sufficient.

	We prove that if $D_1$ and $D_2$ are orientations of the same multigraph with the same out-degrees (so also the same in-degrees), then $D_1$ and $D_2$ are flip-equivalent.
	For a multigraph $G$ and an orientation $D$, let 
	$f(G,D):= \sum_{v\in V(G)}\max\{0,d_D^+(v)\times d_D^-(v)-1\}$.
	We use induction on $f(G,D_1)$; 
    first suppose that $f(G,D_1)=0$.
	Now we use induction on $|V(G)|$.
	Consider a component $C$ of $G$. If some vertex $v$ of $C$ has $\min\{d^+(v),d^-(v)\}=0$,
	then assume by symmetry that $d^-(v)=0$; that is, $v$ is a source.
	So all edges incident to $v$ are oriented outward in both $D_1$ and $D_2$. Thus, we can delete $v$ and proceed by induction. Assume instead that $\min\{d^+(v),d^-(v)\}\ge 1$ for all $v\in V(C)$. Since $f(G,D_1)=0$, we know that $d_{1}^+(v)=d_{1}^-(v)=1$ for all $v\in V(C)$.
	So $C$ is a directed cycle. If $D_1$ and $D_2$ differ on $C$ then since $d_2^+(v)=d_2^-(v)=1$ for all $v\in V(C)$, we simply reverse all edges of $C$. Now we delete $V(C)$ and proceed by induction.  This handles the base case.

	Now we prove the induction step. Since $f(G,D_1)\ge 1$, there exists $v$ with $d^+_1(v)\times d^-_1(v)-1 \ge 1$. By symmetry, we assume that $d_1^+(v)\ge d_1^-(v)\ge 1$. (Thus, also $d_2^+(v)\ge d_2^-(v)\ge 1$.) Consider an edge $e$ oriented into $v$ in $D_1$. We will find an edge $e'$ also incident to $v$ that is oriented 
	oppositely (into/out of $v$) of $e$ in both $D_1$ and $D_2$.  The number of edges
	oriented oppositely of $e$ in $D_1$ is precisely $d_1^+(v)$. And the number of edges oriented oppositely of $e$ in $D_2$ is at least $\min\{d_2^+(v),d_2^-(v)\} = d_2^-(v)$. So the number of edges incident with $v$ that are oriented oppositely from $e$ in either $D_1$ or $D_2$, with multiplicity, is at least $d_1^+(v)+d_2^-(v) = d_1^+(v)+d_1^-(v)=d(v)$. But note that $e$ is not
    among these. Thus, by Pigeonhole Principle some edge $e'$ incident to $v$ is oriented oppositely of $e$ in both $D_1$ and $D_2$.
	Now we split $e$ and $e'$ off of $v$; call the new vertex $v'$ and the resulting graph $G'$. Let $D_1'$ and $D_2'$ be the orientations of $G'$ inherited from $G$.  

	Note that each vertex in $V(G')$ has the same in-degree, and same out-degree, in both $D_1'$ and $D_2'$.  But also, $f(G',D_1') < f(G,D_1)$, so by the induction hypothesis orientations $D_1'$ and $D_2'$ are flip-equivalent on $G'$. (This flip equivalence in $G'$ is witnessed by a sequence $C_1',\ldots,C_s'$ such that starting from $D_1'$ and successively reversing the edges of $C_1',\ldots,C_s'$ yields $D_2'$.)  
    We want to lift the flips in $G'$ (on the cycles $C_i'$) to flips in $G$; however, it is possible that a cycle in $G'$ might visit both $v$ and $v'$.  This subgraph lifts in $G$ not to a cycle, but to a connected subgraph with each vertex of degree $2$, except for $v$ which has degree $4$. In fact, the corresponding subdigraph has $d^+(w)=d^-(w)=1$ for all vertices $w$, except that $d^+(v)=d^-(v)=2$. But we can decompose this digraph into $2$ directed cycles, and reverse each of them.  This allows us to lift the flips from $G'$ to $G$, as desired, and thus proves the lemma.
\end{proof}

The following lemmas are well-known, but for completeness we include short proofs.

\begin{lem}[Flow Decomposition Theorem]\label{lem:FlowDecomposition}
	If $D$ is a digraph with $s,t\in V(D)$ such that $d^+_D(v) = d^-_D(v)$ for all $v\in V(D)\setminus\{s,t\}$ and $d^+_D(s)\ge d^-_D(s)$, then $D$ can be decomposed into
	directed cycles and $d^+_D(s)-d^-_D(s)$ directed $s,t$-paths.
\end{lem}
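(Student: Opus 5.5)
We argue by induction on $|E(D)|$; the base case $|E(D)|=0$ is trivial. Two observations hold throughout. First, $\sum_v (d^+_D(v)-d^-_D(v)) = 0$ and all non-terminal vertices are balanced, so $d^-_D(t)-d^+_D(t) = d^+_D(s)-d^-_D(s)$. Write $k:=d^+_D(s)-d^-_D(s)\ge 0$. Second, deleting the arcs of a directed cycle leaves every value $d^+(v)-d^-(v)$ unchanged, while deleting the arcs of a directed $s,t$-path lowers $d^+(s)-d^-(s)$ by exactly one and keeps all internal vertices balanced. (The case $s=t$ forces $k=0$ and falls under the first case below.)

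\emph{Case $k\ge 1$.} Here $s\ne t$, and we build a directed walk greedily. Start at $s$, which has an out-arc since $d^+(s)>d^-(s)\ge 0$, and repeatedly leave the current vertex along an unused out-arc until we reach $t$. The process cannot get stuck at any vertex $v\notin\{s,t\}$. Each time the walk enters such a $v$ it has used one more in-arc than out-arc there, and $d^+(v)=d^-(v)$, so an unused out-arc remains. The process also cannot get stuck at $s$: every return to $s$ uses an in-arc, and $d^+(s)>d^-(s)$, so an unused out-arc remains. Since $D$ is finite, the walk eventually reaches $t$. This yields a directed $s,t$-walk with no repeated arcs. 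We shortcut its closed subwalks to obtain a directed $s,t$-path $P$, and the shortcutting only removes arcs. Delete $E(P)$. The remaining digraph satisfies the hypothesis with $k-1$, so by induction it decomposes into directed cycles and $k-1$ directed $s,t$-paths. Adding $P$ gives the required decomposition.

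\emph{Case $k=0$.} Now every vertex is balanced, including $t$ by the first observation. If $D$ has no arcs we are done. Otherwise, start at the tail of any arc and walk along unused arcs. By balance, whenever we enter a vertex an unused out-arc remains, so the walk must eventually revisit a vertex. The first repetition yields a directed cycle $C$. Delete $E(C)$ and apply induction.

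The only delicate step is the greedy walk in the case $k\ge 1$: the walk must be forced to terminate at $t$, and not at $s$ or at some intermediate vertex. The in/out counting above handles this. Loops and parallel arcs cause no trouble, because a loop is itself a directed cycle and the counting arguments apply unchanged to multigraphs.
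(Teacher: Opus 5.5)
Your proof is correct, but it takes a different route from the paper's. The paper handles the unbalanced terminals in one step. It adds $k$ extra copies of the arc $(t,s)$, which makes the new digraph $D'$ Eulerian. It then decomposes $D'$ into directed cycles by repeatedly deleting a cycle. The $k$ directed $s,t$-paths are the cycles through the added arcs, with that arc removed; a directed cycle visits $t$ only once, so it contains at most one added arc.

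You instead peel off the $s,t$-paths one at a time, using a greedy trail from $s$ that you shortcut to a path. You fall back on cycle extraction only once $k=0$, and that case is exactly the paper's inductive step for Eulerian digraphs.

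The paper's trick is shorter. It needs no in/out counting at $s$ and the intermediate vertices, and no shortcutting of a trail to a path. Your argument is self-contained and constructs the paths directly. It also avoids the check, left implicit in the paper, that no cycle of the decomposition contains two added arcs.

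One small slip: when $s=t$ you correctly get $k=0$, but that falls under your \emph{second} case ($k=0$), not the first.
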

\begin{proof}
	Let $k:=d^+_D(s)-d^-_D(s)$, and form $D'$ from $D$ by adding $k$ copies of the directed {arc $(t,s)$}.
	Since $D'$ is Eulerian, it can be decomposed into directed cycles. (The proof is by induction
	on $|E(D')|$; we simply remove a directed cycle and apply the induction hypothesis.)  Now each of
	the $k$ cycles in the decomposition that contains one of the added arcs $(t,s)$ corresponds to a directed $s,t$-path in $D$.
\end{proof}

\begin{lem}[$k$-flow/$\Z_k$-flow Equivalence Theorem~\cite{tutte54}]
	\label{Zk-k-equiv-lem}
	Fix $k\in \Z^+$.  If a graph $G$ has a $\Z_k$-flow $f$, then $G$ also has a $k$-flow 
	$g$ such that $g(e)\equiv f(e)\pmod k$ for each $e\in E(G)$.
\end{lem}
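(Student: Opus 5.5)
Fix a $\Z_k$-flow $f$ on $G$ with a fixed orientation $D$, and regard each value $f(e)$ as an integer in $\{1,\ldots,k-1\}$. Viewed as an integer-valued function, $f$ has, at each vertex $v$, a net outflow that is a multiple of $k$, but it need not be $0$. The plan is to choose, among all integer-valued functions $g$ on $E(G)$ satisfying $g(e)\equiv f(e)\pmod k$ and $0<|g(e)|<k$, one that minimizes the total imbalance
\[
\Phi(g):=\sum_{v\in V(G)}\Bigl|\sum_{e\in E_D^+(v)}g(e)-\sum_{e\in E_D^-(v)}g(e)\Bigr|.
\]
It then suffices to show that $\Phi(g)=0$, since such a $g$ is the required $k$-flow. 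Each admissible $g$ has $g(e)\in\{f(e),f(e)-k\}$, so there are only finitely many candidates and a minimizer exists.

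Suppose that $\Phi(g)>0$. The total imbalance over all vertices sums to $0$, so there is a vertex $s$ with positive excess (more outflow than inflow). We will build an auxiliary digraph $H$ on $V(G)$ with two kinds of arcs:
\begin{itemize}
\item for each edge $e=uv$ directed $u\to v$ in $D$ with $g(e)>0$, the arc $u\to v$;
\item for each such edge with $g(e)<0$, the arc $v\to u$.
\end{itemize}
With this choice, every arc of $H$ points in the direction in which $|g(e)|$ units actually flow. Let $R$ be the set of vertices reachable from $s$ in $H$.

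If $R$ contains a vertex $t$ with negative excess, take a directed $s,t$-path $P$ in $H$. For each edge on $P$, replace $g(e)$ by $g(e)-k$ if $g(e)>0$, or by $g(e)+k$ if $g(e)<0$. Each new value is still nonzero, of absolute value less than $k$, and congruent to $f(e)$ modulo $k$. Along $P$ this reverses the direction of flow and reduces the excess of $s$ by $k$ and the deficit of $t$ by $k$, while internal vertices of $P$ are unchanged. The imbalances at $s$ and $t$ are multiples of $k$, so $|{\rm excess}(s)|$ drops by exactly $k$ and likewise at $t$; hence $\Phi$ strictly decreases, contradicting minimality.

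The main obstacle is the remaining case, in which no vertex of $R$ has negative excess. Then the total excess over $R$ is positive, since $s\in R$ has positive excess. On the other hand, no arc of $H$ leaves $R$, so every edge of $G$ between $R$ and $V\setminus R$ carries flow into $R$, and the net outflow of $g$ from $R$ is at most $0$. The net outflow from $R$ equals the sum of the excesses over $R$, which is positive, a contradiction. (In effect, this is the cut argument behind Lemma~\ref{lem:FlowDecomposition}.)

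Therefore $\Phi(g)=0$, and $g$ is a $k$-flow with $g\equiv f\pmod k$.
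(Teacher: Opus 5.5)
Your proof is correct and is the same augmenting-path argument the paper uses. You take representatives in $\{1,\ldots,k-1\}$, shift values by $\pm k$ along a path oriented in the direction of actual flow from a vertex of positive net outflow to one of negative net outflow, and thereby strictly decrease the total absolute imbalance. Your reachability/cut argument justifies a step the paper only asserts (``following out-edges with positive flow value \dots\ we can find a path''). Phrasing the argument through a minimizer of $\Phi$ rather than an iteration is an inessential difference.
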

\begin{proof}
	We assume that $f(e)\in\{1,\ldots,k-1\}$ for each edge $e$. Since $f$ is a $\Z_k$-flow, we have $\sum_{w\in N^+(v)}f(vw) - \sum_{u\in N^-(v)}f(uv) \equiv 0 \pmod k$ for each $v\in V(G)$.
	If $f$ is not also a $k$-flow, then there exists $x\in V(G)$ such that $\sum_{w\in N^+(x)}f(xw) - \sum_{u\in N^-(x)}f(ux) >0$; we call the ``net flow'' at $x$ positive.  
	Following out-edges with positive flow value (and possibly in-edges with negative flow value), we can find a path $P$ from $x$ to a vertex $y$ with net flow negative.  By subtracting $k$ from the flow value of each edge of $P$ (or adding $k$ to in-edges with negative flow), we can decrease the sum, over all vertices of $G$, of the absolute values of the net flows; note that the net flow only changes at $x$ and $y$. By repeating this process, we reach a flow where the sum is $0$, which is thus a $k$-flow.
\end{proof}

For each positive integer $k$, a \emph{$\Z_{2k+1}$-orientation} is an orientation $D$ of $G$ such that 
$d_D^+(v) - d_D^-(v) \equiv 0 \pmod{2k+1}$
for every vertex $v\in V(G)$.
Here $d_D^+(v)$ and $d_D^-(v)$ denote the out-degree and in-degree of $v$.
The \emph{$\Z_{2k+1}$-orientation reconfiguration graph} $\O(G,\Z_{2k+1})$ has  as its vertices all $\Z_{2k+1}$-orientations on $G$, and two orientations $D_1,D_2$ are adjacent if $D_2$ can be obtained from $D_1$ by reversing the edges of a directed cycle.

We are now ready to characterize $\Z_3$-flow-connected graphs.  
Before giving the proof, we make a simple observation, and state a useful lemma.

\begin{obs}\label{Z3-obs}
	For any graph $G$, the $\Z_3$-orientation reconfiguration graph $\O(G,\Z_3)$ is isomorphic to the $\Z_3$-flow reconfiguration graph $\F(G, \Z_3)$. 
	Thus, a graph $G$ is $\Z_3$-flow-connected if and only if $G$ is $\Z_3$-orientation-connected.
\end{obs}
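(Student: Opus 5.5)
We will exhibit an explicit isomorphism $\Phi\colon \O(G,\Z_3)\to\F(G,\Z_3)$. Fix once and for all the reference orientation $D_0$ of $G$ that the paper uses for all flows. Given a $\Z_3$-orientation $D$, define the $\Z_3$-flow $\Phi(D)$ on $D_0$ by $\Phi(D)(e)=1$ if $D(e)=D_0(e)$ and $\Phi(D)(e)=2=-1$ otherwise. Equivalently, $\Phi(D)$ is the same flow as $(D,\mathbf 1)$, the constant function $1$ on $D$.

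\textbf{Well-defined.} The net flow of $(D,\mathbf 1)$ at $v$ is $d_D^+(v)-d_D^-(v)\equiv 0\pmod 3$. So $\Phi(D)$ is a nowhere-zero $\Z_3$-flow.

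\textbf{Bijective.} Injectivity holds because $D$ is recovered from $\Phi(D)$ edge by edge. For surjectivity, take any nowhere-zero $\Z_3$-flow $f$ and orient each edge $e$ as in $D_0$ if $f(e)=1$, and reversed if $f(e)=2$. This is exactly the normalization remarked before the Definition. The resulting $D$ has net flow $d^+_D(v)-d^-_D(v)\equiv0$, so it is a $\Z_3$-orientation, and $\Phi(D)=f$.

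\textbf{Adjacency is preserved.} This is the only step needing care. Suppose $D'$ is obtained from $D$ by reversing a directed cycle $C$. Then $\Phi(D)$ and $\Phi(D')$ agree off $E(C)$, so they are flow-adjacent.

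Conversely, suppose $\Phi(D)$ and $\Phi(D')$ are distinct and agree off some cycle $C$. Let $g$ be the difference, written in the orientation $D$. It is a $\Z_3$-flow (possibly with zeros) supported on $E(C)$. Any $\Z_3$-flow, zeros allowed, on a cycle is a constant $c$ around the cycle with respect to a cyclic traversal. Since the flows differ, $c\neq0$.

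In orientation $D$ both flows take values in $\{1,2\}$ on each edge, with $\Phi(D)\equiv1$. So on each edge of $C$, $g(e)\in\{0,1\}$ in $D$-direction. Constancy of $g$ with $c\neq0$ along the traversal then forces every edge of $C$ to have $g(e)=1$ in $D$-direction.

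For this to hold, all edges of $C$ must be traversed consistently with $D$; otherwise some edge would need the value $-c$ and others $c$ in $D$-direction, impossible within $\{0,1\}\setminus\{0\}$. Hence $C$ is a directed cycle in $D$, and $\Phi(D')$ equals $2$ in $D$-direction on $C$. That is, $D'$ is $D$ with $C$ reversed.

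Therefore $\Phi$ is a graph isomorphism, and connectedness of one reconfiguration graph is equivalent to connectedness of the other. This also covers the degenerate empty case mentioned in the footnote.
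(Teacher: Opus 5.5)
Your proof is correct and follows the paper's approach: the same bijection that normalizes a $\Z_3$-flow to the constant value $1$ by reversing every edge of value $2$. Your explicit check that two flows differing on a cycle $C$ force $C$ to be a directed cycle of $D$, so that flow-adjacency corresponds exactly to cycle reversal, supplies a step the paper leaves implicit. The paper relies only on its earlier remark that treats directed cycles.
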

\begin{proof}
Each $\Z_3$-flow is the same as a $\Z_3$-orientation by reversing each edge with flow value $2$. 
And each $\Z_3$-orientation can be viewed as a $\Z_3$-flow (with flow value $1$ everywhere). 
\end{proof}

\begin{lem}[\cite{YOH-MLV}]\label{bip-paths-lem}
If $G$ is a $d$-regular bipartite graph (possibly with multiple edges), then for each $w\in V(G)$ there exists a vertex $x\in V(G)$ in the other part of the bipartition such that $G$ contains $d$ edge-disjoint $w,x$-paths.
\end{lem}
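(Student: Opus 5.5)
The plan is to go through perfect matchings and König's theorem, then extract the paths from a maximum flow (equivalently, an orientation) argument. The lemma is a known result attributed to \cite{YOH-MLV}.

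\textbf{Setup.} Let $G$ have bipartition $(U,W)$ with $w\in U$; the statement is vacuous for $d=0$, so assume $d\ge 1$. Each component of $G$ is again $d$-regular bipartite, so we may assume $G$ is connected. By Menger's theorem, it suffices to find a vertex $x\in W$ such that every edge cut separating $w$ from $x$ has size at least $d$.

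\textbf{Cut structure.} Consider any $S\subseteq V(G)$ with $w\in S$ and $S\ne V(G)$. Double-count the edges leaving $S$ using the bipartition:
\[
|\partial S| = d|S\cap U| - d|S\cap W| + 2e(S\cap W,\,U\setminus S),
\]
and symmetrically with the roles of $U$ and $W$ exchanged. In particular, $|\partial S|\equiv d(|S\cap U|-|S\cap W|) \pmod 2$. Every cut is a multiple of $d$ plus a correction term. A cut with $|\partial S|<d$ therefore forces $S$ to be nearly balanced; specifically, $|S\cap U|-|S\cap W|\in\{0\}$ in the relevant range.

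\textbf{Choosing $x$.} The key step, and the main obstacle, is choosing $x\in W$ so that no cut of size less than $d$ separates $w$ from $x$. I would take all the "small" sets $S\ni w$ with $|\partial S|<d$ and show that they form a lattice under $\cap$ and $\cup$, by submodularity combined with the counting identity above. Let $S^*$ be the union of all of them; it is still small, and it is tight. Then pick any $x\in W\setminus S^*$. This is nonempty because the complement of a small set must contain $W$-vertices: a set with $|S\cap U|=|S\cap W|$ and small boundary has a complement that is nonempty and balanced.

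\textbf{Fallback if the lattice claim fails.} I would use induction via the Flip-Equivalence Lemma framework instead. Orient $G$ using a decomposition into $d$ perfect matchings (König), with $d-1$ matchings oriented $W\to U$ and one oriented $U\to W$; this is adjusted so that $w$ has out-degree $d$ in a suitable auxiliary orientation. Then take $x$ to be a vertex reachable from $w$ in the residual digraph where the imbalance ends, and apply Lemma~\ref{lem:FlowDecomposition}. Lemma~\ref{lem:FlowDecomposition} yields $d$ edge-disjoint directed $w,x$-paths as long as all vertices other than $w$ and $x$ are balanced.

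The real difficulty is constructing such an orientation. In it, $w$ must have excess $d$, a single $x\in W$ must have deficit $d$, and every other vertex must be balanced; this requires even degrees, so parity splits into cases. If $d$ is even, use an Eulerian orientation of $G$ after deleting a $w,x$ "$d$-fold" structure. If $d$ is odd, first remove a perfect matching to reduce to the even case. I expect this parity handling to be the hardest part of the argument.
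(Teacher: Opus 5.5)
Your counting identity is correct, and so is its consequence that every set $S$ with $|\partial S|<d$ is balanced, i.e.\ $|S\cap U|=|S\cap W|$. That is the right key fact, but the step that chooses $x$ fails in two ways. First, it is backwards. If $S\ni w$ satisfies $|\partial S|<d$ and $x\notin S$, then $\partial S$ is a cut of fewer than $d$ edges separating $w$ from $x$. Choosing $x\in W\setminus S^*$ therefore guarantees that \emph{every} small set separates $x$ from $w$, so the choice fails as soon as one small set exists. For instance, join two copies $H_1,H_2$ of $K_{3,3}$ minus an edge by two edges so that the result is $3$-regular bipartite, and take $w\in V(H_1)$. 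Then $S^*\supseteq V(H_1)$, so your $x$ lies in $H_2$, where $\lambda(w,x)\le 2$. Second, the lattice claim is unjustified. Submodularity only gives $|\partial(S\cup T)|+|\partial(S\cap T)|<2d$, so either term alone may be at least $d$. The union can even be all of $V(G)$. Take a chain of three blocks: the middle one is $K_{3,3}$ minus a $2$-edge matching and contains $w$, the outer ones are $K_{3,3}$ minus an edge, and consecutive blocks are joined by two edges. The middle block together with either outer block is a small set, and these two sets cover $V(G)$, so $W\setminus S^*=\emptyset$. Switching to intersections fixes the direction, but closure under intersection does not follow from submodularity either.

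The fallback is circular. An orientation of a subgraph in which $w$ has excess $d$, $x$ has deficit $d$, and every other vertex is balanced exists exactly when $d$ edge-disjoint $w,x$-paths exist, so constructing it is the whole problem. ``Deleting a $w,x$ $d$-fold structure'' presupposes those paths, and removing a perfect matching when $d$ is odd leaves you one path short with no way to recover it.

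What you actually need is a laminar family of small cuts, which a Gomory--Hu tree provides. Delete from a Gomory--Hu tree of $G$ every edge of weight less than $d$, and let $K$ be the vertex set of the component containing $w$; every $x\in K$ satisfies $\lambda(w,x)\ge d$. The set $V(G)\setminus K$ is partitioned into the far sides $Y_e$ of the deleted tree edges $e$ with an endpoint in $K$. Each $Y_e$ is a shore of a cut of size less than $d$, hence balanced by your identity. Since $|U|=|W|$, the set $K$ is balanced as well. Because $w\in K\cap U$, it follows that $K$ contains some $x\in W$, and Menger's theorem gives the $d$ paths. (The paper itself gives no proof here; it only cites the result.)
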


The main motivation for our next result is the case $k=1$.  But the proof works equally well when $k$
is any positive integer.  So we present it in full generality.

\begin{theorem}\label{mod-ori-conn-thm}
Let $k$ be a positive integer and let $G$ be a graph that admits a $\Z_{2k+1}$-orientation. Now $G$ is $\Z_{2k+1}$-orientation-connected if and only if $G$ is Eulerian and has no pair of vertices $w,x$ such that $G$ contains $4k+2$ edge-disjoint $w,x$-paths.
\end{theorem}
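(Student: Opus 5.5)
By the Flip-Equivalence Lemma (Lemma~\ref{flip-equiv-lem}), two $\Z_{2k+1}$-orientations are joined in $\O(G,\Z_{2k+1})$ exactly when they have the same out-degree at every vertex. So the plan is to show that $G$ is $\Z_{2k+1}$-orientation-connected if and only if all $\Z_{2k+1}$-orientations of $G$ share one out-degree vector. I then treat the two directions separately. Throughout I write $\mathrm{ex}_D(v):=d_D^+(v)-d_D^-(v)$.

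\emph{Necessity.} Suppose first that $G$ is not Eulerian, and let $D$ be a $\Z_{2k+1}$-orientation of $G$. Its reverse $\overline{D}$ is again a $\Z_{2k+1}$-orientation, since every excess is negated. At a vertex $v$ of odd degree we have $d_D^+(v)\ne d_D^-(v)=d_{\overline D}^+(v)$, so $D$ and $\overline D$ are not flip-equivalent.

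Now suppose $G$ is Eulerian and has $4k+2$ edge-disjoint $w,x$-paths $P_1,\ldots,P_{4k+2}$. In the union of these paths every vertex has even degree, so $G-\bigcup E(P_i)$ is Eulerian. Give it an Eulerian orientation, orient $2k+1$ of the paths from $w$ to $x$, and orient the other $2k+1$ from $x$ to $w$. The result is an Eulerian orientation $E$ of $G$, which is a $\Z_{2k+1}$-orientation. Reversing the $2k+1$ paths directed from $x$ to $w$ gives an orientation $E'$ with $\mathrm{ex}_{E'}(w)=4k+2$, $\mathrm{ex}_{E'}(x)=-(4k+2)$, and excess $0$ elsewhere. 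So $E'$ is also a $\Z_{2k+1}$-orientation, but its out-degree vector differs from that of $E$. Hence $G$ is not orientation-connected.

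\emph{Sufficiency.} Let $G$ be Eulerian. For every orientation $D$, each $\mathrm{ex}_D(v)=2d_D^+(v)-d(v)$ is even. If $D$ is a $\Z_{2k+1}$-orientation, each excess is also divisible by the odd number $2k+1$, hence it is a multiple of $4k+2$. All Eulerian orientations have out-degree $d(v)/2$ everywhere. So if $G$ fails to be orientation-connected, some $\Z_{2k+1}$-orientation $D$ is not Eulerian. I will show that such a $D$ forces a pair $w,x$ joined by $4k+2$ edge-disjoint paths, which proves the contrapositive.

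Write $\mathrm{ex}_D(v)=c_v(4k+2)$ with $c_v\in\Z$. Add a super-source joined to each vertex $v$ with $c_v>0$ by $c_v(4k+2)$ arcs, and a super-sink receiving $|c_v|(4k+2)$ arcs from each $v$ with $c_v<0$. The Flow Decomposition Theorem (Lemma~\ref{lem:FlowDecomposition}) then decomposes $D$ into directed cycles together with arc-disjoint directed paths. Each source $v$ starts exactly $c_v(4k+2)$ of these paths, and each sink $u$ ends exactly $|c_u|(4k+2)$ of them.

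Build a bipartite multigraph $B$. On one side, split each source $v$ into $c_v$ copies; on the other side, split each sink $u$ into $|c_u|$ copies. Distribute the paths so that every copy is incident with exactly $4k+2$ of them, and make each path an edge of $B$ between the corresponding copies. Then $B$ is a $(4k+2)$-regular bipartite multigraph.

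Lemma~\ref{bip-paths-lem} gives two copies $w',x'$, on opposite sides, joined by $4k+2$ edge-disjoint paths in $B$. Replace each edge of $B$ by its path in $G$ and identify copies with their original vertices. Because the paths in $G$ are pairwise edge-disjoint, this yields $4k+2$ edge-disjoint $w,x$-trails in $G$ with $w\ne x$. Shortcutting each trail to a path produces the required $4k+2$ edge-disjoint $w,x$-paths.

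The step I expect to need the most care is this last lifting. One must check that a path in $B$, which alternates between source-copies and sink-copies, becomes a genuine $w,x$-trail in $G$ after undirecting the flow paths. One must also confirm that $w$ and $x$ are distinct vertices of $G$: this holds because $w'$ and $x'$ lie on opposite sides of $B$, so one is a copy of a source and the other a copy of a sink, and no vertex is both.
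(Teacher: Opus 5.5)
Your proposal is correct and follows essentially the same route as the paper: reversal of $D$ at an odd-degree vertex, then edge-disjoint $w,x$-paths used to build two $\Z_{2k+1}$-orientations with different out-degree at $w$, and, for sufficiency, decomposition of a non-Eulerian $\Z_{2k+1}$-orientation into source--sink paths, splitting into a $(4k+2)$-regular bipartite multigraph, and Lemma~\ref{bip-paths-lem}. Your super-source/super-sink device is a slightly more careful way of applying the single-pair Lemma~\ref{lem:FlowDecomposition} to many sources and sinks, which the paper does implicitly; the choice of Eulerian versus all-$w\to x$ orientations in the necessity part is only a cosmetic variant of the paper's $D_0\cup D_H$ versus $D_0\cup D_H^R$.
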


\begin{proof}
We first prove necessity. Assume $G$ is $\Z_{2k+1}$-orientation-connected.
Suppose there exists $v \in V(G)$ with odd degree. Let $D$ be a $\Z_{2k+1}$-orientation of $G$, and form $D^R$ from $D$ by reversing every edge. So $D^R$ is also a $\Z_{2k+1}$-orientation. 
Moreover, $d_{D^R}^+(v)=d_D^-(v)$ and $d_D^-(v)\neq d_D^+(v)$ because $d(v)$ is odd. By Lemma~\ref{flip-equiv-lem}, orientations $D$ and $D^R$ are not flip-equivalent, contradicting the assumption. Hence, $G$ is Eulerian.

Now suppose that $G$ is Eulerian and $G$ has vertices $w,x \in V(G)$ such that $G$ has $4k+2$ edge-disjoint $w,x$-paths. Let $H$ be the union of these paths. Since $G$ is Eulerian, the graph $G - E(H)$ is also Eulerian; let $D_0$ be an Eulerian orientation of $G - E(H)$. Orient all paths in $H$ from $w$ to $x$, and denote the resulting orientation by $D_H$. 
Then 
$d_{D_H}^+(w)=4k+2=d_{D_H}^-(x)$ and
$d_{D_H}^-(w)=0=d_{D_H}^+(x)$, and for every other vertex $v \in V(H)$ we have $d_{D_H}^+(v)=d_{D_H}^-(v)$. Let $D_1 := D_0 \cup D_H$ and let $D_2 := D_0 \cup D_H^R$. 
Note that $D_1$ and $D_2$ are both $\Z_{2k+1}$-orientations of $G$. However, $d_{D_1}^+(w)=4k+2 > 0=d_{D_2}^+(w)$, and hence $D_1$ and $D_2$ are not flip-equivalent by Lemma~\ref{flip-equiv-lem}, a contradiction.

We now prove the sufficiency. Assume that $G$ is Eulerian and that $G$ has no such pair of vertices $w,x$ as above. Let $D_1$ be an Eulerian orientation of $G$. Suppose, for a contradiction, that some $\Z_{2k+1}$-orientation $D_2$ of $G$ is not flip-equivalent to $D_1$. By Lemma~\ref{flip-equiv-lem}, there exists a vertex $v$ such that $d_{1}^+(v)\neq d_{2}^+(v)$. Since $D_1$ is Eulerian, we have $d_{1}^+(v)=d_{1}^-(v)$, and hence $d_{2}^+(v)\neq d_{2}^-(v)$.
Let $U:=\{u\in V(G): d_{2}^+(u)>d_{2}^-(u)\}$ and let $W:=\{w\in V(G): d_{2}^-(w)>d_{2}^+(w)\}$. 
Note that $U\neq \emptyset$ and $W\neq \emptyset$.

Since $D_2$ is a $\Z_{2k+1}$-orientation, for every $v\in V(G)$ we have $d_{2}^+(v)-d_{2}^-(v)\equiv 0 \pmod{2k+1}$. Moreover, since $G$ is Eulerian, $d_G(v)$ is even for all $v$, and hence $d_{2}^+(v)-d_{2}^-(v)\equiv d_G(v)\equiv 0 \pmod{2}$. It follows that $d_{2}^+(v)-d_{2}^-(v)\equiv 0 \pmod{4k+2}$. Therefore, for each $v\in V(G)$ we have $d_{2}^+(v)-d_{2}^-(v)=(4p+2)t$ for some $t\in \mathbb{Z}$, 
and in particular $t\neq 0$ for each $v\in U\cup W$.

By the Flow Decomposition Theorem (Lemma~\ref{lem:FlowDecomposition}), the directed edges of $D_2$ can be decomposed into a disjoint union of directed $U,W$-paths, which we call $\mathcal{P}$, and directed cycles such that each $u\in U$ is the source of exactly $d_{2}^+(u)-d_{2}^-(u)$ paths and each $w\in W$ is the sink of exactly $d_{2}^-(w)-d_{2}^+(w)$ paths.

We construct an auxiliary bipartite multigraph $B_0$ with bipartition $(U,W)$, where each path in $\mathcal{P}$ 
from $u$ to $w$ corresponds to an edge $uw$. Now $d_{B_0}(x)=|d_{D_2}^+(x)-d_{D_2}^-(x)|$ for each $x\in U\cup W$; hence, each vertex of $B_0$ has degree divisible by $4k+2$.  Form $B$ from $B_0$ by splitting each $v$ with $d_{B_0}(v)>4k+2$ into $d_{B_0}(v)/(4k+2)$ vertices, each of degree $4k+2$.
Note that $B$ is bipartite and $(4k+2)$-regular. By Lemma~\ref{bip-paths-lem}, there exist vertices $w,x\in V(B)$ such that $B$ contains $4k+2$ edge-disjoint $w,x$-paths. These paths correspond to $4k+2$ edge-disjoint $w,x$ walks in $G$, which can be reduced to $4k+2$ edge-disjoint $w,x$-paths in $G$; this contradicts our hypothesis. 
\end{proof}

When $k=1$, the previous result characterizes graphs that are $\Z_3$-flow-connected. What remains to do in this section is to show that precisely the same graphs are $3$-flow-connected. Again, our arguments work in more generality. We recall a few types of flows, and for each we define a corresponding reconfiguration graph.

Fix integers $p,q$ with $p \ge 2q>0$. A \emph{circular $(p,q)$-flow} on a graph $G$ is a flow  $f:E(G)\to \pm\{q,q+1,\dots,p-q\}$. 
A \emph{modular $(p,q)$-flow} is a flow $f:E(G)\to \{q,q+1,\dots,p-q\}\subseteq \mathbb{Z}_p$.  

\begin{itemize}
\item The \emph{circular $(p,q)$-flow reconfiguration graph} $\mathcal{F}_{\mathrm{cir}}(G;p,q)$ has all circular $(p,q)$-flows as vertices, and two flows $f_1,f_2$ (with the same underlying orientation) are adjacent if their difference $f_1 - f_2$ is supported on a cycle of $G$.  

\item The \emph{modular $(p,q)$-flow reconfiguration graph} $\mathcal{F}_{\mathrm{mod}}(G;p,q)$ is defined analogously for modular $(p,q)$-flows.
\end{itemize}

Each circular $(k,1)$-flow has flow values $\{\pm 1,\pm 2, \ldots, \pm (k-1)\}$; so, it is precisely a $k$-flow. Using the characterization in Theorem~\ref{mod-ori-conn-thm}, we get the following equivalence.

\begin{theorem}
\label{thm:mod-k-orientation}
Let $k$ be a positive integer and let $G$ be a graph that admits a circular $(2k+1,k)$-flow. The following $4$ statements are equivalent:
\begin{enumerate}[label=(\roman*)]
    \item\label{Equiv1} The circular $(2k+1,k)$-flow reconfiguration graph $\mathcal{F}_{\mathrm{cir}}(G;2k+1,k)$ is connected.
    \item\label{Equiv2} The modular $(2k+1,k)$-flow reconfiguration graph $\mathcal{F}_{\mathrm{mod}}(G;2k+1,k)$ is connected.
    \item\label{Equiv3} The $\Z_{2k+1}$-orientation reconfiguration graph $\O(G,\Z_{2k+1})$ is connected.
    \item\label{Equiv4} $G$ is Eulerian and, for each pair $w,x$ of distinct vertices, $G$ contains at most $4k$ edge-disjoint $w,x$-paths.
\end{enumerate}
\end{theorem}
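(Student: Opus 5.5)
The plan is to show \ref{Equiv3}$\Leftrightarrow$\ref{Equiv4} from Theorem~\ref{mod-ori-conn-thm}, then \ref{Equiv2}$\Leftrightarrow$\ref{Equiv3} by an isomorphism as in Observation~\ref{Z3-obs}, and finally \ref{Equiv1}$\Leftrightarrow$\ref{Equiv3} by analysing what a single cycle move does to a circular flow, using the Flip-Equivalence Lemma~\ref{flip-equiv-lem}.

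\textbf{\ref{Equiv3}$\Leftrightarrow$\ref{Equiv4}.} This is Theorem~\ref{mod-ori-conn-thm} plus a parity remark. Condition \ref{Equiv4} and the condition in that theorem both require $G$ to be Eulerian. In an Eulerian graph every edge cut has even size. If $G$ had $4k+1$ edge-disjoint $w,x$-paths, then by Menger every $w,x$-cut would have at least $4k+1$ edges, hence at least $4k+2$ edges by parity. A second application of Menger would then give $4k+2$ edge-disjoint $w,x$-paths. So ``at most $4k$'' is equivalent to ``never $4k+2$''. Also, a circular $(2k+1,k)$-flow reduces mod $2k+1$ to a $\Z_{2k+1}$-orientation (see below), so the hypothesis of Theorem~\ref{mod-ori-conn-thm} holds.

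\textbf{\ref{Equiv2}$\Leftrightarrow$\ref{Equiv3}.} A modular $(2k+1,k)$-flow takes only the values $\pm k$ in $\Z_{2k+1}$. Reverse the edges with value $k+1=-k$, so that every edge carries $k$. Conservation then reads $k(d^+(v)-d^-(v))\equiv 0$, and since $\gcd(k,2k+1)=1$ this is exactly a $\Z_{2k+1}$-orientation. The correspondence is a bijection.

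For adjacency, suppose two such flows differ by a flow supported on a cycle $C$. A $\Z_{2k+1}$-flow supported in a cycle is a constant multiple of the cycle's circulation. Hence either nothing changes, or every edge of $C$ changes from $k$ to $-k$, i.e.\ is reversed. In the latter case the change is $-2k\equiv 1$ times the circulation in the direction of the edges, so $C$ must be directed. Thus cycle moves correspond exactly to reversing directed cycles, and $\mathcal F_{\mathrm{mod}}(G;2k+1,k)\cong\O(G,\Z_{2k+1})$.

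\textbf{\ref{Equiv1}$\Leftrightarrow$\ref{Equiv3}.} To a circular flow $f$ (values in $\pm\{k,k+1\}$) associate the orientation $D_f$ in which $f>0$. By the argument above, $D_f$ is a $\Z_{2k+1}$-orientation. A cycle move adds $t$ times a cycle circulation, with $t\in\mathbb Z\setminus\{0\}$. On each edge of the cycle this move either keeps the sign of $f$, or flips it, which forces $|t|=2k+1$ on every edge of the cycle. In the sign-flip case the cycle must be directed in $D_f$. So each move either fixes $D_f$ or reverses a directed cycle of $D_f$, and by Lemma~\ref{flip-equiv-lem} the out-degrees of $D_f$ are invariant.

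\emph{Direction \ref{Equiv1}$\Rightarrow$\ref{Equiv3}.} Suppose $G$ has a vertex of odd degree. Then $f$ and $-f$ give orientations $D_f$ and $D_f^R$ with different out-degrees at that vertex, so they lie in different components of $\mathcal F_{\mathrm{cir}}$. Suppose instead $G$ is Eulerian and has $4k+2$ edge-disjoint $w,x$-paths. I would take the two orientations $D_1,D_2$ from the proof of Theorem~\ref{mod-ori-conn-thm} and realise each as some $D_f$. The delicate point is that $D_1$ and $D_2$ are not arbitrary: on the Eulerian part $G-E(H)$ we can put constant value $k$. On the paths we need a genuine integer flow with net outflow $0$. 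Putting value $k$ on $2k+1$ of the paths and $k+1$ on the other $2k+1$ is not enough by itself, because a path carries its value from $w$ to $x$ and produces net outflow at $w$. I expect this realisability step to be the main obstacle. One fix is to choose $D_0$ differently: route a compensating integer flow from $x$ back to $w$ inside the Eulerian remainder. Alternatively, I would simply prove \ref{Equiv1}$\Rightarrow$\ref{Equiv4} via the same invariant, after first showing a Hoffman-type realisability statement, namely that every $\Z_{2k+1}$-orientation $D$ of an Eulerian graph equals $D_f$ for some circular flow $f$.

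\emph{Direction \ref{Equiv3}$\Rightarrow$\ref{Equiv1}.} This direction is cleaner. First, any directed cycle of $D_f$ can be reversed by adding $-(2k+1)$ along it: a value $k+1$ becomes $-k$, and a value $k$ becomes $-(k+1)$. Both are legal, so flips lift to moves in $\mathcal F_{\mathrm{cir}}$. Given two circular flows $f,g$, the orientations $D_f$ and $D_g$ are flip-equivalent because all $\Z_{2k+1}$-orientations are in one component and they have the same out-degrees. Lift those flips to move $f$ to a flow $f'$ with $D_{f'}=D_g$. Now $f'-g$ is an integer flow with values in $\{-1,0,1\}$ on $D_g$. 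I would decompose it into cycles, each directed consistently with the sign of the difference (Lemma~\ref{lem:FlowDecomposition}), and add $\pm1$ along them one at a time. Each step keeps all values in $\{k,k+1\}$, because an edge receiving $+1$ has $f'=k$ and an edge receiving $-1$ has $f'=k+1$. This connects $f$ to $g$ in $\mathcal F_{\mathrm{cir}}(G;2k+1,k)$.
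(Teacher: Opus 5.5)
You handle (iii)$\Leftrightarrow$(iv) and (ii)$\Leftrightarrow$(iii) correctly. Your invariant for $\mathcal F_{\mathrm{cir}}$ is also right. A move with $|t|=1$ keeps every sign. A sign flip forces $|t|\in\{2k,2k+1,2k+2\}$ (not necessarily $2k+1$) on the whole cycle, which must then be a directed cycle of $D_f$. So the out-degrees of $D_f$ are preserved.

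The gap is the claim ``$D_f$ is a $\Z_{2k+1}$-orientation'', which is false in general. Reducing mod $2k+1$ sends $k+1$ to $-k$. So the $\Z_{2k+1}$-orientation attached to $f$ is not $D_f$ but $D'_f$, obtained from $D_f$ by reversing the edges of value $k+1$. Let $a(v)$ and $b(v)$ be the net out-degrees at $v$ in $D_f$ of the value-$k$ and value-$(k+1)$ edges. Conservation $ka+(k+1)b=0$ forces $a=(k+1)m$ and $b=-km$. Hence $D_f$ has excess $m(v)$, while $D'_f$ has excess $(2k+1)m(v)$. For example, take $k=1$ and six parallel $uv$-edges, with value $2$ on two edges $u\to v$ and value $1$ on four edges $v\to u$; then $D_f$ has excess $-2$ at $u$.

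This breaks (iii)$\Rightarrow$(i): you cannot read off ``$D_f$ and $D_g$ are flip-equivalent'' from (iii) alone. You need the step that carries the paper's argument. Under (iii), $D'_f$ is flip-equivalent to an Eulerian orientation, hence Eulerian by Lemma~\ref{flip-equiv-lem}. So $m=0$ everywhere and $D_f$ is Eulerian. With this inserted, your flip-lifting and $\pm1$ correction work, and amount to the paper's normalisation to the all-$k$ flow on an Eulerian orientation.

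The direction (i)$\Rightarrow$(iii) is left unproved, and your proposed ``Hoffman-type'' fix is false. In the same six-edge graph, orienting all edges $u\to v$ gives a $\Z_3$-orientation that equals $D_f$ for no flow $f$, since all positive values would leave $u$. In general, $D=D_f$ with excess $(2k+1)t$ at $v$ needs $d(v)\ge (2k+1)^2|t|$.

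The missing idea is again to use $D'_f$ rather than $D_f$. The paper's route is shortest: reduction mod $2k+1$ sends adjacent circular flows to equal or adjacent modular flows, and it is surjective by Lemma~\ref{Zk-k-equiv-lem}. So (i)$\Rightarrow$(ii) is immediate.

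If you want to keep your invariant, there are two repairs. First, when (iv) fails, Theorem~\ref{mod-ori-conn-thm} gives a non-Eulerian $\Z_{2k+1}$-orientation. Lift it via Lemma~\ref{Zk-k-equiv-lem} to a circular flow $f$ with $D'_f$ equal to it. Then $D_f$ has nonzero excess $m$ somewhere, which separates $f$ from the all-$k$ flow on an Eulerian orientation. Second, you can build $f$ directly on the $4k+2$ paths: put value $k$ on $2k+2$ of them oriented $w\to x$, and value $k+1$ on the other $2k$ oriented $x\to w$. This balances at $w$ and $x$ and gives $D_f$ excess $2$ at $w$.
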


\begin{proof}
The equivalence between \ref{Equiv3} and \ref{Equiv4} is exactly Theorem~\ref{mod-ori-conn-thm}. (By Observation~\ref{eulerian-obs},
no Eulerian graph has an odd edge-cut. Thus, by Menger's Theorem, the maximum number of edge-disjoint paths between every $2$ vertices is even.) And the equivalence between \ref{Equiv2} and \ref{Equiv3} is easy, so we just sketch the proof. In a modular $(2k+1,k)$-flow, each flow value is $k$ or $k+1$; multiplying each flow value by $2$ gives a flow with each value in $\{-1,1\}$, and reversing each edge with flow value $-1$ gives a $\Z_{2k+1}$-orientation. But this transformation also works in reverse: starting from a $\Z_{2k+1}$-orientation and giving each edge flow value $k$ yields a modular $(2k+1,k)$-flow. It is easy to check that a walk between 2 flows of one type ``lifts'' to a walk between 2 flows of the other.

We now show that \ref{Equiv1} implies \ref{Equiv2}. Assume that $G$ is circular $(2k+1,k)$-flow-connected. Let \(\rho\) be a mapping from \(V(\mathcal{F}_{\mathrm{cir}}(G;2k+1,k))\) to $V(\mathcal{F}_{\mathrm{mod}}(G;2k+1,k))$ such that \(\rho(f)(e)\equiv f(e)\pmod{2k+1}\) for each \(e\in E(G)\).
Note that \(\rho\) is well-defined. By Lemma~\ref{Zk-k-equiv-lem}, $\rho$ is surjective. 
For two adjacent flows $(D,f_1)$, $(D,f_2)\in V(\mathcal{F}_{\mathrm{cir}}(G;2k+1,k))$, one can see that either $\rho(f_1)=\rho(f_2)$, or $\text{supp}(\rho(f_1)-\rho(f_2))$ is a cycle. This means, either they are mapped into one vertex, or their images are adjacent in $\mathcal{F}_{\mathrm{mod}}(G;2k+1,k)$. Since \(\rho\) is surjective, our proof is complete. 

It remains to prove that \ref{Equiv2} implies \ref{Equiv1}. Assume that $G$ is modular $(2k+1,k)$-flow-connected. By the equivalence of \ref{Equiv2}, \ref{Equiv3}, and \ref{Equiv4}, it follows that $G$ is $\Z_{2k+1}$-orientation-connected and $G$ is Eulerian. 

Fix an Eulerian orientation $D_0$ of $G$, and let $f_0$ be the circular $(2k+1,k)$-flow formed from 
$D_0$ by assigning every edge flow value $k$. Let $(D,f)$ be an arbitrary circular $(2k+1,k)$-flow on $G$. We show that $(D,f)$ is circular-$(2k+1,k)$-flow-equivalent 
to $(D_0,f_0)$. By reversing every edge with negative value, we assume that $f(e)\in \{k,k+1\}$ for every edge $e$. Let $G_k$ and $G_{k+1}$ be the spanning subgraphs of $G$ consisting of edges in $(D,f)$, respectively, with values $k$ and $k+1$. If for every vertex $v$ we have $d_{G_k}^+(v)-d_{G_k}^-(v)=0$, then it follows that $d_{G_{k+1}}^+(v)-d_{G_{k+1}}^-(v)=0$, and hence both $G_k$ and $G_{k+1}$ are Eulerian. We decompose $G_{k+1}$ into directed cycles and subtract 1 from the values on each cycle. This gives a flow $(D,f')$ such that $D$ is an Eulerian orientation and $f'(e)=k$ for all $e$. By Lemma~\ref{flip-equiv-lem}, the flows $(D,f')$ and $(D_0,f_0)$ are equivalent, and we are done.

So assume instead that there exists a vertex $v$ such that $d_{G_k}^+(v)-d_{G_k}^-(v) > 0$; since $v$ has net flow 0, consequently $d_{G_{k+1}}^+(v)-d_{G_{k+1}}^-(v) < 0$. Now reverse every edge $e$ in $G_{k+1}$, assign the flow value $-(k+1)$ to $e$, and denote the resulting oriantation and flow by $(D_1,f_1)$. Note that, since $-(k+1)\equiv k \pmod{2k+1}$, for every edge $e$ we have that $f_1(e)\equiv k\pmod{2k+1}$.
Thus $D_1$ is a $\Z_{2k+1}$-orientation of $G$; this is because $k$ and $2k+1$ are relatively prime. But note that $d_{D_1}^+(v)-d_{D_1}^-(v) = (d_{G_k}^+(v)-d_{G_k}^-(v)) - (d_{G_{k+1}}^+(v)-d_{G_{k+1}}^-(v)) > 0$. 
Because \ref{Equiv2} is equivalent to \ref{Equiv3}, we know that $\O(G,\Z_{2k+1})$ is connected. And because \ref{Equiv3} is equivalent to \ref{Equiv4}, we know that $G$ has an Eulerian orientation $D_0$, which is also a $\Z_{2k+1}$-orientation. But this contradicts the Flip-Equivalence Lemma (Lemma~\ref{flip-equiv-lem}), since $d^+_1(v)>d^+_0(v)$.
\end{proof}

\section{\texorpdfstring{$\bm{\Z_4}$}{Z4}-Flow-connectedness and \texorpdfstring{$\bm{4}$}{4}-Flow-connectedness in Cubic Graphs}
\label{Z4flow-sec}

In this section, we prove that every cubic bipartite graph is $4$-flow-connected (and hence $\Z_4$-flow-connected). We also show that the prism graphs $C_n\square P_2$ ($n\ge 4$) are $\Z_4$-flow-connected. In contrast, we give sufficient conditions for a cubic graph to fail to be $\Z_4$-flow-connected. 

Throughout this section, we only consider cubic graphs that are $3$-edge-colorable; otherwise, no nowhere-zero $4$-flow exists and the graph's $4$-flow-connectedness is trivial. We call a $k$-flow $(D,f)$ \emph{positive} if every edge $e$ satisfies $f(e)>0$. (Noting that every $k$-flow is the same as a positive $k$-flow, we may regard the vertices of $\F(G,4)$ as precisely the distinct positive $4$-flows of $G$.) We begin with the following lemma. 

\begin{lem}\label{orient-M-lem}
Let $G$ be a cubic bipartite graph with bipartition $(A,B)$. Let $(D,f)$ be a positive
$4$-flow of $G$, and let $M$ be the $1$-factor consisting of the edges with flow value $2$ in $f$. Then $(D,f)$ is flow-equivalent to a positive $4$-flow in which every edge of $M$ still has value $2$ and is oriented from $B$ to $A$.
\end{lem}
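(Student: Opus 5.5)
The plan is to use a single kind of move, namely reversing a directed cycle of $D$ while replacing each value $v$ on it by $4-v$, and to choose these cycles so that the number of $M$-edges oriented from $A$ to $B$ strictly decreases.

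\textbf{Step 1: structure of $f$.} At each vertex, the three values lie in $\{1,2,3\}$ and satisfy conservation. By parity, an even number of the three values are odd, so exactly one edge at each vertex has value $2$. Hence $M$ is indeed a $1$-factor. The remaining edges carry values in $\{1,3\}$ and form a $2$-factor.

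\textbf{Step 2: the basic move.} Let $C$ be a directed cycle of $D$. Every edge of $C$ carries a positive value $v\in\{1,2,3\}$. Subtracting $4$ along $C$ gives the value $v-4\in\{-3,-2,-1\}$ on each edge of $C$. This is again a $4$-flow, and it is adjacent to $f$ in $\F(G,4)$. Rewritten as a positive flow, the move reverses every edge of $C$ and sends the values $1\leftrightarrow 3$ and $2\mapsto 2$. So $M$ remains exactly the set of value-$2$ edges. The only effect on $M$ is that each $M$-edge on $C$ has its direction reversed.

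\textbf{Step 3: induction on the bad edges.} Let $N$ be the number of edges of $M$ oriented from $A$ to $B$, and induct on $N$. Suppose $e=ab\in M$ is oriented $a\to b$ with $a\in A$. Let $H$ be the digraph obtained from $D$ by deleting every $M$-edge oriented $B\to A$. It suffices to find a directed cycle of $H$ through $e$. Reversing it by the move of Step 2 flips $e$, together with possibly other $A\to B$ edges of $M$, to the direction $B\to A$. No $M$-edge is flipped from $B\to A$ to $A\to B$, since no such edge lies in $H$. Therefore $N$ strictly decreases.

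\textbf{Step 4: finding the cycle (main obstacle).} Suppose $H$ has no directed cycle through $e$. Let $S$ be the set of vertices reachable from $b$ in $H$; then $a\notin S$. Every edge of $D$ leaving $S$ must be an $M$-edge oriented from $S\cap B$ to $A\setminus S$, and each such edge carries value $2$. Flow conservation across the cut $(S,\overline{S})$ then says that $2\cdot(\#\text{such edges})$ equals the total value entering $S$, which includes the edge $e$.

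I would try to derive a contradiction by combining this cut equation with two counting identities:
\begin{itemize}
\item In the cubic bipartite graph, $3|S\cap A|-3|S\cap B|$ equals the number of cut edges with tail side in $S\cap A$ minus the number with tail side in $S\cap B$.
\item Since $M$ is a perfect matching, $|S\cap A|-|S\cap B|$ equals the number of $M$-edges in the cut meeting $S\cap A$ minus those meeting $S\cap B$.
\end{itemize}
Together with the mod-$4$ behaviour of the values $1,3$ on the $2$-factor edges, these should be incompatible with $e$ entering $S$ at $b\in B$.

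If this cut argument resists, my fallback is to work component by component on the even cycles of the $2$-factor $G-M$. I would build the required directed cycle explicitly, alternating between arcs of these cycles and $M$-edges, and use the fact that each vertex has values $(1,1,2)$ or $(1,2,3)$ to control the orientations along the way.
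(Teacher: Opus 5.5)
Your Steps 1--3 are sound. Subtracting $4$ along a directed cycle of $D$ is a legal move: it reverses the cycle, swaps $1\leftrightarrow 3$ and fixes $2$. When the cycle lies in $H$, it strictly lowers $N$. But Step 4 carries the entire content of the lemma, and you leave it at ``these should be incompatible'' plus a fallback. As written, that is a gap.

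The gap does close along your own lines. The missing ingredient is positivity of the values, not their behaviour mod $4$. Take your $S$ (the vertices reachable from $b$ in $H$), and assume $a\notin S$. Set up the following counts:
\begin{itemize}
\item $a_S=|S\cap A|$ and $b_S=|S\cap B|$;
\item $k$ is the number of edges leaving $S$; all of these are $M$-edges out of $S\cap B$;
\item $m_A$ and $m_B$ are the numbers of $M$-edges entering $S$ at $S\cap A$ and at $S\cap B$; note $m_B\ge 1$ because of $e$;
\item $t_A$ and $t_B$ are the numbers of cut edges of the $2$-factor $G-M$ meeting $S\cap A$ and $S\cap B$, respectively;
\item $\Sigma$ is the total value of these $2$-factor cut edges. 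All of them lie in $H$, so all of them enter $S$.
\end{itemize}
Then
\begin{align*}
a_S-m_A&=b_S-k-m_B &&\text{(perfect matching)},\\
t_A-t_B&=2(a_S-b_S) &&\text{(your cubic identity minus the matching one)},\\
2k&=2m_A+2m_B+\Sigma &&\text{(conservation across the cut)}.
\end{align*}
Eliminating $k$ and $a_S-b_S$ gives $\Sigma=t_B-t_A-4m_B$. Every value is at least $1$, so $\Sigma\ge t_A+t_B$. Hence $2t_A+4m_B\le 0$, contradicting $m_B\ge 1$. So $a\in S$, the directed cycle of $H$ through $e$ exists, and the fallback is unnecessary.

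With Step 4 filled in this way, your route differs genuinely from the paper's. The paper never looks at a cut. It grows a directed walk from $e$ by local rules:
\begin{itemize}
\item at a vertex of $A$, take an outgoing value-$2$ edge if there is one;
\item at a vertex of $B$, take an outgoing value-$1$ edge if there is one;
\item otherwise, take an outgoing value-$3$ edge.
\end{itemize}
It then uses the two possible local patterns (a $2$ balanced by two $1$'s, or a $3$ balanced by a $1$ and a $2$). These show the walk never stalls and never traverses an $M$-edge out of $B$. Finally it shows the cycle it closes contains a value-$2$ edge. That cycle need not contain $e$, but it still lowers the number of badly oriented $M$-edges.

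Your argument trades this case analysis for a short global count. It also gives a stronger statement: every $A\to B$ edge of $M$ lies on a directed cycle of $H$.
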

\begin{proof}
	Suppose an edge $e$ of $M$ is oriented from $A$ to $B$ under $D$.  
    We now grow a directed path $P$ starting from $e$. 
    Let $x$ be the current head of $P$ at each step.
	If $x\in A$ and $x$ has an outgoing edge of value $2$, then we follow such an edge. 
	If $x\in B$ and $x$ has an outgoing edge of value $1$, then we follow it.   
	Otherwise, we follow an outgoing edge of value $3$.
    
    To begin, we show that the path can always be extended. 
	If $x\in A$, then the path enters $x$ either along an edge of value $3$, in which case $x$ has an outgoing edge of value $2$, or along an edge of value $1$, in which case $x$ has an outgoing edge of value $2$ or $3$. Similarly, if $x\in B$, then the path enters $x$ either along an edge of value $3$, in which case $x$ has an outgoing edge of value $1$, or along an edge of value $2$, in which case $x$ has an outgoing edge of value $1$ or $3$.
	Thus, we can continue to grow this path until we return to some vertex that we have already visited; at this point, we say that we close the cycle, which we denote by $C$.  
    \begin{figure}[!t]
\begin{tikzpicture}[semithick,
    scale=.8, xscale=2, 
    vertex/.style={
        circle, 
        draw, 
        fill=white, 
        minimum size=3pt,  
        inner sep=0pt
    },
    directed edge/.style={
        decoration={
            markings,
            mark=at position 0.3 with {\arrow{stealth}}
        },
        postaction={decorate}
    },
    weight label/.style={
        draw=none, 
        fill=white, 
        inner sep=1.5pt,   
        font=\small,       
        pos=0.7
    }
]

\draw (-2.55,0) node[draw=none, fill=none] {};
\draw (.8,0) node[draw = none] {\footnotesize{$A$}};
\draw (.8,2) node[draw = none] {\footnotesize{$B$}};
\draw (2.3,0) node[draw = none] {\footnotesize{$P$}};
    \foreach \i in {1,...,4} {
        \node[vertex] (a\i) at (\i, 0) {};
    }
    
    \foreach \i in {1,...,4} {
        \node[vertex] (b\i) at (\i, 2) {};
    }

    \draw[directed edge] (a1) -- node[weight label] {\scriptsize{2}} (b1);
    
    \draw[directed edge] (b1) -- node[weight label] {\scriptsize{1}} (a2);
    
    \draw[directed edge] (b2) -- node[weight label] {\scriptsize{2}} (a2);
    
    \draw[directed edge] (a2) -- node[weight label] {\scriptsize{3}} (b3);
    
    \draw[directed edge] (b3) -- node[weight label] {\scriptsize{2}} (a3);
    
    \draw[directed edge] (b3) -- node[weight label] {\scriptsize{1}} (a4);
    
    \draw[directed edge] (b4) -- node[weight label] {\scriptsize{2}} (a4);

    \draw[ultra thick] (1.07,0) -- (1.07,1.68) -- (2,-.25) -- (3,1.75) -- (4,-.25);
\end{tikzpicture}
	\caption{Extending the path $P$.%
}
\end{figure}

    We will ensure that $C$ contains at least one edge with flow value $2$.
    If the closing edge has value $2$, then clearly $C$ contains an edge with value $2$. 
    If the closing edge has value $1$, then its head is in $A$, say at a vertex $v$.  Now $v$ has an incident edge $e$ with value $2$. 
    If we previously entered and exited $v$, then we used $e$
    to do one of these. But, by design, we must have followed $e$ from $A$ to $B$, so $e$ lies on $C$.
	Finally, note that we actually cannot close $C$ on an edge with value $3$, unless we close it at the start of $P$.  The reason is that the vertex $w$ at the head of the edge with value $3$ must have two outgoing edges.  So we could not
	have previously entered and exited $w$.  
    Thus, $C$ contains an edge with value $2$.

	Recall that every edge of $C$ with flow value $2$ is directed from $A$ to $B$.
    Now we subtract $4$ from the flow value of each edge on $C$, which reverses the orientations of all of its edges, interchanges edges with flow values $1$ and $3$, and maintains those with flow value $2$. This results in more edges with flow value $2$ oriented from $B$ to $A$. By repeating this process, we eventually get all edges of $M$ oriented from $B$ to $A$,
	as desired.
\end{proof}

\begin{theorem}\label{thm:4-flow-bipartite-cubic}
Every cubic bipartite graph is $4$-flow-connected.
\end{theorem}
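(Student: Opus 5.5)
The plan is to show that every positive $4$-flow of a cubic bipartite graph $G$ with bipartition $(A,B)$ is flow-equivalent to one fixed canonical flow. First, a parity observation: reducing a positive $4$-flow modulo $2$ gives a $\Z_2$-flow, so the edges with odd values ($1$ or $3$) form an even subgraph. Hence every vertex meets exactly two odd edges and exactly one edge of value $2$, so the edges of value $2$ form a perfect matching $M$, as used in Lemma~\ref{orient-M-lem}.

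For each perfect matching $M$ of $G$, define the canonical flow $f^*_M$ as follows: every edge of $M$ gets value $2$ oriented from $B$ to $A$, and every other edge gets value $1$ oriented from $A$ to $B$. Conservation holds at each vertex, since $a\in A$ receives $2$ and sends $1+1$, and symmetrically at each $b\in B$. The proof then has three steps.

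\textbf{Step (a).} Every positive $4$-flow whose matching of $2$'s is $M$ is flow-equivalent to $f^*_M$. By Lemma~\ref{orient-M-lem}, we may first assume all edges of $M$ carry $2$ from $B$ to $A$. At $a\in A$ the other two edges must then carry net outflow $2$, so they are either both $1$ out of $a$, or $3$ out of $a$ and $1$ into $a$. The same holds symmetrically at vertices of $B$. Now consider the difference $g=f-f^*_M$, measured in the $A\to B$ direction. It vanishes on $M$, and on edges of $G-M$ it takes values in $\{0,\pm2\}$: it is $+2$ on an edge carrying $3$ from $A$ to $B$, and $-2$ on an edge carrying $1$ from $B$ to $A$. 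Since $g$ is a flow supported on the $2$-regular graph $G-M$, it is constant, with respect to a traversal direction, on each cycle of $G-M$. Thus each cycle of $G-M$ either agrees with $f^*_M$ or carries alternating values $3,1$ as a directed cycle. For each cycle of the second kind, subtract $2$ around it (one reconfiguration step). The result is again a positive $4$-flow, since the values become $1$'s oriented from $A$ to $B$. After finitely many such steps we reach $f^*_M$.

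\textbf{Step (b).} If $M$ and $M'$ are perfect matchings with $M\triangle M'$ a single cycle $C$, then $f^*_M$ and $f^*_{M'}$ are equivalent. Traverse $C$ so that $M'$-edges are used from $A$ to $B$ and $M$-edges from $B$ to $A$. In this direction $f^*_M$ has value $1$ on the $M'$-edges and $2$ on the $M$-edges. Adding $1$ around $C$ is a single step and gives value $2$ on the $M'$-edges and $3$ on the $M$-edges, a positive $4$-flow whose matching of $2$'s is $M'$. Step (a) then carries it to $f^*_{M'}$.

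\textbf{Step (c).} Any two perfect matchings of $G$ differ by a disjoint union of alternating cycles. Swapping these cycles one at a time gives a chain of perfect matchings in which consecutive members differ on one cycle, so Step (b) connects all the canonical flows. (If $G$ has no positive $4$-flow the statement is vacuous; a $3$-edge-coloring from K\H{o}nig's theorem in fact supplies one.)

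I expect the main obstacle to be Step (a). There one must check carefully that the flows are constrained enough that the difference from $f^*_M$ is a union of cycle flows, and that each correction step keeps every value in $\{1,2,3\}$. Steps (b) and (c) are routine once the canonical flows are set up.
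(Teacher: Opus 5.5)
Your proposal is correct and follows essentially the same route as the paper. Both normalize the orientation of the value-$2$ matching via Lemma~\ref{orient-M-lem}, clear the alternating $3,1$ directed cycles of $G-M$ by subtracting $2$, and then move between canonical flows one cycle of $M\triangle M'$ at a time. The differences are cosmetic: you find the $3,1$ cycles through the difference flow $f-f^*_M$ rather than by growing a directed path, and you add $1$ around each alternating cycle and then renormalize with Step (a), whereas the paper subtracts $3$ around it, which lands directly on the canonical flow for the new matching.
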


\begin{proof}
Let $G$ be a cubic bipartite graph with bipartition $(A,B)$. Let $f_1$ and $f_2$ be two arbitrary positive $4$-flows on $G$. For each $i\in[2]$, let $M_i$ be the $1$-factor formed by the edges with flow value $2$ in $f_i$. To see the $4$-flow-equivalence between $f_1$ and $f_2$, by Lemma~\ref{orient-M-lem}, we may assume that each edge of $M_i$ is oriented from $B$ to $A$.  

We next transform each $f_i$ into a flow such that every edge of $E(G)\setminus M_i$ has flow value $1$ and is oriented from $A$ to $B$ (while preserving the property that every edge of each $M_i$ has flow value $2$ and is oriented from $B$ to $A$).
Suppose instead that some edge $e_1\in E(G)\setminus M_1$ has flow value 3. So $e_1$ is oriented from $A$ to $B$; let $v$ be its head. Thus, there is an outgoing edge $e_2$ from $v$ (from $B$ to $A$) with flow value $1$.
Repeating the same argument at the head of each newly chosen edge, we eventually obtain a directed cycle whose edge values alternate between $3$ and $1$. Now we subtract 2 from each cycle edge, which converts each cycle edge to have flow value 1, and be oriented from $A$ to $B$. Starting from $f_1$, we repeat this process until every edge outside of $M'$ has flow value 1, directed from $A$ to $B$.

Finally, we consider a cycle $C$ in the symmetric difference $M_1\triangle M_2$.  Starting from $f_1$ with its orientation, we subtract 3 from the flow value for each edge of $C$.  This changes the edges of $M_1\cap E(C)$ to have flow value 1, directed from $A$ to $B$, and changes the edges of $M_2\cap E(C)$ to have flow value 2, directed from $B$ to $A$. By repeating this step for each cycle of $M'\triangle M''$, we transform $f_1$ into $f_2$, which finishes the proof.
\end{proof}

It follows directly from the first sentence after Conjecture~\ref{conjA} that every cubic bipartite graph is $\mathbb Z_4$-flow-connected. Next, we present a sufficient condition under which a cubic graph is not $\mathbb{Z}_4$-flow-connected.

\begin{theorem}\label{notZ4flow-connectedII}
Let $G$ be a $3$-edge-colorable cubic graph. If $G$ contains a $3$-cycle, then $G$ is not $\Z_4$-flow-connected, and hence not $4$-flow-connected.
\end{theorem}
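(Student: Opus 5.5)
The plan is to use the rigid structure of $\Z_4$-flows on cubic graphs. First, I would classify the flows. At a vertex of a cubic graph, three odd values have odd signed sum, so they cannot cancel. Three values equal to $2$ give $2\ne 0$, and two $2$'s force the third value to be $0$. Hence every vertex sees exactly one edge of value $2$. So a $\Z_4$-flow $f$ is the same as a pair $(M,D_F)$, where $M=f^{-1}(2)$ is a perfect matching and $D_F$ is an orientation of the $2$-factor $F=E(G)\setminus M$ in which every cycle of $F$ is directed (all odd edges normalised to value $1$). Conversely, every such pair gives a flow; such pairs exist since $G$ is $3$-edge-colorable.

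Next I would classify the adjacencies. Suppose $f$ and $f'$ differ exactly on a cycle $C$, by adding $a\in\Z_4\setminus\{0\}$ along a traversal of $C$.
\begin{itemize}
\item If $a=2$, every $M$-edge of $C$ would become $0$. So $C$ is a cycle of $F$ and the move just reverses it.
\item If $a$ is odd, the same vertex count shows $C$ must alternate between $M$-edges and $F$-edges. The odd edges of $C$ must all become $2$, which forces their directions to be consistent with the traversal in a prescribed way. The new matching is $M\triangle C$.
\end{itemize}
Thus $\F(G,\Z_4)$ becomes a graph on pairs $(M,D_F)$, with explicit moves: reverse an $F$-cycle, or swap along a \emph{coherently oriented} $M$-alternating cycle.

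Now bring in the triangle $T=xyz$ with pendant edges $e_x,e_y,e_z$. Any perfect matching contains either exactly one edge of $T$ or all three pendant edges. I would look for an invariant of the pairs $(M,D_F)$ that is preserved by both move types but separates two explicit flows.

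The natural candidates come from the two local types at $T$. In type (a), $T\subseteq F$ and $T$ is a directed triangle. In type (b), say $xy\in M$ and the path $x z y$ lies in $F$ with a forced direction. I would first try a parity or winding invariant. One option combines which type occurs with the direction in which $F$ passes through $T$, for instance the orientation of $T$ in type (a) versus the direction of $x\to z\to y$ in type (b). I would then check it against each move. Reversing an $F$-cycle through $T$ changes that orientation, so the invariant must be chosen to be insensitive to this. A coherent alternating swap through $T$ has the form $e_x,xy,e_y$, or enters through $e_z$. Here the coherence condition at the odd edge $xy$ should correlate the new direction with the old one. If the correct invariant proves elusive, the fallback is to exhibit a specific flow whose component in $\F(G,\Z_4)$ can be enumerated locally around $T$. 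For example, take $T\subseteq F$ and orient things so that no coherent alternating cycle can use an edge of $T$. I would then show that this component misses some flow of the other type. The other type exists because $G$ is $3$-edge-colorable, and the colour classes can be permuted to put a triangle edge into $M$.

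The main obstacle is this step of identifying the right invariant. The coherence condition for alternating swaps must be computed carefully with signs in $\Z_4$; the whole argument rests on how that condition interacts with the three edges of $T$. The final clause is immediate: every walk in $\F(G,4)$ maps to a walk in $\F(G,\Z_4)$, and by Lemma~\ref{Zk-k-equiv-lem} this map on flows is surjective. So if $G$ were $4$-flow-connected it would also be $\Z_4$-flow-connected, and failure of $\Z_4$-flow-connectedness implies failure of $4$-flow-connectedness.
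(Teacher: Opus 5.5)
Your proposal stops at the step that carries the proof: you never identify the invariant. The model you set up would also steer the search wrong, because the normalisation is incorrect. At a vertex $u$, the $2$-edge contributes $2$. If both odd edges are normalised to value $1$, conservation requires $\pm1\pm1\equiv 2\pmod 4$, so the two $F$-edges at $u$ point both out of $u$ or both into $u$. Thus the cycles of $F$ are not directed: every vertex is a source or a sink of $F$, and every cycle of $F$ is alternately oriented and hence even. (Equivalently, they are directed with values alternating $1,3$; your sketch with all values $1$ on directed cycles violates conservation.) In particular your type (a) never occurs, since the triangle $T$ cannot be a component of $F$. So every $\Z_4$-flow gives value $2$ to exactly one edge of $T$, and the fallback built on type (a) is vacuous. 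Likewise, in type (b) the path $xzy$ is not directed: $z$ is a source or a sink of $F$.

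That last fact is the missing idea. If $xy\in M$, then $x$ and $y$ both have the type opposite to $z$, so they are both sinks or both sources of $F$. Take a cycle $C_0$ through $xy$, traversed from $x$ to $y$. It enters $x$ along an $F$-edge with traversal value $1$ and leaves $y$ along an $F$-edge with traversal value $3$, or the reverse. Hence $C_0$ carries the three values $1,2,3$, and adding any nonzero element of $\Z_4$ along $C_0$ creates a $0$. In your terms, the coherence condition for an $M$-alternating cycle through an $M$-edge $uw$ forces $u$ and $w$ to have opposite types, which the triangle forbids. Moves with $a=2$ run along cycles of $F$ and never touch $xy$. Every flow with $f(xy)=2$ has this local structure, so the set of flows with $f(xy)=2$ is closed under adjacency. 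A flow built from a colour class not containing $xy$ lies outside this set, so $\F(G,\Z_4)$ is disconnected. This is the paper's argument: the invariant is simply which edge of the triangle carries value $2$. Your reduction from $4$-flows to $\Z_4$-flows in the last paragraph is fine.
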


\begin{proof}
Since $G$ is $3$-edge-colorable, we choose a proper $3$-edge-coloring of $G$, and let $M \subseteq E(G)$ be one of its color classes. 
Define a $\mathbb Z_4$-flow $(D,f)$ by assigning flow value $2$ to the edges of $M$ and flow value $1$ to all remaining edges, with a suitable orientation. Let $vxy$ be a triangle in $G$. 
Because the three edges of the triangle receive distinct colors, exactly one of them has flow value $2$.  
Without loss of generality, assume that $f(xy)=2$ and $f(vx)=f(vy)=1$. 
So the two edges $vx$ and $vy$ are oriented either both away from $v$ or both toward $v$, 
depending on the orientation of $D$.

\begin{figure}[!h]
\centering
\begin{tikzpicture}[thick, scale=1]
\tikzstyle{uStyle}=[shape = circle, minimum size = 5pt, inner sep = 0pt,
outer sep = 0pt, draw, fill=white, semithick]
\tikzstyle{bStyle}=[shape = circle, minimum size = 5pt, inner sep = 0pt,
outer sep = 0pt, draw, fill=gray, semithick]
\tikzstyle{eStyle}=[shape = circle, minimum size = 5pt, inner sep = 0pt,
outer sep = 0pt, draw=none, fill=none, semithick]
\tikzset{every node/.style=uStyle}

\draw (0,0) node[eStyle] (w1) {} (1,0) node (x) {} (2,0) node (y) {} (3,0) node[eStyle] (w2) {}
(1.5,-.866) node (v) {};

\draw[shorten <= 2pt, shorten >= 2pt, line width=2.0pt] (x) -- (y);
\draw[shorten <= 2pt, shorten >= 2pt, mid arrow] (v) -- (x);
\draw[shorten <= 2pt, shorten >= 2pt, mid arrow] (v) -- (y);
\draw[shorten <= 2pt, shorten >= 2pt, mid arrow] (w1) -- (x);
\draw[shorten <= 2pt, shorten >= 2pt, mid arrow] (w2) -- (y);
\draw (v) ++ (0,-.25) node[eStyle] {$v$};
\draw (x) ++ (-.125,-.225) node[eStyle] {$x$};
\draw (y) ++ (.125,-.265) node[eStyle] {$y$};

\end{tikzpicture}
	\captionsetup{width=.72\textwidth}
	\caption{Each reconfiguration step maintains the invariant that arcs $vx$ and $vy$ each have flow value 1
and edge $xy$ has flow value 2.}
\end{figure}

We first show that the value on $xy$ cannot be changed in any valid reconfiguration step. 
Suppose, for contradiction, that some cycle $C_0$ (supporting the difference between two adjacent flows) contains $xy$. Since $C_0$ is a cycle, it must contain exactly one other edge incident with $x$ and one other edge incident with $y$.  With respect to the Eulerian orientation of $C$, these two edges have flow values $1$ and $3$. 
Thus $C_0$ contains the three distinct values $1,2,3$, and no non-zero element of $\mathbb Z_4$ can be added along $C_0$ without creating a zero value. Hence $C_0$ cannot be used in any valid reconfiguration step. 
Moreover, after any such step, the values on $vx$ and $vy$ remain $1$ and are either both directed out of $v$ or both into $v$. 
Thus the same obstruction persists, and $xy$ can never belong to any such cycle. 
Therefore $f(xy)=2$ is invariant throughout the reconfiguration process.

Now choose another color class $M'$ of the $3$-edge-coloring such that $xy \notin M'$. 
Analogously, $M'$ induces a $\mathbb Z_4$-flow $f'$ in which the edges of $M'$ have value $2$ and all remaining edges have value $1$. 
By construction, $f'(xy) = 1 \neq 2 = f(xy)$. 
Since the value $f(xy)=2$ is invariant under reconfiguration steps, $f$ and $f'$ lie in different components of $\F(G,\mathbb Z_4)$. 
\end{proof}

To conclude this section, we give an explicit family of non-bipartite cubic graphs $\G$ such that each $G\in \G$ is $\Z_4$-flow-connected.

\begin{theorem}
For any integer $n\ge 4$, the prism graph $C_n \square P_2$ is $\mathbb{Z}_4$-flow-connected.
\label{odd-prism-thm}
\end{theorem}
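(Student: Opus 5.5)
Throughout, write $G_n:=C_n\square P_2$, with outer cycle $u_1\cdots u_n$, inner cycle $v_1\cdots v_n$, and spokes $u_iv_i$. The plan is to fix one canonical $\Z_4$-flow $f^*$ and show that every $\Z_4$-flow on $G_n$ is flow-equivalent to it. Since $G_n$ is cubic, any $\Z_4$-flow $f$ determines the set $M_f=\{e: f(e)=2\}$. At each vertex, flow conservation with values in $\{1,2,3\}$ forces exactly one or exactly three incident edges to have value $2$. So $M_f$ is a spanning subgraph with all degrees odd. Because $G_n$ is cubic, $M_f$ is either a perfect matching or a union of a perfect matching with cubic components; the latter can only mean $M_f=E(G_n)$ on the whole graph, which is the constant-$2$ flow.

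Given $M:=M_f$, the edges outside $M$ form a $2$-factor $F$ whose edges carry values $\pm1$. Conservation then forces each cycle of $F$ to be oriented consistently as a directed cycle. Flipping a cycle of $F$ (adding $2$ on it) changes only its direction. So two flows with the same $M$ differ by reversing cycles of $F$ and by a sign choice on $M$, and the sign on $M$ is determined by the orientations of $F$-cycles. This reduces the problem to connecting the perfect matchings (plus the all-$2$ flow) through legal moves.

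The perfect matchings of $G_n$ are easy to list. One is the set $S$ of all spokes. The others use no spokes or an even number of spokes in structured patterns, for example alternating rim matchings, or ``ladder squares'' formed by taking the square $u_iu_{i+1}v_{i+1}v_i$ with spokes $u_iv_i,u_{i+1}v_{i+1}$.

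The key local move acts on a $4$-cycle $Q=u_iu_{i+1}v_{i+1}v_i$ in which $M$ contains the two rim edges $u_iu_{i+1},v_iv_{i+1}$ and the two spokes carry value $\pm1$. Adding a suitable $c\in\{1,3\}$ around $Q$, chosen to match the spoke orientations, turns the rim edges into odd values and the spokes into $2$. This swaps $M$ to $M\triangle E(Q)$, and it is legal exactly when both spokes are traversed in the same direction relative to $Q$. The spoke directions are dictated by the $F$-cycles, so beforehand I first reverse $F$-cycles (legal moves, as above) to arrange the needed directions. Similarly, a $6$- or longer alternating cycle can be used. With these moves I intend to show that every perfect matching can be moved to $S$. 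Each matching other than $S$ contains a pair of parallel rim edges in some square (this holds for $n\ge4$ by a short case check on how rim edges pair up), and swapping that square strictly increases the number of spokes.

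For the all-spoke matching $S$, the $2$-factor is the two rims. Reversing them independently gives all four flows with $M=S$, so they are all connected. The all-$2$ flow is handled by adding $1$ or $3$ on the outer rim, which lands on a flow with $M=S$.

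The main obstacle is the orientation bookkeeping in the square swap: ensuring that the two spokes of $Q$ can be made to run in opposite senses along the rims (same sense around $Q$) via reversals of $F$-cycles. This fails when both spokes lie on the same $F$-cycle with the wrong parity, which is exactly the triangle-type obstruction of Theorem~\ref{notZ4flow-connectedII}. Here I would use $n\ge4$ to pick a different square, or to first perform an intermediate swap on another square. I will try a case analysis according to whether the spoke edges of $Q$ lie on a common $F$-cycle.
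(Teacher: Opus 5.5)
There is a genuine gap, and it starts with the local analysis. At a cubic vertex, conservation in $\Z_4$ forces exactly one incident edge of value $2$. Three $2$'s give net flow $6\equiv 2$, and zero or two $2$'s give an odd net flow. So $M_f$ is always a perfect matching, and there is no all-$2$ flow. More importantly, normalize the two $F$-edges at a vertex to value $1$. Their contributions $\pm1$ must then sum to $2\pmod 4$, so either both point out of the vertex or both point in. Hence every cycle of $F$ alternates sources and sinks. It is never a directed cycle, and in particular it must be \emph{even}.

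This kills your target. For even $n$ the prism is cubic bipartite, so the statement already follows from Theorem~\ref{thm:4-flow-bipartite-cubic}; the only real case is odd $n$. But for odd $n$ the all-spoke matching $S$ carries no $\Z_4$-flow at all, because its complement is the two odd rims. Equivalently, the spokes form an edge cut carrying $2n\equiv 2\pmod 4$. Likewise, every perfect matching has $\equiv n\pmod 2$ spokes, so for odd $n$ an odd number, contrary to your listing. (Even for even $n$, your claim that every matching other than $S$ contains a parallel rim pair fails for the two out-of-phase spokeless matchings.)

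Consequently, your monotone procedure of turning cycle pairs into spoke pairs cannot end at $S$. It stalls at matchings with exactly one cycle pair $\{u_iu_{i+1},v_iv_{i+1}\}$ and all other spokes in $M$. There $F$ is a single Hamiltonian $2n$-cycle containing both spokes of $Q=u_iu_{i+1}v_{i+1}v_i$. The swap on $Q$ would produce $M\triangle E(Q)=S$, so it is illegal for every flow. No other square or intermediate swap can rescue this, because the endpoint you aim for does not exist.

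What is missing is a second phase showing that all flows with exactly one cycle pair are mutually equivalent. The paper does this as follows.
\begin{itemize}
\item From one cycle pair, swap the adjacent spoke pair $\{u_{i+2}v_{i+2},u_{i+3}v_{i+3}\}$ into a second, consecutive cycle pair. This is legal by a parity check along the Hamiltonian $F$-cycle.
\item Then swap the original cycle pair back into spokes. This is legal once there are two cycle pairs, because the two spokes of that square now lie on different $F$-cycles, and type-switching one of them fixes the orientation.
\item This shifts the cycle pair by two positions; since $n$ is odd, repetition reaches every position.
\item For a fixed one-cycle-pair matching, the two flows differ by type-switching the Hamiltonian $F$-cycle.
\end{itemize}
Your first phase, reducing the number of cycle pairs and preparing each square by reversing $F$-cycles, does coincide with the paper's.
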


\begin{proof}
Let $G:=C_n \square P_2$.
Note that $G$ is cubic, and if $n$ is even, then $G$ is also bipartite. Hence, $G$ is $\mathbb Z_4$-flow-connected by Theorem~\ref{thm:4-flow-bipartite-cubic}. Thus, we assume that $n$ is odd.
Denote $V(G)$ by $\{v_1,\ldots,v_n\}\cup\{w_1,\ldots w_n\}$, where $v_iv_{i+1}$ and $w_iw_{i+1}$ (indices taken modulo $n$) form, respectively, the inner and outer cycles $C_{\rm in}$ and $C_{\rm out}$. 
Let $S:=\{v_iw_i:i\in[n]\}$. We call elements of $S$\emph{spokes}.

A perfect matching $M$ of $G$ is called \emph{admissible} if every component of $G-M$ is an even cycle. In any nowhere-zero $\mathbb Z_4$-flow on $G$, the edges of value $2$ form a perfect matching $M$; let $F:=G-M$, and note that $F$ is a 2-factor. For each $\Z_4$-flow $f$ on $G$, note that $M$ cannot consist of all $n$ spokes (if so, then $V(C_{\rm in})$ would have non-zero net flow in, a contradiction). Thus, we conclude that $F$ consists only of even cycles; this holds because the only vertex disjoint odd cycles in $G$ are $C_{\rm in}$ and $C_{\rm out}$. That is, every $M$ arising from a $\Z_4$-flow on $G$ is indeed admissible.

\begin{figure}[!h]
\centering
\begin{tikzpicture}[thick, scale=1]
\tikzstyle{uStyle}=[shape = circle, minimum size = 5pt, inner sep = 0pt,
outer sep = 0pt, draw, fill=white, semithick]
\tikzstyle{bStyle}=[shape = circle, minimum size = 5pt, inner sep = 0pt,
outer sep = 0pt, draw, fill=gray, semithick]
\tikzstyle{eStyle}=[shape = circle, minimum size = 5pt, inner sep = 0pt,
outer sep = 0pt, draw=none, fill=none, semithick]
\tikzset{every node/.style=uStyle}

\foreach \i in {1,2,...,7}
{
	\draw (\i,0) node (x\i) {} (\i,1) node (y\i) {};
	\draw[shorten >=3pt, shorten <= 3pt, dash pattern=on 2pt off 1pt on 2pt off 1pt] (x\i) -- (y\i);
}

\draw (5.5,0) node[eStyle] {$\cdots$};
\draw (5.5,1) node[eStyle] {$\cdots$};

\foreach \i/\j in {1/2,2/3,3/4,4/5,6/7}
{
	\draw[shorten <=3pt, shorten >=3pt] (x\i) -- (x\j); 
	\draw[shorten <=3pt, shorten >=3pt] (y\i) -- (y\j);
}

\draw[line width=1.5pt] (1.1,.15) -- (1.1,.85);
\draw[line width=1.5pt] (2.1,.15) -- (2.1,.85);
\draw[line width=1.4pt] (3.15,.1) -- (3.85,.1);
\draw[line width=1.4pt] (3.15,.9) -- (3.85,.9);
\draw[line width=1.5pt] (5.1,.15) -- (5.1,.85);
\draw[line width=1.4pt] (6.15,.1) -- (6.85,.1);
\draw[line width=1.4pt] (6.15,.9) -- (6.85,.9);
\draw[shorten <=2pt, shorten >=2pt] (y1) to[in=25, out=155, looseness=0.6] (y7);
\draw[shorten <=2pt, shorten >=2pt] (x1) to[in=-25, out=-155, looseness=0.6] (x7);

\draw (4.5,1.2) node[eStyle] {\scriptsize{$C_{in}$}};
\draw (4.5,-.2) node[eStyle] {\scriptsize{$C_{out}$}};
\end{tikzpicture}
	\captionsetup{width=.625\textwidth}
	\caption{A prism graph $C_n\square P_2$, with each spoke drawn as a dashed line, along with a perfect matching $M$ (drawn in bold). Here $k_M=2$, since $M$ contains 2 pairs of cycle edges.}
\end{figure}

Since $n$ is odd, every perfect matching contains an odd number of spokes. And if $M$ is admissible,
then $M$ contains at least one pair of parallel cycle edges $v_iv_{i+1}$ and $w_iw_{i+1}$; we call these parallel edges a \emph{cycle pair}. Let $k_M$ denote the number of cycle pairs in $M$.

We now describe the flow operations. For a fixed admissible matching $M$, assign value $2$ to edges in $M$ and value $1$ to all other edges (with a suitable orientation). On each cycle of $F$, vertices alternate between Type~1 (both incident edges directed out) and Type~2 (both incident edges directed in). Subtracting $2$ along the edges of a cycle of $F$ switches these two types. We call this operation \emph{type-switching}. Hence, all flows associated with a fixed admissible matching are $\mathbb Z_4$-flow-equivalent.

We next describe our key local move. Let $K_i:=v_iv_{i+1}w_{i+1}w_i$. 
A \emph{$C_4$-swap} on $K_i$ means adding $1$ to the flow value of each edge of $K_i$.  
This operation replaces the cycle pair with 
the spoke pair $(v_iw_i, v_{i+1}w_{i+1})$ and decreases $k_M$ by one. 
If $(v_iv_{i+1}, w_iw_{i+1})$ is a cycle pair, and $k_M\ge 2$, then by applying type-switching on components of $F$ we may assume that the orientation on $K_i$ allows a $C_4$-swap. 
Conversely, a spoke pair can be transformed into a cycle pair by the same operation.

Starting from any admissible matching, we use type-switching on components of $F$ to prepare a 4-cycle containing a cycle pair so that we can apply a $C_4$-swap. Repeating this process reduces $k_M$ until we obtain an admissible matching with $k_M=1$.

When $k_M=1$, the unique cycle pair determines a Hamilton cycle in $F$. Applying a $C_4$-swap on a suitable 4-cycle transforms a spoke pair into a cycle pair, yielding an admissible matching with $k_M=2$; and the two cycle pairs can be chosen to be consecutive.

Finally, for admissible matchings with $k_M=2$ and consecutive cycle pairs, type-switching allows us to normalize orientations so that repeated $C_4$-swaps shift cycle pairs along the prism. Hence, all such configurations are $\mathbb Z_4$-flow-equivalent.
\end{proof}

\section{Flow Connectedness of Eulerian Graphs}\label{sec:Eulerian}

Eulerian graphs are precisely the graphs that must have an $A$-flow whenever $|A|\ge 2$ (and a $k$-flow whenever $k\ge 2$). In Section~\ref{Z3-sec}, we showed that the analogous statement fails for both $\mathbb{Z}_3$-flow-connectedness and $3$-flow-connectedness. But we believe that something similar is true.  In this short section, we pose a conjecture in this direction. We also offer some evidence to support this conjecture.\footnote{With the goal of presenting more accessible results first, we put this section at this point in the paper even though a few of its proofs rely on results from later sections (namely, that $C_2$, $C_3$, and $C_4$ are ``contractible'' for $A$-flow-connectedness when $|A|\ge 5$, and that all $4$-edge-connected graphs are $A$-flow-connected when $|A|$ is large).}

\begin{conj}\label{eulerian-conj}
Let $G$ be an Eulerian graph. The following two claims hold.
    \begin{itemize}
        \item $G$ is $k$-flow-connected 
    whenever $k\ge 4$.
        \item $G$ is $A$-flow-connected 
    whenever $A$ is an abelian group with $|A|\ge 4$.
    \end{itemize}
\end{conj}

It turns out that proving Conjecture~\ref{eulerian-conj} is fairly straightforward when $k$ or $|A|$ is even, as we now show. (But the odd case seems likely to be harder.)

\begin{theorem}\label{thm:Eulerian 2k-flow-connetced}
    For any positive integer $k$, every Eulerian graph is $2k$-flow-connected.
\end{theorem}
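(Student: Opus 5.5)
The plan is to show that every positive $2k$-flow on an Eulerian graph $G$ is $2k$-flow-equivalent to a \emph{canonical} flow: an Eulerian orientation $D$ of $G$ with value $k$ on every edge. Two canonical flows are always equivalent. If $(D,k)$ and $(D',k)$ are canonical, then $D$ and $D'$ are both Eulerian, so they have the same out-degrees. By the Flip-Equivalence Lemma (Lemma~\ref{flip-equiv-lem}), $D$ can be turned into $D'$ by reversing directed cycles. Reversing a directed cycle $C$ of $D$ amounts to subtracting $2k$ from the value $k$ on each edge of $C$, which yields $-k$, i.e.\ value $k$ in the reversed direction. Every intermediate flow is therefore a nowhere-zero $2k$-flow, so all canonical flows lie in one component of $\F(G,2k)$.

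To reach a canonical flow from an arbitrary positive $2k$-flow $(D^*,f)$, with $f(e)\in\{1,\dots,2k-1\}$, I would use the potential $\Phi(f)=\sum_e \bigl|f(e)-k\bigr|$ and show that if $\Phi(f)>0$ then some flow adjacent to $f$ has smaller $\Phi$. Form the digraph $D'$ from $D^*$ by reversing every edge with $f(e)<k$, and deleting the edges with $f(e)=k$. Any directed cycle $C$ of $D'$ gives an admissible step: traverse $C$ along $D'$ and add $-1$ in the sense of $D^*$. Edges with $f>k$ then drop by $1$, staying at least $k\ge1$, while edges with $f<k$ rise by $1$, staying at most $k$. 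So no value reaches $0$ or $\pm 2k$, and $\Phi$ drops by $|E(C)|$. Once $\Phi=0$ every value is $k$, and since the flow condition then gives $k(d^+(v)-d^-(v))=0$, the orientation is Eulerian and the flow is canonical.

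The main obstacle is showing that $D'$ contains a directed cycle whenever $\Phi>0$. To get this I would write the flow as $f=k\chi_{D^*}+\varphi$ with $\varphi(e)=f(e)-k$, and use that $G$ is Eulerian. Fix an Eulerian orientation $D_0$; then $k\chi_{D_0}$ is a flow, so $f-k\chi_{D_0}$ is an integer flow. Its nonzero part should be decomposable, via the Flow Decomposition Theorem (Lemma~\ref{lem:FlowDecomposition}) applied with multiplicities, into directed cycles. I expect each such cycle to have all its edges either with $f\neq k$, oriented as in $D'$, or with $f=k$ and disagreeing with $D_0$.

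Cycles of the second kind are harmless: on them I first flip $D^*$ towards $D_0$ on the edges with value $k$, using the canonical moves above, which do not change $\Phi$. Making this bookkeeping precise is where I expect the real work to lie. If it becomes awkward, my fallback is an induction on $\Phi$ combined with first normalizing all value-$k$ edges to agree with $D_0$ wherever possible, again through the Flip-Equivalence Lemma.
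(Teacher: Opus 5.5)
Your canonical-flow step and the descent along a directed cycle of $D'$ are both correct. The gap is in the claim that drives the induction: a positive $2k$-flow with $\Phi>0$ need not have any neighbour with smaller $\Phi$, and flipping value-$k$ edges does not fix this. Take $k=2$ and $G=K_{2,4}$ with parts $\{u,v\}$ and $\{m_1,\dots,m_4\}$. Put value $1$ on the paths $u m_i v$ ($i\le 3$), oriented from $u$ to $v$, and value $3$ on the path $v m_4 u$, oriented from $v$ to $u$.

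In this example $D'$ directs every path from $v$ to $u$, so $D'$ is acyclic although $\Phi=8$. There are no edges of value $k$ to flip. Checking the six $4$-cycles shows that every admissible move keeps all values in $\{\pm1,\pm3\}$, so every neighbour also has $\Phi=8$. The structure you expect for the cycle decomposition of $f-k\chi_{D_0}$ also fails here. Any Eulerian orientation $D_0$ directs exactly two of the four paths from $u$ to $v$, so it disagrees with $D^*$ on some value-$1$ path. On that path $f-k\chi_{D_0}$ equals $3$ in the $D^*$-direction, so the decomposition traverses it against $D'$. The real obstruction is the orientation of edges whose value is \emph{not} $k$. Such edges must be reversed by a jump across $0$, and these moves are invisible to $\Phi$. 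Here, subtracting $2$ around $u m_1 v m_4 u$ gives value $1$ on an Eulerian orientation, and after that your descent does work.

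The paper supplies exactly these moves through a parity argument. Since $G$ is Eulerian, the even-valued edges form an even subgraph, and subtracting $1$ around the cycles of a decomposition of it makes every value odd. Next, replace each edge of value $t$ by $t$ parallel unit arcs; this gives an Eulerian orientation of a multigraph $\hat G$. The Flip-Equivalence Lemma, applied in $\hat G$, moves it to an orientation in which each bundle has all but one arc paired oppositely. Each such flip corresponds to subtracting $2$ along a cycle of $G$, which preserves oddness. So every intermediate flow is nowhere-zero, with all values of absolute value at most $2k-1$. The endpoint is value $1$ on an Eulerian orientation, and all such flows are equivalent by the Flip-Equivalence Lemma. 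Your value-$k$ canonical flows can be reached from these by adding $k-1$ along directed cycles, though this step is not needed.
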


\begin{proof}
    Let $G$ be an Eulerian graph and let $(D,f)$ be a positive $2k$-flow on $G$. 
    Note that the set of edges of $G$ with even flow values induces an Eulerian subgraph $H$. By decomposing $H$ into edge-disjoint cycles, and subtracting $1$ along each of these cycles, we can change the flow values on all edges of $H$ to be odd. Hence, we transform $f$ into a flow $f'$ in which all flow values are odd and positive.

    Next, for each edge $e$ with $f'(e)=t$, we replace $e$ by a bundle $[e]$ of $t$ parallel edges, each with flow value $1$. Denote the resulting multigraph by $\hat{G}$. Now the flow $(D,f')$ naturally corresponds to an orientation $\hat{D}$ of $\hat{G}$ in which every edge has value $1$, and hence $\hat{D}$ is an Eulerian orientation of $\hat{G}$.
    By Lemma~\ref{flip-equiv-lem}, the orientation $\hat{D}$ can be transformed, via successive reversals of directed cycles, into an orientation $\hat{D}_1$ with the following property: for each bundle $[e]$ corresponding to an edge $e$ of $G$, all but one of the edges are paired in opposite directions (i.e., oriented one in and one out). This is feasible since each bundle $[e]$ contains an odd number of edges.

    The orientation $\hat{D}_1$ then naturally induces a flow $(D_1,f_1)$ on $G$, where $D_1$ is an Eulerian orientation and $f_1(e)=1$ for every edge $e\in E(G)$. Moreover, each cycle reversal in $\hat{G}$ corresponds to subtracting $2$ along a cycle in $G$, which is feasible since throughout these operations each flow value $f'(e)$ remains odd, so non-zero, and $|f'(e)|\le 2k-1$.
    Thus, every flow $(D,f)$ can be transformed into the flow $(D_1,f_1)$, where $f_1(e)=1$ for all edges $e$ and $D_1$ is Eulerian. By Lemma~\ref{flip-equiv-lem}, all Eulerian orientations are flip-equivalent, so $G$ is $2k$-flow-connected.
\end{proof}

Theorem~\ref{thm:Eulerian 2k-flow-connetced}, together with 
the sentence after Conjecture~\ref{conjA},
immediately implies that every Eulerian graph is $\Z_{2k}$-flow-connected. 
Moreover, we have that every Eulerian graph is $A$-flow-connected for each abelian group $A$ of even order.

\begin{theorem} For each abelian group $A$ and each positive integer $k$, every Eulerian graph $G$ is $(\mathbb{Z}_{2k}\times A)$-flow-connected. \end{theorem}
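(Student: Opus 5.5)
We want: every Eulerian graph $G$ is $(\Z_{2k}\times A)$-flow-connected. A $(\Z_{2k}\times A)$-flow is a pair $(g,h)$, where $g$ is a $\Z_{2k}$-flow allowed to be zero, $h$ is an $A$-flow allowed to be zero, and for every edge at least one of $g(e),h(e)$ is nonzero. The plan is to move every such flow to a fixed canonical flow $(g_0,0)$, where $g_0\equiv 1$ on a fixed Eulerian orientation $D_0$. Since every $\Z_{2k}\times A$-flow will be connected to the same flow, $G$ is then flow-connected.

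\textbf{Step 1: make the $\Z_{2k}$-coordinate nowhere-zero and odd.} Let $H$ be the set of edges with $g(e)$ even in $\Z_{2k}$. This is well defined, since $2k$ is even. Because $g$ is a flow and $G$ is Eulerian, the parity argument from the proof of Theorem~\ref{thm:Eulerian 2k-flow-connetced} should show that $H$ induces an Eulerian subgraph. The point is that at each vertex the number of incident edges with odd $g$-value is even, and $d(v)$ is even.

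Decompose $H$ into edge-disjoint cycles. On each cycle $C$, fix a cyclic orientation and add $(1,0)$ along $C$. Before the step, each edge of $C$ has an even first coordinate; after it, the first coordinate is odd, hence nonzero. Edges off $C$ are untouched. So every intermediate flow is nowhere-zero and each step is a legal adjacency. After all the cycles are processed, $g$ is odd on every edge.

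\textbf{Step 2: kill the $A$-coordinate.} Now $g$ is odd everywhere, so the edge values stay nonzero no matter what the $A$-coordinate does. Decompose $G$ itself into edge-disjoint cycles, which is possible since $G$ is Eulerian. On each cycle we may add any element $(0,a)$ without creating a zero.

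To reach $h=0$, I would handle $h$ itself: the $A$-flow $h$ (zeros allowed) is a sum of cycle flows, because the cycle space generates all flows. I would write $h=\sum_j a_j\chi_{C_j}$ over cycles $C_j$, say by taking a spanning tree and using fundamental cycles. Then I subtract $(0,a_j)$ along each $C_j$ in turn, which ends with $h=0$. Each step is legal, because the first coordinate is odd and untouched.

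\textbf{Step 3: reduce to the $\Z_{2k}$ case.} Now the flow is $(g,0)$ with $g$ a nowhere-zero odd $\Z_{2k}$-flow. By Lemma~\ref{Zk-k-equiv-lem}, lift $g$ to an integer $2k$-flow. From the proof of Theorem~\ref{thm:Eulerian 2k-flow-connetced}, I do not need the lift to be odd everywhere; it suffices that each step of that proof keeps the values nonzero mod $2k$. That proof uses only two kinds of moves: subtracting $1$ on Eulerian subgraphs and subtracting $2$ along cycles, so each of its steps maps to a legal $\Z_{2k}$ move. Better still, its second phase only subtracts $2$ along cycles, which preserves oddness mod $2k$.

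So I would rerun that argument directly modulo $2k$. First choose representatives in $\{1,3,\dots,2k-1\}$, reversing edges as needed. Then replace each edge by a bundle of parallel edges as in that proof and apply Lemma~\ref{flip-equiv-lem}. Every flip corresponds to adding $\pm2$ along a cycle of $G$, which preserves oddness and hence nonzeroness. This reaches the value $1$ on an Eulerian orientation, and Lemma~\ref{flip-equiv-lem} then connects any two Eulerian orientations. Throughout, the $A$-coordinate stays $0$.

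\textbf{Main obstacle.} The step needing the most care is Step 3. One must check that reconfiguration modulo $2k$ via the bundle argument never produces a zero. Using only $\pm2$ moves on odd values handles this cleanly, so I expect it to go through. A secondary check is that Step 1's even-edge set is truly Eulerian, which follows from the mod-2 reduction of the flow condition.
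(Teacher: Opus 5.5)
Your proposal is correct and follows essentially the paper's proof. You make the $\Z_{2k}$-coordinate odd by adding $1$ along the cycles of the even-valued Eulerian subgraph, cancel the $A$-coordinate by cycle-flows while oddness keeps every edge nonzero, and normalize the odd $\Z_{2k}$-coordinate to the constant $1$ on an Eulerian orientation via the bundle/flip argument of Theorem~\ref{thm:Eulerian 2k-flow-connetced} (the paper does the last two steps in the opposite order and kills the $A$-part by greedy support reduction rather than fundamental cycles, which changes nothing). In Step 3, take the representatives in $\{1,3,\dots,2k-1\}$ from the integer lift of Lemma~\ref{Zk-k-equiv-lem} after reversing negative edges, as your wording suggests: raw residues need not satisfy integer conservation, and then the bundle orientation would not be Eulerian.
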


\begin{proof}
Fix a $(\mathbb{Z}_{2k}\times A)$-flow $(D,(f_1,f_2))$ on $G$, where $f_1$ is a $\mathbb{Z}_{2k}$-flow and $f_2$ is an $A$-flow.

We first modify $f_1$. Let $H$ be the subgraph induced by the edges $e$ for which $f_1(e)$ is even. Since $G$ is Eulerian, $H$ is also Eulerian and thus decomposes into edge-disjoint cycles. Assign an arbitrary Eulerian orientation to $H$, and add $1$ along each directed cycle. This operation yields a new $\mathbb{Z}_{2k}$-flow $f'$ satisfying $f'(e)\neq 0$ for every edge $e$. As in the previous proof, $f'$ is equivalent to a constant flow $f_1'\equiv 1$ under some Eulerian orientation $D'$ of $G$.

We next modify $f_2$. Let $H_2$ be the subgraph consisting of edges $e$ with $f_2(e)\neq 0$. Every edge of $H_2$ lies on a cycle of $H_2$. While $H_2$ is nonempty, choose a cycle $C$ in $H_2$, and let $a$ be the value of $f_2$ on some edge of $C$. Orient $C$ cyclically and subtract $a$ along $C$. This operation preserves the flow condition and reduces the number of nonzero edges in $f_2$. Repeating this process, we obtain the zero flow $f_2'$ of $G$.

Consequently, the original flow $(D,(f_1,f_2))$ is equivalent to $(D',(f_1',f_2'))$, where $f_1'\equiv 1$, $f_2'\equiv 0$, and $D'$ is any Eulerian orientation of $G$. Hence, 
by Lemma~\ref{flip-equiv-lem},
all $(\mathbb{Z}_{2k}\times A)$-flows are equivalent, and $G$ is $(\mathbb{Z}_{2k}\times A)$-flow-connected.
\end{proof}

\begin{obs}
\label{eulerian-obs}
	If $G$ is an Eulerian graph, then every edge-cut in $G$ has even order.
\end{obs}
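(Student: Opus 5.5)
The plan is a parity count over the edges of the cut. Let $G$ be Eulerian and let $X\subseteq V(G)$; the edge-cut in question is $\partial(X)=[X,V(G)\setminus X]$, the set of edges with exactly one endpoint in $X$. We will show that $|\partial(X)|$ is even by computing the total degree of the vertices of $X$ in two ways.

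First, since every vertex of $G$ has even degree, $\sum_{v\in X} d_G(v)$ is even. Second, we count the edge-ends at vertices of $X$ according to the type of edge they come from. An edge with both endpoints in $X$ contributes $2$ to the sum; a loop at a vertex of $X$ also contributes $2$, under the usual convention for multigraphs. An edge of $\partial(X)$ contributes exactly $1$. Edges with no endpoint in $X$ contribute nothing. This gives
\[
\sum_{v\in X} d_G(v) = 2|E(G[X])| + |\partial(X)|.
\]
Comparing parities, $|\partial(X)| \equiv \sum_{v\in X} d_G(v) \equiv 0 \pmod 2$.

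There is no real obstacle here. The only point needing care is that the identity must hold for multigraphs with loops and parallel edges, which the paper allows. It does, since the count above is done edge by edge. Alternatively, one could take an Eulerian orientation, which the paper uses repeatedly, and note that the net out-flow across the cut is $0$. Then the numbers of cut edges leaving and entering $X$ are equal, so $|\partial(X)|$ is twice one of them. We will present the degree-sum argument as the main proof and mention the orientation argument only as a remark.
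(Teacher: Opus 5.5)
Your proof is correct, but it takes a different route from the paper's. The paper decomposes $G$ into edge-disjoint cycles, notes that each cycle enters and leaves $S$ an even number of times, and concludes that $[S,\overline{S}]$, the disjoint union of these crossings, is even.

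You instead use the counting identity $\sum_{v\in X} d_G(v) = 2|E(G[X])| + |\partial(X)|$, which holds in every multigraph. What this buys you:
\begin{itemize}
\item You need no structural input at all, not even the existence of a cycle decomposition.
\item You get the slightly more general fact that in any graph, $|\partial(X)|$ has the same parity as the degree sum over $X$. Evenness of every cut then follows at once from evenness of every degree.
\end{itemize}

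The paper's argument does rely on the cycle decomposition, but it is phrased in the cycle language used throughout the paper. It is really the statement that cycles and edge-cuts always meet in an even number of edges.

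Your closing remark via an Eulerian orientation is essentially the directed form of the paper's argument, since such an orientation comes from an Euler tour or a cycle decomposition.
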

\begin{proof}
	Since $G$ is Eulerian, $G$ decomposes into an edge-disjoint set $\mathcal{C}$ of cycles.  Consider an
	edge-cut $[S,\overline{S}]$ for some $S\subseteq V(G)$.  Each cycle $C\in \mathcal{C}$ must enter and
	leave $S$ an even number of times.  Since the cycles of $\mathcal{C}$ are edge-disjoint (with union
	equal to $E(G)$), we get that $|[S,\overline{S}]|$ is even, as claimed.
\end{proof}

As we mentioned above, the case of Conjecture~\ref{eulerian-conj} when $|A|$ is odd seems harder.  But we do know that it is true when $|A|$
is sufficiently large.

\begin{cor}
	If $G$ is an Eulerian graph and $A$ is an abelian group, then $G$ is $A$-flow-connected whenever
	$|A|\ge 62(56^2\cdot 27+27)+1=5,251,339$.
\end{cor}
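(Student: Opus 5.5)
The plan is to reduce to the 4-edge-connected case, which the footnote and introduction say is handled later: every 4-edge-connected graph is $A$-flow-connected once $|A|\ge 5{,}251{,}339$. The reduction uses contractible configurations. Small cycles, and more generally subgraphs $H$ for which $G$ is $A$-flow-connected if and only if $G/H$ is, can be contracted; Section~\ref{AFRC-sec} establishes this for $C_2$, $C_3$, $C_4$ when $|A|\ge 5$. Since reconfiguration is per component, I may assume $G$ is connected, and I induct on $|E(G)|$.

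\textbf{Step 1 (2-edge-cuts).} By Observation~\ref{eulerian-obs}, an Eulerian graph has only even edge-cuts. So if $G$ is not 4-edge-connected (and has at least 2 vertices), it has an edge-cut $[S,\overline S]$ of size exactly 2, say $e_1=s_1t_1$ and $e_2=s_2t_2$.

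\textbf{Step 2 (reduction across the cut).} Every $A$-flow on $G$ has $f(e_1)=\pm f(e_2)$ across the cut. I would form $G_1$ from $G[S]$ by adding an edge $s_1s_2$, and $G_2$ from $G[\overline S]$ by adding $t_1t_2$. Both are Eulerian and smaller, so by induction both are $A$-flow-connected.

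The task is then to show that $G$ is $A$-flow-connected when $G_1$ and $G_2$ are. An equivalent viewpoint is that $G$ is obtained from $G_1$ by replacing the edge $s_1s_2$ with a path through $G_2$.

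\textbf{Step 3 (lifting moves).} Given a flow $f$ on $G$, restrict it to $f|_1$ on $G_1$ and $f|_2$ on $G_2$, with the new edge carrying the cut value. A reconfiguration step in $G_1$ along a cycle $C$ that avoids the new edge lifts to $G$ directly. If $C$ uses $s_1s_2$, it changes the cut value from $a$ to $a'$. To lift this, I need a path in $G_2$ from $t_1$ to $t_2$ along which I can simultaneously change values compatibly. Equivalently, I would first move $f|_2$ within $\F(G_2,A)$ to a flow whose value on $t_1t_2$ equals $a'$. That move is legitimate in $G_2$ only if flows with the prescribed value on the special edge are reachable. Then the combined cycle ($C$ minus the new edge, plus a $t_1t_2$-path) performs the change.

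\textbf{Main obstacle.} The hard step is this synchronization: ensuring that in $G_2$ the value on the special edge can be changed without going through configurations that break compatibility with $G_1$. I would try to strengthen the induction hypothesis to this statement: for every edge $e$ of an Eulerian graph, and any two flows, there is a reconfiguration sequence whose intermediate flows fix the value at $e$ except in steps that change only a cycle through $e$.

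If that proves awkward, the fallback is to contract instead. Contract $G[\overline S]$ to a single vertex, apply induction on the smaller Eulerian graph, and handle the 2-edge-connected pieces via the cycle-contractibility lemmas, reducing to the 4-edge-connected theorem. The base cases are then the 4-edge-connected Eulerian graphs and the trivial graphs with a single vertex (loops only).
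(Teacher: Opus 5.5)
Your Step~1 is correct and is exactly where the paper starts: by Observation~\ref{eulerian-obs}, a connected Eulerian graph that is not $4$-edge-connected has a $2$-edge-cut $\{e_1,e_2\}$. The gap is Step~3. Splitting $G$ into $G_1=G[S]+s_1s_2$ and $G_2=G[\overline{S}]+t_1t_2$ only helps if you prove a gluing lemma: that connectivity of $\F(G_1,A)$ and $\F(G_2,A)$ implies connectivity of $\F(G,A)$. You do not prove it.

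To see what is missing, take a step in $G_1$ along a cycle through $s_1s_2$ with increment $\delta$. It must be mirrored by a step of the current flow on $G_2$, along some cycle through $t_1t_2$, with the same increment $\delta$ and creating no zero. Nothing guarantees such a cycle exists. Nor can you first move the $G_2$-flow to a convenient one while holding the value on $t_1t_2$ fixed: connectivity of $\F(G_2,A)$ says nothing about connectivity of the set of flows with a prescribed value on that edge.

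Your proposed strengthening does not help. Any step that changes the value on $e$ is automatically a step along a cycle through $e$, so the strengthened statement is just connectivity again. The fallback fails for the same reason. To lift from $G/G[\overline{S}]$ back to $G$, you would need $G[\overline{S}]$ to satisfy the $A$-FRC conditions (ii) and (iii), which are precisely this synchronization. Mere $A$-connectedness (Theorem~\ref{AFRC-thm}\ref{AFRC-thm-1}) gives only the opposite implication. The results on $C_2,C_3,C_4$ play no role here.

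The missing idea is that no splitting is needed. Orient $e_1$ from $S$ to $\overline{S}$ and $e_2$ from $\overline{S}$ to $S$; summing conservation over $S$ gives $f(e_1)=f(e_2)$ for every flow. Now contract only $e_2$. The graph $G/e_2$ is still Eulerian, since the merged vertex has degree $d(s_2)+d(t_2)-2$, and it has fewer vertices. Moreover $\F(G,A)\cong\F(G/e_2,A)$, for two reasons:
\begin{itemize}
\item Restriction is a bijection on nowhere-zero flows, because the value on $e_2$ is forced to equal the value on $e_1$.
\item Every cycle of $G$ meets the cut in both or neither of $e_1,e_2$. So cycles of $G/e_2$ through $e_1$ are exactly the images of cycles of $G$ through $e_1$ and $e_2$, and all other cycles correspond directly.
\end{itemize}
Hence every reconfiguration sequence in $G/e_2$ lifts verbatim: give $e_2$ the value of $e_1$ at each step. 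Induction on $|V(G)|$ then finishes the proof, with the $4$-edge-connected base case from Theorem~\ref{4-edge-connected-thm} and disconnected graphs handled componentwise. No synchronization issue ever arises.
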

\begin{proof}
	Our proof is by induction on $|V(G)|$, and our base case is when $G$ is 4-edge-connected, which is
	handled by Theorem~\ref{4-edge-connected-thm}.  So assume that $G$ is not 4-edge-connected.
	If $G$ is disconnected, then we finish by the induction hypothesis on each component.  So we instead
	assume that $G$ is connected.  By Observation~\ref{eulerian-obs}, every edge-cut of $G$ must be
	even; thus, $G$ has a 2-edge-cut $\{e_1,e_2\}$.  Form $G'$ from $G$ by contracting $e_2$.
	By the induction hypothesis, the result holds for $G'$.  Now we lift any reconfiguration sequence
	in $G'$ to a corresponding sequence in $G$: we ``uncontract'' $e_2$, orient $e_2$ oppositely from
	$e_1$, and at each step give $e_2$ the same flow value as $e_1$.  It is easy to check that this yields
	a valid flow reconfiguration sequence in $G$, which finishes the proof.
\end{proof}

\begin{obs}
For each abelian group $A$ with $|A|\ge 5$, every Eulerian graph is $A$-flow-connected if and only if every $4$-regular graph is $A$-flow-connected.
\end{obs}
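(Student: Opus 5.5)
One direction is immediate, since every $4$-regular graph is Eulerian. For the converse, I would assume every $4$-regular graph is $A$-flow-connected and reduce an arbitrary Eulerian graph $G$ to a $4$-regular graph. The reduction is vertex splitting, combined with the reducibility tools of Section~\ref{AFRC-sec}: I will use that $C_2$, $C_3$ and $C_4$ are contractible for $A$-flow-connectedness when $|A|\ge 5$, as announced in the footnote at the start of this section. I argue by induction on $\sum_v |d(v)-4|$ plus $|V(G)|$. Components are handled separately, and isolated vertices are trivial.

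\textbf{Vertices of degree $2$.} Let $v$ have degree $2$, with neighbours $u$ and $w$. Suppressing $v$ gives an Eulerian graph $G'$ whose flows and reconfiguration sequences correspond bijectively to those of $G$: the two edges at $v$ carry the same value, and a cycle through one of them passes through both. So $G$ is $A$-flow-connected if and only if $G'$ is. If $u=w$, then $v$ is incident only to a digon or a loop-type configuration, which is handled directly or by contracting the $C_2$. In this way all degree-$2$ vertices are removed.

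\textbf{Vertices of degree at least $6$.} Let $v$ have degree $2m\ge 6$. I replace $v$ by a small gadget $H$ of degree-$4$ vertices, for example a cycle $v_1\cdots v_{m-1}$, with each $v_i$ receiving the appropriate number of original edges so that each $v_i$ has degree $4$. Contracting $H$ back to a single vertex returns $G$. If $H$ is a cycle of length at most $4$, or can be contracted step by step through $C_2$, $C_3$ and $C_4$ contractions, then reducibility gives that $G$ is $A$-flow-connected if and only if the split graph is. Concretely, I would detach the edges at $v$ two at a time: a vertex of degree $2m$ becomes a vertex of degree $2m-2$ joined by a digon ($C_2$) to a new vertex carrying two original edges. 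Each such step is a single $C_2$-contraction. Both resulting vertices have smaller excess, and the new vertex has degree $4$. Iterating this produces a $4$-regular graph (after the degree-$2$ suppressions), to which the hypothesis applies. Pulling back through the chain of reductions then shows that $G$ is $A$-flow-connected.

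\textbf{Main obstacle.} The delicate point is making sure the reducibility lemma for $C_2$ applies exactly as stated in Section~\ref{AFRC-sec}. That lemma says $G$ is $A$-flow-connected iff $G/H$ is, which is the direction needed here: the split graph $G_1$ satisfies $G_1/C_2=G$. I therefore need that contracting the digon yields precisely $G$ and that the statement allows multigraphs. A second point is that suppressing degree-$2$ vertices may create loops or digons. Loops carry arbitrary nonzero values and flip freely, so they are harmless for $|A|\ge 2$, and they can be deleted without affecting connectedness. This is where I would spend care, checking it directly from the definition of $A$-flow-adjacency.
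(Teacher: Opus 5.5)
\textbf{The step for vertices of degree at least $6$ fails.} When you detach two original edges at $v$ onto a new vertex $v'$ joined to the rest of $v$ by a digon, the remaining vertex keeps $2m-2$ original edges \emph{plus} the two digon edges. Its degree is therefore still $2m$, not $2m-2$. So the claim that both vertices have smaller excess is false, and your potential $\sum_v|d(v)-4|+|V(G)|$ actually increases at each step.

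A better choice of which edges to detach cannot repair this. Splitting a vertex along a $C_2$ adds one vertex and four edge-ends, so $\sum_v(d(v)-4)$ is invariant under such splits. Hence no sequence of digon splits can turn a graph with minimum degree at least $4$ and some vertex of degree at least $6$ into a $4$-regular graph.

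Your alternative gadget does not help either. A cycle $v_1\cdots v_{m-1}$ has room for only $2(m-1)$ of the $2m$ original edges. The correctly sized gadget $C_m$ is $A$-connected only when $|A|\ge m+1$, which fails for $|A|=5$ once $d(v)\ge 10$.

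\textbf{The fix is to split along a triangle,} which is what the paper does. Replace $v$ by a triangle $v_1v_2v_3$, attach two original edges to each of $v_1$ and $v_2$, and attach the remaining $d(v)-4$ to $v_3$. Then $d(v_1)=d(v_2)=4$ and $d(v_3)=d(v)-2\ge 4$, so the excess drops by $2$ and no degree-$2$ vertices are created. In general, splitting along $C_k$ changes $\sum_v(d(v)-4)$ by $-(2k-4)$, so $k\ge 3$ is exactly what is needed.

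Contracting the triangle returns $G$. Since $C_3$ is $A$-FRC for $|A|\ge 5$ (Corollary~\ref{AFRC-cor}\ref{C_3}), flow-connectedness of the split graph passes to $G$. In fact mere $A$-connectedness suffices for the direction you need, by Theorem~\ref{AFRC-thm}\ref{AFRC-thm-1}.

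With this replacement, the rest of your plan is fine: suppressing degree-$2$ vertices, deleting loops, and contracting digons. Together these play the role of the paper's preliminary reduction along $2$-edge-cuts.
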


\begin{proof}
It suffices to consider $4$-edge-connected graphs, since any $2$-edge-cut $\{e_1,e_2\}$ can be reduced by contracting one of the two edges, without affecting $A$-flow-connectedness.

We now reduce an Eulerian graph to a $4$-regular graph. Let $v$ be a vertex with $d(v)\ge 6$. 
Since $C_3$ is $A$-flow-reconfiguration-contractible (or $A$-FRC for short) whenever $|A|\ge 5$ by Corollary~\ref{AFRC-cor}\ref{C_3}, we can freely replace $v$ by a triangle without changing $A$-flow-connectedness. We replace $v$ by a triangle $v_1v_2v_3$ as follows. Partition the edges incident with $v$ into three nonempty sets $E_1,E_2,E_3$, where $|E_1|=|E_2|=2$ and $|E_3|=d(v)-4$. Assign the edges in $E_i$ to the vertex $v_i$ in the triangle for each $i\in\{1,2,3\}$, and make incident to $v_i$ each edge in $E_i$. Thus, $d(v_1)=d(v_2)=4$, while $d(v_3)=d(v)-2$.

Repeating this operation at each vertex of degree at least $6$, we eventually obtain a $4$-regular graph $G'$. Each replacement uses a $C_3$; since $C_3$ is $A$-FRC, each step preserves $A$-flow-connectedness. Thus, $G$ is $A$-flow-connected if and only if $G'$ is $A$-flow-connected.
\end{proof}

\begin{theorem}
Let $G$ be a $4$-regular graph, and let $A$ be an abelian group with $|A|\ge 5$. There exists a cubic bipartite graph $G'$ constructed from $G$ by a local expansion such that if $G'$ is $A$-flow-connected, then $G$ is also $A$-flow-connected.
\end{theorem}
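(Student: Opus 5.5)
We plan to construct $G'$ by a local expansion at every vertex, and then to lift reconfiguration sequences from $G'$ back to $G$ by contracting the gadgets. We expect to use the $A$-flow-reconfiguration-contractible (FRC) machinery of Section~\ref{AFRC-sec}, in particular Corollary~\ref{AFRC-cor}, which tells us that $C_2$, $C_3$ and $C_4$ are $A$-FRC whenever $|A|\ge 5$.

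\textbf{Step 1: the expansion.} We first fix an Eulerian orientation $D$ of $G$, so every vertex has two in-arcs and two out-arcs. We then form $G_1$ by replacing each vertex $v$ with a $4$-cycle $v_1v_2v_3v_4$. The four arcs at $v$ are attached one to each $v_i$, in the cyclic order in, out, in, out. The resulting graph $G_1$ is cubic. Every original edge now joins a vertex $v_i$ that is the tail of an out-arc at $v$ to a vertex $u_j$ that is the head of an in-arc at $u$.

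We bipartition $V(G_1)$ by declaring a gadget vertex \emph{black} if its external edge is an out-arc, and \emph{white} otherwise. The gadget $4$-cycles alternate black and white. Each original edge joins a black vertex to a white vertex, since it is an out-arc at its tail and an in-arc at its head. Hence $G':=G_1$ is cubic and bipartite.

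\textbf{Step 2: lifting.} Contracting each gadget $4$-cycle of $G'$ returns $G$, so $G'/\{\text{gadgets}\}=G$. Since $C_4$ is $A$-FRC for $|A|\ge 5$, contracting one $C_4$ preserves $A$-flow-connectedness in the direction we need. We therefore contract the gadgets one at a time, in any order, and conclude that $G$ is $A$-flow-connected whenever $G'$ is.

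\textbf{Main obstacle.} The difficulty is to check that the contractibility statement really gives the implication ``$G'$ connected $\Rightarrow$ $G'/H$ connected'' for a $4$-cycle $H$ whose four attachment vertices each carry exactly one external edge. Concretely, one must show that every $A$-flow on $G'/H$ extends to an $A$-flow on $G'$, and that every reconfiguration step of $G'$ projects to a reconfiguration step of $G'/H$ or to no change at all.

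The extension is where $|A|\ge 5$ enters. The four cycle edges are determined by one free parameter $a\in A$. Each cycle edge forbids at most one value of $a$, so at most $4<|A|$ values are excluded and a valid $a$ exists. For the projection, a cycle of $G'$ maps in $G'/H$ either to a cycle, to a union of two cycles through the contracted vertex (when it passes through $H$ twice), or to nothing. In the two-cycle case we split the step into two successive reconfiguration steps, exactly as in the proof of Lemma~\ref{flip-equiv-lem}. If the two-cycle case causes trouble, we will rely on the fact that Corollary~\ref{AFRC-cor} is proved precisely for this kind of contraction and simply invoke it.
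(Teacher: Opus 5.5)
Your construction is the same as the paper's: an Eulerian orientation of $G$, with the outgoing edges at $v$ attached to the black vertices of $C_v$ and the incoming edges to the white ones. Your lifting argument is also essentially the paper's. The paper cites Theorem~\ref{AFRC-thm}\ref{AFRC-thm-1}, whose proof is the extension-plus-projection argument you sketch. Where the paper works with even-subgraph adjacency in $\mathcal{F}^*$, you split the closed trail through $v_H$ into two edge-disjoint cycle steps. That is equally valid, because after the first step every edge carries either its old value or its new value, and both are nonzero.

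One correction: the justification in your Step~2 is false as stated. $C_4$ is not $A$-FRC for every $|A|\ge 5$. By Corollary~\ref{cor:universal-sufficient}, $C_4$ is $\Z_k$-FRC only when $k\ge 7$, so it fails for $\Z_5$ and $\Z_6$. Moreover, Corollary~\ref{AFRC-cor} says nothing about $C_4$, so your fallback of ``simply invoking'' it is not available.

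Fortunately, the direction you need ($G'$ connected implies $G'/H$ connected) only requires $C_4$ to be $A$-connected. Your count of at most $4<|A|$ forbidden values for the free parameter establishes exactly that. Replace ``$A$-FRC'' by ``$A$-connected'' and cite Theorem~\ref{AFRC-thm}\ref{AFRC-thm-1} (or keep your direct argument); the proof is then complete.
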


\begin{proof}
First, observe that $G$ admits an orientation $D$ such that each vertex $v$ satisfies $d^+_D(v)=d^-_D(v)=2$.

We construct a cubic bipartite graph $G'$ from $G$ by replacing each vertex $v$ with a $4$-cycle $C_v$. Each cycle $C_v$ is given a fixed bipartition via a proper $2$-coloring, in which the vertices alternate between black and white along the cycle.

We distribute the four edges originally incident with $v$ according to the chosen orientation $D$: the two outgoing edges at $v$ are attached to distinct black vertices of $C_v$, and the two incoming edges are attached to distinct white vertices of $C_v$, so that each vertex of $C_v$ is incident with exactly one original edge. This is feasible since $d^+_D(v)=d^-_D(v)=2$.

\begin{figure}[!h]
\centering
\begin{tikzpicture}[thick, scale=1]
\tikzstyle{uStyle}=[shape = circle, minimum size = 5pt, inner sep = 0pt,
outer sep = 0pt, draw, fill=white, semithick]
\tikzstyle{bStyle}=[shape = circle, minimum size = 5pt, inner sep = 0pt,
outer sep = 0pt, draw, fill=black, semithick]
\tikzstyle{eStyle}=[shape = circle, minimum size = 5pt, inner sep = 0pt,
outer sep = 0pt, draw=none, fill=none, semithick]
\tikzset{every node/.style=uStyle}

\draw (0,0)   node[uStyle] (v) {};
\draw (v)     ++(0,-.25) node[eStyle] {\footnotesize{$v$}};
\draw (1,1)   node[eStyle] (E) {};
\draw (1,-1)  node[eStyle] (F) {};
\draw (-1,-1) node[eStyle] (G) {};
\draw (-1,1)  node[eStyle] (H) {};

\foreach \from/\to in {v/E, F/v, v/G, H/v}
	\draw[shorten <=2pt, shorten >=2pt, lmid arrow] (\from) -- (\to);

\begin{scope}[xshift = 4cm]
\draw (.5,.5)   node[bStyle] (A) {};
\draw (.5,-.5)  node[uStyle] (B) {};
\draw (-.5,-.5) node[bStyle] (C) {};
\draw (-.5,.5)  node[uStyle] (D) {};
\draw (1,1)   node[eStyle] (E) {};
\draw (1,-1)  node[eStyle] (F) {};
\draw (-1,-1) node[eStyle] (G) {};
\draw (-1,1)  node[eStyle] (H) {};
\draw (0,0)   node[eStyle]  {\footnotesize{$C_v$}};

\foreach \from/\to in {A/E, A/B, A/D, C/B, F/B, C/G, C/D, H/D}
	\draw[shorten <=2pt, shorten >=2pt, mid arrow] (\from) -- (\to);
\end{scope}

\end{tikzpicture}
	\captionsetup{width=.45\textwidth}
	\caption{A 4-vertex $v$ on the left, and the cycle $C_v$ that replaces $v$ on the right.}
\end{figure}

Next, we orient the edges of each $4$-cycle from black vertices to white vertices. Under this construction, every edge in the resulting graph $G'$ is directed from a black vertex to a white vertex, so $G'$ is cubic and bipartite.

Finally, since each expanded $4$-cycle is $A$-connected (that is, $C_4$ admits a $\beta$-flow for any $A$-boundary $\beta$) whenever $|A|\ge 5$, it follows from Theorem~\ref{AFRC-thm}\ref{AFRC-thm-1} that $A$-flow-connectedness of $G'$ implies $A$-flow-connectedness of $G$.
\end{proof}

Thus, for each odd $m\ge 5$, to prove that every Eulerian graph is $\mathbb{Z}_m$-flow-connected, it suffices to prove that every bridgeless cubic bipartite graph is $\mathbb{Z}_m$-flow-connected.

\section{\texorpdfstring{$\bm{A}$}{A}-Flow-Reconfiguration-Contractible Subgraphs}
\label{AFRC-sec}
In this section, we define a notion of contractible subgraphs $H$ for the property of a graph $G$ being $A$-flow-connected.
That is, $\F(G,A)$ is connected if and only if $\F(G/H,A)$ is.
Such a subgraph $H$ is called
\emph{$A$-flow-reconfiguration-contractible}, or \emph{$A$-FRC} for short; see Definition~\ref{$A$-FRC-def}.
Proving this reducibility is somewhat technical.  And because we handle all finite abelian groups $A$, not just $\Z_k$, we need to delve a bit more into group theory.

Our key lemma (Theorem~\ref{thm:cycle-FRC-exponent-threshold}) has three important consequences. First, we show that for each finite abelian group $A$, all $2$-edge-connected graphs are $A$-flow-connected if and only if this is true for all $2$-edge-connected \emph{cubic} graphs. (This mirrors many proofs of the $6$-Flow Theorem, which begin by reducing the problem to cubic graphs.) Second, we prove that each graph $G$ is $A$-flow-connected whenever $|A|=\Omega(\lg |G|)$. 
Third, we completely characterize the $A$-flow-connectedness of each graph containing a spanning triangularly connected subgraph, for each group $A$. To show the value of this lemma, we first prove these consequences in Section~\ref{sec4.1}, and later prove the core results in Section~\ref{sec4.2}.

\subsection{Consequences of \texorpdfstring{$\bm{A}$}{A}-Flow-Reduciblity}
\label{sec4.1}

A standard first step when proving a nowhere-zero flow theorem is showing that it suffices to prove it for $3$-edge-connected cubic graphs. \textbf{By using the results of the next section 
(Section~\ref{sec4.2}), we prove an analogous result for $\bm{A}$-flow-connectedness.}

\begin{theorem}\label{reduce-to-cubic-lem}
Fix an abelian group $A$ with $|A|\ge 5$. If there exists a $2$-edge-connected (multi)graph that is not $A$-flow-connected, then there exists a simple, 2-connected, $3$-edge-connected cubic graph $G$ that is not $A$-flow-connected.
\end{theorem}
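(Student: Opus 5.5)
We argue by taking a minimal counterexample and applying the reductions from Section~\ref{sec4.2}, chiefly Theorem~\ref{AFRC-thm} and Corollary~\ref{AFRC-cor}. Among all $2$-edge-connected multigraphs that are not $A$-flow-connected, choose $G$ minimizing $|E(G)|+|V(G)|$. We then show that $G$ must be simple, $2$-connected, $3$-edge-connected and cubic. Each property is proved by exhibiting, whenever it fails, a smaller $2$-edge-connected multigraph that is also not $A$-flow-connected. By the reducibility results, a smaller graph obtained from $G$ by contracting an $A$-FRC subgraph, or by splitting or lifting edges in a way that preserves $A$-flow-connectedness, inherits non-$A$-flow-connectedness.

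\textbf{Step 1: $3$-edge-connectivity.} Suppose $G$ has a $2$-edge-cut $\{e_1,e_2\}$. As in the proof of the corollary on Eulerian graphs, in every $A$-flow the values on $e_1,e_2$ are tied: with suitable orientation they are equal. So $\F(G,A)\cong\F(G/e_2,A)$, since any reconfiguration sequence lifts and projects through the contraction. Hence $G/e_2$ is a smaller counterexample, and it is still $2$-edge-connected.

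\textbf{Step 2: no small cycles, hence simple.} Parallel edges form a $C_2$, and by Corollary~\ref{AFRC-cor} the cycles $C_2$ (and $C_3$) are $A$-FRC when $|A|\ge 5$. So if $G$ has a $2$-cycle $H$, then $G/H$ is not $A$-flow-connected. If $G/H$ were a single vertex with loops, we would delete the loops and observe that loops do not affect connectedness of $\F$. Otherwise $G/H$ is a smaller $2$-edge-connected counterexample. Loops in $G$ can be removed in the same way: a loop carries any nonzero value independently and can be reconfigured on its own. Thus $G$ is loopless and simple.

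\textbf{Step 3: cubic and $2$-connected.} Degree $1$ is impossible and degree $2$ contradicts $3$-edge-connectivity, so the remaining case is a vertex $v$ with $d(v)\ge 4$. Following the proof that reduces Eulerian graphs to $4$-regular ones, we replace $v$ by a triangle $v_1v_2v_3$, distributing the edges at $v$ so that $d(v_1)=3$ and the rest go to $v_2,v_3$ with degrees at least $3$. Since $C_3$ is $A$-FRC, the new graph $G'$ satisfies $G'/C_3=G$, so $G'$ is not $A$-flow-connected. The difficulty is that $G'$ is larger, so it is not a contradiction to minimality. I would therefore choose the potential $\sum_v \max(0,d(v)-3)$ as the primary minimization parameter instead, with $|E|+|V|$ as secondary. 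Blowing up reduces this potential by one, while the contractions in Steps~1--2 do not increase it. I must check that the edge distribution can be chosen so that $G'$ stays $3$-edge-connected (or at least $2$-edge-connected, after which Step~1 repairs it without raising the potential).

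Finally, a $3$-edge-connected cubic graph is $2$-connected, because in a cubic graph vertex- and edge-connectivity coincide. I expect the main obstacle to be setting up this potential-based induction consistently, that is, ensuring that every reduction step either strictly decreases the potential or keeps it fixed while shrinking the graph. A second concern is justifying that the FRC property applies in both directions, so that non-connectedness of $G'/H$ really transfers from $G'$.
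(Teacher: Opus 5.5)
Your Steps 1--2 match the paper's reductions. Step 3, however, has a genuine gap: the triangle blow-up does not decrease $\Phi(G)=\sum_v\max\{0,d(v)-3\}$.

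\textbf{Why the triangle fails.} Suppose $v_1,v_2,v_3$ receive $k_1,k_2,k_3\ge 1$ of the $d=d(v)$ original edges. Then $d(v_i)=k_i+2$, so the new excess is $\sum_i (k_i-1)=d-3$. This is exactly the excess of $v$ that you removed. Meanwhile $|E|+|V|$ increases, so your lexicographic minimality is not contradicted. Worse, for $d(v)=4$ the degrees are forced to be $3,3,4$, so no sequence of triangle blow-ups can ever eliminate a degree-$4$ vertex.

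\textbf{The missing idea.} Blow $v$ up into a $4$-cycle $v_1v_2v_3v_4$. Give one original edge to each of $v_1,v_2,v_3$ and the remaining $d-3$ edges to $v_4$. The new degrees are $3,3,3,d-1$, so $\Phi$ drops by exactly one. You do not need $C_4$ to be $A$-FRC; in fact it is not for $|A|\in\{5,6\}$. Theorem~\ref{AFRC-thm}\ref{AFRC-thm-1} only requires $H$ to be $A$-connected. That holds for $C_4$ when $|A|\ge 5$, because a $\beta$-flow on $C_4$ is determined by one value that must avoid at most $4$ elements. The contrapositive of that theorem, applied with $G'/C_4=G$, is exactly the direction you need, so your ``second concern'' is settled too.

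\textbf{A second, smaller problem.} You claim the Step~1 contraction does not increase the potential, but this is false. Contracting an edge $xy$ of a $2$-edge-cut with $d(x)=d(y)=3$ creates a vertex of degree $4$. So Step~1 cannot serve as a ``repair'' under your lexicographic order.

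The paper avoids any repair with a two-stage argument. First it minimizes $|E(G)|$ to show that the class of simple, $2$-connected, $3$-edge-connected counterexamples is nonempty. This uses passing to a block, contracting a $2K_2$, and contracting an edge of a $2$-edge-cut. Then it minimizes $\Phi$ only within that class and checks directly that the $C_4$-expansion stays in the class.

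That check uses $2$-connectivity of $G$ at $v$. Suppose an edge-cut of $G'$ splits the new $4$-cycle and contains only the two cycle edges. Then $v$ would be a cut vertex of $G$, since every $v_i$ has an external edge. Without $2$-connectivity, a careless distribution of edges at a cut vertex really can create a $2$-edge-cut. So this hypothesis must be secured before the $\Phi$-minimization, not recovered afterwards from cubicity.
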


\begin{proof}
    Assume the theorem is false. Let $\mathcal{G}$ 
    be the family of all 
    (multi)graphs that are
    2-edge-connected 
    but not $A$-flow-connected. We first choose a graph $G \in \mathcal{G}$ to minimize the number of edges $|E(G)|$. 
    Note that every graph is $A$-flow-connected if and only if all of its blocks are (since each cycle in $G$ is contained in exactly one of its blocks). Thus, we assume that $G$ is 2-connected. 
    
    First, suppose that $G$ has an edge $e$ of multiplicity at least 2. Form $G'$ from $G$ by contracting $e$. By Corollary~\ref{AFRC-cor}\ref{2K_2}, the multigraph $2K_2$ is $A$-FRC. By Theorem~\ref{AFRC-thm}(1), since $G$ is not $A$-flow-connected, neither is $G'$. Since $|E(G')|<|E(G)|$, this contradicts our minimal choice of $G$. Hence, $G$ must be a simple graph.

    Next, suppose that $G$ contains an edge-cut $\{e_1,e_2\}$. Fix an orientation of $G$ with $e_1$ and $e_2$ oriented toward opposite components of $G-\{e_1,e_2\}$. By flow conservation, every $A$-flow $f$ on $G$ must satisfy $f(e_1)=f(e_2)$. Form $G'$ from $G$ by contracting $e_2$. It is easy to check that $\mathcal{F}(G',A)\cong \mathcal{F}(G,A)$. Since $G$ is not $A$-flow-connected, neither is $G'$, which again contradicts the minimality of $G$. Thus, $G$ is 3-edge-connected.  

    Now, among all simple, 2-connected, and 3-edge-connected graphs in $\mathcal{G}$, we choose $G$ to minimize the excess degree sum $\Phi(G) = \sum_{v\in V(G)}\max\{d(v)-3, 0\}$. 
    (Our choice of $G$ here is independent of our choice of $G$ above.  But the arguments above prove that the set of graphs that we are minimizing over is non-empty.)
    We claim that $G$ must be cubic, meaning $\Phi(G) = 0$. Suppose, for a contradiction, that $G$ has a vertex $v$ with $d(v) \ge 4$. Let $d:=d(v)$.  We construct a new graph $G'$ from $G$ by deleting $v$ and introducing a 4-cycle $C_4$ on new vertices $v_1, v_2, v_3, v_4$. We distribute the $d$ original edges incident to $v$ by assigning exactly one edge to each of $v_1, v_2, v_3$, and the remaining $d-3$ edges to $v_4$. 

In this new graph $G'$, the degrees of $v_1, v_2, v_3, v_4$ are $3, 3, 3$, and $d-1$, respectively. Since $|A| \ge 5$, the cycle $C_4$ is $A$-connected. By Theorem~\ref{AFRC-thm}\ref{AFRC-thm-1}, if $G'$ were $A$-flow-connected, then $G$ would be as well, which is false. Thus $G' \in \mathcal{G}$. It is straightforward to see that this vertex expansion preserves simplicity, 2-connectedness, and 3-edge-connectedness. However, the contribution to the excess degree sum from $v$ (which was $d-3$) is now replaced by the contribution from $v_4$ (which is $d-4$), yielding $\Phi(G') = \Phi(G) - 1$. This contradicts the minimality of $G$. Hence, $G$ must be cubic.
\end{proof}

In fact, something slightly stronger is true.
\begin{obs}
The hypothesis $|A|\ge 5$ in Theorem~\ref{reduce-to-cubic-lem} can be removed.
\end{obs}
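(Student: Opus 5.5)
The plan is to bypass the reduction machinery for small groups and exhibit explicit cubic witnesses. For $|A|\ge 5$ the statement is exactly Theorem~\ref{reduce-to-cubic-lem}, so only $|A|\le 4$ remains. In each such case I would either show the hypothesis never holds, or name one simple, $2$-connected, $3$-edge-connected cubic graph that is not $A$-flow-connected. Either way the implication in Theorem~\ref{reduce-to-cubic-lem} then holds trivially.

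\emph{Cases $|A|\le 2$.} If $|A|=1$, no graph with an edge has a nowhere-zero $A$-flow, so every graph is $A$-flow-connected and the hypothesis fails. If $|A|=2$, so $A=\Z_2$, every nowhere-zero $\Z_2$-flow is the constant flow $1$, and orientation is irrelevant since $1=-1$. So each graph has at most one $\Z_2$-flow, every graph is $\Z_2$-flow-connected, and again the hypothesis fails.

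\emph{Cases $|A|=3,4$.} I would use $K_{3,3}$ for $\Z_3$ and $\Z_2\times\Z_2$, and $K_4$ for $\Z_4$; both are simple, $3$-connected and cubic.
\begin{itemize}
\item For $A=\Z_3$: orienting every edge of $K_{3,3}$ from one side to the other gives out-degree minus in-degree equal to $\pm 3\equiv 0\pmod 3$ at every vertex, so $K_{3,3}$ has a $\Z_3$-flow. Since $K_{3,3}$ is not Eulerian, Theorem~\ref{mod-ori-conn-thm} (with $k=1$) and Observation~\ref{Z3-obs} show it is not $\Z_3$-flow-connected.
\item For $A=\Z_4$: $K_4$ is $3$-edge-colorable and contains a triangle, so Theorem~\ref{notZ4flow-connectedII} shows it is not $\Z_4$-flow-connected.
\end{itemize}

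\emph{Case $A=\Z_2\times\Z_2$.} This is the main obstacle, since $K_4$ fails here by the remark of Esperet et al. The approach is to turn flows into edge-colorings. In a cubic graph, a nowhere-zero $\Z_2^2$-flow (orientation irrelevant) assigns to the three edges at each vertex three distinct nonzero values summing to $0$. So these flows are exactly the proper $3$-edge-colorings with colors in $\Z_2^2\setminus\{0\}$.

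Next I would describe a single reconfiguration step. Suppose we add $c\ne 0$ along a cycle $C$. Then $C$ must avoid color $c$, so at each vertex of $C$ its two edges carry the other two colors $a,b$. Hence $C$ is a cycle alternating in $a,b$, and the step swaps $a$ and $b$ on $C$; that is, it is a Kempe switch on a whole $\{a,b\}$-component.

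In $K_{3,3}$, the union of any two color classes of a $3$-edge-coloring is a Hamilton $6$-cycle. This holds because a $3$-edge-coloring is a Latin square of order $3$, and all of these are cyclic. Hence every allowed step merely permutes color names, and the unordered partition of $E(K_{3,3})$ into three perfect matchings is invariant. $K_{3,3}$ has two distinct $1$-factorizations (its $12$ labeled colorings split as $2\times 3!$), so flows coming from different $1$-factorizations lie in different components of $\F(K_{3,3},\Z_2^2)$.

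The step I would double-check most carefully is this last one: that both $1$-factorizations really occur, and that each bicolored subgraph is a single $6$-cycle in both of them. The latter holds because each is a union of two disjoint perfect matchings of $K_{3,3}$ with no common edge, which forces a Hamilton cycle, since a $4$-cycle plus a $2$-cycle is impossible in a simple graph.
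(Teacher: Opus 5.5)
Your proof is correct. Its skeleton matches the paper's: treat $|A|\le 2$ as trivial, and exhibit a simple, $3$-connected cubic witness for each group of order $3$ or $4$. The differences lie in the witnesses and how they are justified.

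\textbf{$\Z_2\times\Z_2$.} The paper names no specific graph. It combines Esperet et al.'s theorem (for $3$-edge-colorable cubic $G$, $\F(G,\Z_2\times\Z_2)$ is connected if and only if all proper $3$-edge-colorings are Kempe-equivalent) with belcastro and Haas's $3$-connected cubic graphs having many Kempe classes. You instead derive the needed direction of that characterization directly: adding $c$ along a cycle is valid only as a switch on a whole $\{a,b\}$-chain. You then note that in $K_{3,3}$ every such chain is a Hamilton $6$-cycle, so the $1$-factorization is invariant, and $K_{3,3}$ has two $1$-factorizations. Your argument is elementary and self-contained. The paper's route yields more (arbitrarily many components), but only by citation.

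\textbf{$\Z_4$.} You reach the same witness $K_4$, via Theorem~\ref{notZ4flow-connectedII} rather than the remark of Esperet et al.

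\textbf{$\Z_3$.} Choosing $K_{3,3}$ explicitly and checking that it admits a $\Z_3$-flow is more careful than the paper's assertion that $\F(G,\Z_3)$ is disconnected for every cubic $G$. A non-bipartite cubic graph has no $\Z_3$-flow, so under the paper's convention it counts as $\Z_3$-flow-connected.

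You also handle the trivial group, which the paper omits.
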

\begin{proof} 
To justify this observation, we simply consider each abelian group $A$ with $|A|\le 4$. When $A = \mathbb{Z}_2$, every graph $G$ has at most one nowhere-zero $A$-flow, so $\mathcal{F}(G,A)$ is trivial. When $A = \mathbb{Z}_3$, Theorem~\ref{cor:3-flow} shows that $\mathcal{F}(G,\mathbb{Z}_3)$ is connected only if $G$ is Eulerian. Thus, for any cubic graph $G$, $\mathcal{F}(G,\mathbb{Z}_3)$ is disconnected. For $A = \mathbb{Z}_4$, Esperet et al.~\cite[Section~2.4]{esperet-et-al} noted that $\mathcal{F}(K_4,\mathbb{Z}_4)$ is disconnected. In fact, $K_4$ is the first in an infinite class of 3-connected cubic graphs that are not $\mathbb{Z}_4$-flow-connected.

Similar results hold for the Klein four-group, $\Z_2\times\Z_2$. 
For each 3-edge-colorable cubic graph $G$, Esperet et al.~\cite[Theorem~4.3]{esperet-et-al} proved that $\mathcal{F}(G,\mathbb{Z}_2\times\mathbb{Z}_2)$ is connected if and only if all proper 3-edge-colorings of $G$ are Kempe-equivalent. Since belcastro and Haas~\cite[Theorem~4.10]{bH} constructed 3-connected cubic graphs with arbitrarily many Kempe equivalence classes, there are 3-connected cubic graphs $G$ where $\mathcal{F}(G,\mathbb{Z}_2\times\mathbb{Z}_2)$ has many components. (Finally, for $A=\Z_5$, Esperet et al.~\cite{esperet-et-al}, with the help of Wigderson and Moraski, gave a 3-connected cubic graph $G$ where $\mathcal{F}(G,\mathbb{Z}_5)$ is disconnected.)
\end{proof}

To further improve the general bounds for bridgeless graphs, we introduce the notion of \emph{cycle degeneracy}. This parameter captures the structural property that a graph can be 
reduced to a single cycle by successively contracting short cycles.

\begin{definition}
For an integer $k \ge 2$, a graph $G$ is \emph{$k$-cycle-degenerate} if it can be reduced to a single cycle by successively contracting subgraphs that are cycles of length at most $k$. The \emph{cycle degeneracy} of $G$, denoted by $\mathrm{cd}(G)$, is the smallest integer $k$ for which $G$ is $k$-cycle-degenerate. Conventionally, if $G$ is already a cycle itself, then we regard it as $1$-cycle-degenerate and set $\mathrm{cd}(G)=1$.
\end{definition}

The power of this parameter lies in its ability to guarantee global flow-connectedness purely through local conditions. If a graph has $\mathrm{cd}(G) \le k$, then for any abelian group $A$ with $|A| \ge \lfloor \frac{3k}{2} \rfloor + 1$, the graph $G$ is unconditionally $A$-flow-connected. The reasoning is straightforward: the size condition on $|A|$ ensures that every contracted cycle of length at most $k$ is an $A$-FRC subgraph. Since contracting $A$-FRC subgraphs preserves $A$-flow-connectedness, the problem reduces to the final remaining cycle, which is trivially $A$-flow-connected. Thus, cycle degeneracy yields a clean, sufficient condition for $A$-flow-connectedness. Note that by Theorem \ref{thm:cycle-FRC-exponent-threshold}, if the cycle $C_n$ is $A$-flow-connected, then every cycle $C_k$ with $k \le n$ is also $A$-flow-connected.

\begin{corollary}\label{cor: cd-A-flow}
Let $G$ be a graph with $\mathrm{cd}(G) \le k$. If the cycle $C_k$ is $A$-FRC, then $G$ is $A$-flow-connected. In particular, this is true when $|A| \ge \lfloor \frac{3k}{2} \rfloor + 1$.
\end{corollary}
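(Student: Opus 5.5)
The plan is induction on the number of contractions in a witnessing sequence for $\mathrm{cd}(G)\le k$. Fix such a sequence $G=G_0,G_1,\dots,G_m$, where $G_m$ is a single cycle and each $G_{i}$ is obtained from $G_{i-1}$ by contracting a subgraph $H_i$ that is a cycle of length $\ell_i\le k$. The base case is that a single cycle is $A$-flow-connected. Every nowhere-zero $A$-flow on a cycle $C$, after fixing a cyclic orientation, is constant with value some $a\ne 0$. Any two such flows differ by a constant on $C$ itself, so they are $A$-flow-adjacent via the cycle $C$. Hence $\F(C,A)$ is complete, and in particular connected.

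For the inductive step we use the defining property of $A$-FRC subgraphs (Definition~\ref{$A$-FRC-def}, via Theorem~\ref{AFRC-thm}): if $H$ is $A$-FRC, then $G$ is $A$-flow-connected if and only if $G/H$ is. So we need each contracted cycle $H_i\cong C_{\ell_i}$ to be $A$-FRC. The hypothesis gives this only for $C_k$. To pass to shorter cycles, we invoke the monotonicity noted just before the corollary, which comes from Theorem~\ref{thm:cycle-FRC-exponent-threshold}: whether $C_n$ is $A$-FRC is governed by a threshold on $n$ in terms of $A$, so $C_k$ being $A$-FRC implies $C_\ell$ is $A$-FRC for all $\ell\le k$. (The remark in the text phrases this loosely as "flow-connected", but the intended content is the FRC property, which is what the threshold theorem addresses.)

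With this, $G_{i-1}$ is $A$-flow-connected if and only if $G_i$ is. Chaining from $G_m$ back to $G_0$ then gives that $G$ is $A$-flow-connected.

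For the "in particular" clause, we appeal to Theorem~\ref{thm:cycle-FRC-exponent-threshold}, or equivalently Corollary~\ref{AFRC-cor}. Presumably the criterion is that $C_\ell$ is $A$-FRC whenever $|A|\ge\lfloor 3\ell/2\rfloor+1$; this matches the instances $C_2$, $C_3$, $C_4$ being FRC for $|A|\ge 5$. Applying it with $\ell=k$ gives the stated bound.

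The main obstacle is the monotonicity step, together with confirming the exact form of the threshold in Theorem~\ref{thm:cycle-FRC-exponent-threshold}. If that theorem is stated in terms of the exponent of $A$ rather than $|A|$, we must check that $|A|\ge\lfloor 3k/2\rfloor+1$ still suffices for every $\ell\le k$. One route is to verify that the FRC condition for $C_\ell$ is implied by the one for $C_k$, for instance by subdividing: $C_k$ is obtained from $C_\ell$ by subdividing edges, and subdivided edges carry equal flow values. If that fails, the fallback is to apply the numerical criterion directly to each $\ell\le k$, since $\lfloor 3\ell/2\rfloor+1$ is increasing in $\ell$.
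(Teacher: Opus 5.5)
Your proof is correct and is essentially the paper's argument. You contract the short cycles one at a time, using Theorem~\ref{AFRC-thm}\ref{AFRC-thm-2} together with the monotonicity in $n$ coming from Theorem~\ref{thm:cycle-FRC-exponent-threshold}, and finish at the final cycle, which is trivially connected; you are also right that the paper's remark should say ``$A$-FRC'' rather than ``$A$-flow-connected.'' The criterion you presume for the ``in particular'' clause is exactly Corollary~\ref{cor:universal-sufficient}, which already resolves your exponent worry via $\frac{|A|}{m}\lceil 2m/3\rceil\ge\lceil 2|A|/3\rceil$, so no further check or subdivision fallback is needed.
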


Esperet et al.~\cite{esperet-et-al} demonstrated that Klee graphs (with $K_4$ being the simplest example) are not $\mathbb{Z}_4$-flow-connected, yet they are $(\mathbb{Z}_2 \times \mathbb{Z}_2)$-flow-connected. Notably, every Klee graph is a cubic graph with $\mathrm{cd}(G) = 3$. In Section 4, we proved a stronger structural limitation: no cubic graph containing a triangle can be $\mathbb{Z}_4$-flow-connected. On the other hand, our corollary guarantees that every graph with $\mathrm{cd}(G) \le 3$ is $A$-flow-connected for each abelian group $A$ with $|A| \ge 5$ or $A \cong \mathbb{Z}_2 \times \mathbb{Z}_2$. 
Building upon our $A$-FRC framework, this yields an alternative proof of their results and extends them to a broader class of graphs.

To upper bound the cycle degeneracy of general graphs, we use a classical breadth-first search (BFS) argument showing that every graph with minimum degree at least three contains a cycle of length $O(\log n)$.

\begin{lemma}\label{lem:log-cycle}
Let $G$ be an $n$-vertex graph with minimum degree $\delta(G) \ge 3$. Then $G$ contains a cycle of length at most $\lfloor2\log_2(n+2)\rfloor-1$.
\end{lemma}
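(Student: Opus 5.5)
We will use the standard breadth-first search argument, taking care with the constants. Let $g$ be the length of a shortest cycle in $G$; a cycle exists because $\delta(G)\ge 3$. Set $r:=\lfloor (g-1)/2\rfloor$. Fix any vertex $v$ and run BFS from $v$. The key claim is that for each $i\le r$, the ball of radius $i$ around $v$ induces a tree. Indeed, any extra edge between two vertices at distance at most $i$ from $v$, or any vertex at distance $i\le r$ reached along two distinct BFS edges, would close a cycle of length at most $2i+1\le g$ through it. More carefully, such a cycle has length at most $2i+1$, and when $i\le r$ this contradicts the minimality of $g$ unless the bound is tight. We handle the tight case by using the precise parity of $g$, as is standard.

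Inside this tree, $v$ has at least $3$ children, and every other vertex at depth less than $r$ has at least $2$ children, since $\delta(G)\ge 3$ and exactly one of its neighbours is its parent. The number of vertices at depth $j\ge 1$ is therefore at least $3\cdot 2^{j-1}$. Summing over depths,
\[
n\;\ge\; 1+\sum_{j=1}^{r} 3\cdot 2^{j-1}\;=\;3\cdot 2^{r}-2,
\]
so $2^{r}\le (n+2)/3$, giving $r\le \log_2(n+2)-\log_2 3$.

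It remains to convert this into a bound on $g$. We have $g\le 2r+2$, since $r\ge (g-2)/2$. Hence
\[
g\;\le\; 2\log_2(n+2)-2\log_2 3+2\;<\;2\log_2(n+2)-1.17.
\]
Since $g$ is an integer and $g<2\log_2(n+2)-1$, we get $g\le \lceil 2\log_2(n+2)-1\rceil-1$. This is at most $\lfloor 2\log_2(n+2)\rfloor-1$ whenever $2\log_2(n+2)$ is not an integer. When it is an integer, the strict inequality $g<2\log_2(n+2)-1.17$ still gives $g\le 2\log_2(n+2)-2\le \lfloor 2\log_2(n+2)\rfloor-1$. So the stated bound follows in every case.

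The main obstacle is the bookkeeping at the boundary depth: when $g$ is odd the ball of radius $r=(g-1)/2$ is a tree, and when $g$ is even the radius is $r=(g-2)/2$. The slack of $2\log_2 3-2\approx 1.17$ covers the even case, so we handle the two parities separately. If the bound turns out off by one there, the sharper count in the even case is $n\ge 3\cdot 2^{r}-2+2^{r}$, which counts the distinct vertices at depth $r+1$ reached by the at least $3\cdot 2^{r}$ edges leaving depth $r$, each such vertex absorbing at most two of them. This extra term absorbs the slack.
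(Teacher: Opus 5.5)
Your main line (BFS from a vertex, $|L_j|\ge 3\cdot 2^{j-1}$ for $j\le r$, hence $n\ge 3\cdot 2^r-2$, then $g\le 2r+2$ and integer rounding) is exactly the paper's proof, and it goes through. However, the ``key claim'' you use to justify the branching is false as stated. When $g=2r+1$, the ball of radius $r$ need not induce a tree: in $K_4$ we have $g=3$ and $r=1$, and the ball of radius $1$ is all of $K_4$. Your closing remark that the ball is a tree when $g$ is odd has the parities backwards. There is no ``tight case'' to handle by parity; you should simply claim less. The count only needs the following for each $j<r$: every vertex of $L_j$ has no neighbour in $L_j$, has no neighbour in $L_{j-1}$ other than its parent, and shares no neighbour in $L_{j+1}$ with another vertex of $L_j$. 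Each violation yields a cycle of length at most $2j+2\le 2r<g$. This is the paper's claim, and edges inside $L_r$ never enter the count.

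There are two smaller slips. First, $2\log_2 3-2\approx 1.1699<1.17$, so the inequality $g<2\log_2(n+2)-1.17$ does not follow. You never need it, though: $g<2\log_2(n+2)-1$ alone gives $g+1\le\lfloor 2\log_2(n+2)\rfloor$, whether or not $2\log_2(n+2)$ is an integer. Second, the fallback in your last paragraph is unnecessary, because $g\le 2r+2$ already covers both parities. Its justification is also wrong: in $K_{3,3}$ ($g=4$, $r=1$) each vertex of $L_2$ receives three edges from $L_1$, not at most two. The bound $n\ge 4\cdot 2^r-2$ for $g=2r+2$ is true, but the correct reason is that a vertex of $L_{r+1}$ receives at most one edge from each branch at the root. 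That paragraph can simply be dropped.
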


\begin{proof}
Let $g$ be the girth of $G$, and let $r:= \lfloor \frac{g-1}{2} \rfloor$. Fix an arbitrary vertex $v \in V(G)$ and perform a breadth-first search (BFS) starting from $v$. 
Let $L_i$ denote the set of vertices at distance exactly $i$ from $v$. We claim that for all $0 \le i < r$, the forward branching is perfect; that is, no two vertices in $L_i$ share a common neighbor in $L_{i+1}$. Indeed, if two distinct paths from $v$ merged at some vertex $w \in L_{i+1}$, then $G$ would contain a cycle of length at most $2(i+1) \le 2r \le g-1 < g$, which contradicts that $G$ has girth $g$. 

Consequently, the BFS tree expands perfectly without any vertex overlap up to depth $r$. Since $\delta(G) \ge 3$, the root $v$ has $|L_1| \ge 3$. For each subsequent level $i\in[r-1]$, each vertex in $L_i$ has at least 2 distinct neighbors in $L_{i+1}$ (since its degree is at least 3, and exactly 1 edge connects to its parent in $L_{i-1}$). 
Thus, the number of vertices at level $i$ satisfies $|L_i| \ge 3 \cdot 2^{i-1}$ for all $i\in[r]$. Hence, the total number of vertices in $G$ must be at least the number of vertices in these first $r$ levels:
\[ n \ge 1 + \sum_{i=1}^{r} |L_i| \ge 1 + 3 \sum_{i=0}^{r-1} 2^i = 3 \cdot 2^r - 2. \]

This inequality simplifies to $2^r \le \frac{n+2}{3}$, and taking the base-2 logarithm yields:
\[ r \le \log_2(n+2) - \log_2 3. \]
We have
$r = \lfloor \frac{g-1}{2} \rfloor \ge \frac{g-2}{2}$, 
so
$g \le 2r + 2$. 
Thus,
\[ g \le 2\log_2(n+2) - 2\log_2 3 + 2. \]
Since $2\log_2 3 \approx 3.17 > 3$, 
we have $g < 2\log_2(n+2) - 1$. Since $g$ must be an integer, $G$ contains a cycle of length at most $\lfloor2\log_2(n+2)\rfloor-1$.
\end{proof}

Let $n_2$ denote the number of degree $2$ vertices in a bridgeless graph $G$. We can  suppress these vertices by replacing each degree $2$ vertex and its two incident edges with a single edge. This smoothing operation does not alter the $A$-flow-connectedness of the graph; that is, the connectedness of the reconfiguration graph $\mathcal{F}(G,A)$ remains unaffected. 
By applying this operation, the smoothed graph has $n-n_2$ vertices and minimum degree at least $3$. Combining this reduction with 
Corollary~\ref{cor: cd-A-flow} and
Lemma~\ref{lem:log-cycle},
we obtain the following logarithmic bound for general bridgeless graphs.

\begin{theorem}\label{thm:bridgeless-log-bound}
    Let $G$ be a bridgeless graph on $n$ vertices, exactly $n_2$ of which have degree $2$. Now $G$ is $A$-flow-connected for every abelian group $A$ satisfying
    \[ |A| \ge \left\lfloor 3\log_2(n-n_2+2) \right\rfloor. \]
\end{theorem}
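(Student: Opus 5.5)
We first reduce to the case where every vertex has degree at least $3$. Suppressing a degree-$2$ vertex (replacing it and its two incident edges by one edge) does not change whether $\mathcal{F}(G,A)$ is connected. The two incident edges are a $2$-edge-cut, so every $A$-flow gives them equal values with respect to a consistent orientation, and every cycle uses both or neither of them. Hence $\mathcal{F}(G,A)\cong\mathcal{F}(G',A)$, where $G'$ is the smoothed graph. We may also assume $G$ is connected, since $G$ is $A$-flow-connected if and only if each component (indeed each block) is. We may further assume $G$ is not itself a cycle, since then the statement is trivial. After these reductions $G'$ is bridgeless, has $n' := n-n_2$ vertices, and has $\delta(G')\ge 3$, apart from loops and multi-edges, which are harmless because a loop or a $2$-cycle is a cycle of length at most $2$.

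Next we bound the cycle degeneracy: we aim to show $\mathrm{cd}(G') \le k := \lfloor 2\log_2(n'+2)\rfloor-1$. The procedure repeats the following step. While the current graph is not a single cycle, suppress all degree-$2$ vertices. If the result has minimum degree at least $3$, Lemma~\ref{lem:log-cycle} gives a cycle of length at most $\lfloor 2\log_2(m+2)\rfloor-1\le k$, where $m\le n'$ is the current number of vertices. Contracting this cycle strictly decreases the number of vertices and keeps the graph bridgeless, so the bound only improves as we go. Suppression steps must be reconciled with the definition of cycle degeneracy, which allows only contractions of short cycles. To do this, we run the argument directly on $A$-flow-connectedness rather than on $\mathrm{cd}$. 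By the chain of equivalences, suppressions preserve connectivity of $\mathcal{F}(\cdot,A)$, and contractions of a cycle of length at most $k$ preserve it whenever $C_k$ is $A$-FRC, by Theorem~\ref{AFRC-thm} and Corollary~\ref{cor: cd-A-flow}. The process ends at a single cycle, or at a vertex with loops, either of which is trivially $A$-flow-connected.

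Finally we check the group size. Corollary~\ref{cor: cd-A-flow} requires $|A|\ge\lfloor 3k/2\rfloor+1$. With $k\le 2\log_2(n'+2)-1$, we get $3k/2\le 3\log_2(n'+2)-3/2$, so
\[
\lfloor 3k/2\rfloor+1\le \lfloor 3\log_2(n'+2)-1/2\rfloor \le \lfloor 3\log_2(n'+2)\rfloor.
\]
The hypothesis therefore suffices, using the monotonicity from Theorem~\ref{thm:cycle-FRC-exponent-threshold}: shorter cycles are $A$-FRC whenever $C_k$ is.

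The main obstacle is the interleaving of suppressions and contractions. Contracting a cycle can create new degree-$2$ vertices or bridges only in degenerate ways, and we must ensure bridgelessness is preserved. Contracting a connected subgraph of a bridgeless graph keeps every cut of size at least $2$, so this holds. We must also ensure the length bound still holds after each step, which it does because the vertex count is monotonically decreasing.
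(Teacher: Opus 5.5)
Your proposal is correct and follows the paper's route. You suppress degree-$2$ vertices, which leaves $\mathcal{F}(G,A)$ unchanged up to isomorphism; you then repeatedly contract cycles of length at most $k=\lfloor 2\log_2(n'+2)\rfloor-1$ supplied by Lemma~\ref{lem:log-cycle}, using that these cycles are $A$-FRC because $\lfloor 3k/2\rfloor+1\le\lfloor 3\log_2(n'+2)\rfloor\le|A|$. Interleaving suppressions with contractions and arguing directly about connectivity of $\mathcal{F}(\cdot,A)$, rather than through the formal cycle-degeneracy parameter, makes rigorous a step the paper only sketches; your only slip is that deleting a loop does not reduce the vertex count, but the edge count still drops, so the process still terminates.
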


For comparison, Esperet et al.~\cite{esperet-et-al} established an absolute constant bound, proving that every bridgeless graph is $A$-flow-connected whenever $|A| \ge (56^8\cdot 27+28)^{45}+1 \approx 1.15 \times 10^{694}$. Our logarithmic bound in Theorem~\ref{thm:bridgeless-log-bound}, while dependent on the graph's order, provides a smaller threshold unless the number of vertices is extremely large.

\subsection{\texorpdfstring{$\bm{A}$}{A}-Flow-Reducibility of Cycles}
\label{sec4.2}
We begin by defining some notation and terminology.
Let $A$ be an abelian group. A function $\beta:V(G)\to A$ is called an \emph{$A$-boundary} of $G$ if $\sum_{v\in V(G)} \beta(v)=0_A.$
For a graph $G$ and any $A$-boundary $\beta$ of $G$, a pair $(D, f)$ is called a \emph{nowhere-zero $\beta$-flow} (or simply \emph{$\beta$-flow}) if $D$ is an orientation on $G$ and $f:E(G)\to A\setminus\{0_A\}$ satisfies $$
\sum_{e\in E_D^+(v)} f(e)-\sum_{e\in E_D^-(v)} f(e)=\beta(v).$$
We define corresponding reconfiguration terms analogously to those introduced in
Section~\ref{intro-sec}: \emph{$\beta$-flow-adjacent}, \emph{$\beta$-flow-equivalent}, and \emph{$\beta$-flow-connected}.
A well-known property in flow theory states that if a graph $G$ contains a subgraph $H$ that admits a $\beta$-flow for every $A$-boundary $\beta$ of $H$ (in which case $H$ is called \emph{$A$-connected}), then $G$ admits an $A$-flow if and only if $G/H$ does. In the reconfiguration setting, however, merely preserving the existence of flows is insufficient. This motivates the following definitions.

\begin{definition}
Let $H$ be a graph, and let $\beta_1$ and $\beta_2$ be $A$-boundaries of $H$ that differ only at two vertices $v_1$ and $v_2$. We say that $H$ is \emph{$(\beta_1,\beta_2)$-flow-convertible} if there exist a $\beta_1$-flow $(D,f_1)$ and a $\beta_2$-flow $(D,f_2)$ such that $f_1-f_2$ is supported on a path whose endpoints are $v_1$ and $v_2$.
\end{definition}

\begin{definition}
\label{$A$-FRC-def}
Let $H$ be a graph and $A$ be an abelian group.
We say that $H$ is \emph{$A$-flow-reconfiguration-contractible}, or simply \emph{$A$-FRC}, if the following 3 conditions hold:
\begin{enumerate}[label=(\roman*)]
\item\label{FRC-1}
The graph $H$ is $A$-connected.
\item\label{FRC-2}
For every $A$-boundary $\beta$ of $H$, the graph $H$ is $\beta$-flow-connected.
\item\label{FRC-3}
For any two $A$-boundaries $\beta_1$ and $\beta_2$ that differ only at two vertices $v_1$ and $v_2$, the graph $H$ is $(\beta_1,\beta_2)$-flow-convertible.
\end{enumerate}
\end{definition}

We define $\F^*(G,A)$ to be the graph with the same vertex set as the flow reconfiguration graph $\F(G,A)$, but with a different adjacency relation: two vertices $(D,f_1)$ and $(D,f_2)$ are adjacent in $\F^*(G,A)$ if $f_1-f_2$ is supported on an even subgraph, rather than merely on a cycle, as in $\F(G,A)$. In fact, $\F^*(G,A)$ can be obtained from $\F(G,A)$ by taking every path $f_0 f_1 \dots f_k$ in which the difference between consecutive flows is supported on pairwise edge-disjoint cycles, and adding extra edges to turn such a path into a clique. It is easy to see that two flows $f_1$ and $f_2$ lie in the same component of $\F(G,A)$ if and only if they lie in the same component of $\F^*(G,A)$. Hence, $\F(G,A)$ is connected if and only if $\F^*(G,A)$ is connected.

Similarly, for a given $A$-boundary $\beta$ of $G$, we define $\F(G,\beta)$ and $\F^*(G,\beta)$ as the graphs whose vertices are all $\beta$-flows of $G$, where two vertices are adjacent if they differ on a cycle or on an even subgraph, respectively.

The following theorem shows that $A$-flow-connectedness is preserved under contraction of an $A$-FRC subgraph.
This result is what motivates
Definition~\ref{$A$-FRC-def}.

\begin{theorem}
\label{AFRC-thm}
Let $G$ be a graph, $H$ be a subgraph of $G$, and $A$ be an abelian group.
\begin{enumerate}[label=(\arabic*)]
    \item\label{AFRC-thm-1} If $H$ is $A$-connected, then $G/H$ is $A$-flow-connected whenever $G$ is $A$-flow-connected.
    \item\label{AFRC-thm-2} Moreover, if $H$ is $A$-FRC, then $G/H$ is $A$-flow-connected if and only if $G$ is $A$-flow-connected.
\end{enumerate}
\end{theorem}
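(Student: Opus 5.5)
Write $\pi:E(G)\to E(G/H)$ for the natural identification $E(G/H)=E(G)\setminus E(H)$, and let $h$ be the vertex of $G/H$ obtained by contracting $H$. The plan has two halves. Restriction transfers connectivity from $G$ to $G/H$, giving (1). Lifting transfers it from $G/H$ to $G$, giving the converse in (2). Throughout we work in $\F^*$ rather than $\F$, since connectivity of the two is equivalent.

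\textbf{Part (1).} A flow $g$ on $G/H$ induces a boundary $\beta_g$ on $H$: for $v\in V(H)$, set $\beta_g(v)$ to be minus the net outflow of $g$ on the edges of $G$ leaving $v$ outside $H$. Flow conservation at $h$ makes $\beta_g$ an $A$-boundary. Since $H$ is $A$-connected, there is a $\beta_g$-flow on $H$, and combining it with $g$ gives a flow on $G$; so the restriction map $\rho:\F(G,A)\to\F(G/H,A)$ is surjective.

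Next check that $\rho$ sends adjacent flows to equal or adjacent flows. If $f_1-f_2$ is supported on a cycle $C$ of $G$, then $\rho(f_1)-\rho(f_2)$ is supported on $E(C)\setminus E(H)$, which is an even subgraph of $G/H$: contracting $H$ preserves even degrees. This gives adjacency in $\F^*(G/H,A)$, or equality if the support is empty.

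Finally, take flows $g,g'$ on $G/H$, lift each to $G$, and connect the lifts by a path in $\F(G,A)$. The image of that path under $\rho$ is a walk from $g$ to $g'$ in $\F^*(G/H,A)$, so $G/H$ is $A$-flow-connected.

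\textbf{Part (2), converse.} Assume $G/H$ is $A$-flow-connected and $H$ is $A$-FRC. The key steps, in order, are as follows.

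\emph{(a)} Two flows $f,f'$ on $G$ with $\rho(f)=\rho(f')$ are equivalent. Their restrictions to $H$ are $\beta$-flows for the same boundary $\beta=\beta_{\rho(f)}$. By FRC condition (ii) they are joined by cycle moves inside $H$, and each such move is also a move in $G$.

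\emph{(b)} Let $g_1,g_2$ be flows on $G/H$ with $g_1-g_2$ supported on a cycle $C$ of $G/H$. If $h\notin C$, lift both using the same flow on $H$; then the lifts differ on the cycle $C\subseteq G$, which is a legal move.

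If $h\in C$, then $C$ passes through $h$ using two edges whose ends in $H$ are $v_1,v_2$. Adding $\delta$ along $C$ changes the induced boundary on $H$ exactly at $v_1$ and $v_2$, by $\pm\delta$. So $\beta_{g_1}$ and $\beta_{g_2}$ differ only at $v_1,v_2$. When $v_1\ne v_2$, FRC condition (iii) supplies a $\beta_{g_1}$-flow $f_1^H$ and a $\beta_{g_2}$-flow $f_2^H$ on $H$, with a common orientation, whose difference is supported on a $v_1,v_2$-path $P$. Since $f_2^H-f_1^H$ has net outflow $\mp\delta$ at the ends of $P$, it equals $\delta$ sent along $P$ with the matching orientation. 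Hence the lifts $g_1\cup f_1^H$ and $g_2\cup f_2^H$ differ by $\delta$ on the cycle $(C-h)+P$, which is a single move in $G$. When $v_1=v_2$, no change is needed in $H$, and $C$ lifts to a cycle through $v_1$.

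\emph{(c)} Combine (a) and (b). Connect $\rho(f)$ to $\rho(f')$ in $\F(G/H,A)$, lift each step by (b), and glue consecutive lifts that share the same image using (a).

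The main obstacle I expect is the bookkeeping in step (b): getting the sign conventions between $\delta$ on $C$ and the flow difference along $P$ consistent, and handling the case $v_1=v_2$. A related point is checking in (a) that the lifts chosen in (b) may be arbitrary flows on $H$ with the right boundary, so that consecutive steps glue correctly.
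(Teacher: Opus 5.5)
Your proposal is correct and follows essentially the same route as the paper. For (1), the restriction map is surjective by $A$-connectedness of $H$ and sends each cycle move in $G$ to an even-subgraph move in $G/H$. For (2), you lift a reconfiguration sequence from $G/H$ step by step, using FRC condition (ii) to join lifts lying over the same flow and condition (iii) to close $C-h$ through $H$ into a single cycle.

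Your version is slightly cleaner than the paper's in two respects. First, you never need to assume $H$ is induced, since you keep the loops of $G/H$. Second, the passage from $\F^*$ back to $\F$ in (1) is harmless: the image of a cycle move is a constant $\delta$ along a closed trail, which splits into edge-disjoint cycle moves.
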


The idea of the proof of Theorem~\ref{AFRC-thm} is very natural, but making this intuition precise requires some details.
We show for (1) that walks in $\F(G,A)$ map to walks in $\F(G/H,A)$ and for (2) that walks in
$\F(G/H,A)$ lift to walks in $\F(G,A)$.

\begin{proof}
Fix an orientation $D$ of $G$, and let $D':= D|_{E(G)\setminus E(H)}$.
That is, $D'$ is
the restriction of $D$ to $G/H$. 
(All flows we consider are with respect to these orientations.)

We first prove \ref{AFRC-thm-1}. We assume $H$ is induced: if $H$ is $A$-connected, then so is the induced subgraph $G[V(H)]$, and $G/H = G/G[V(H)]$ after deleting loops. 
Let $f$ be an arbitrary $A$-flow of $G$. Define a map $\pi: V(\F^*(G,A)) \to V(\F^*(G/H,A))$ by $\pi(f) = f'$, where $f'$ is obtained from $f$ by contracting $H$ to a single vertex $v_H$, while preserving the orientation and the flow value of each edge in $E(G) \setminus E(H)$. It is easy to check that $f'$ is an $A$-flow of $G/H$; so $\pi$ is well-defined.
It is well known that, since $H$ is $A$-connected, for each $A$-flow $f'$ of $G/H$, there exists an $A$-flow $f$ of $G$ with $\pi(f) = f'$. Thus $\pi$ is surjective.  

Now let $f_1, f_2$ be two adjacent vertices in $\F^*(G,A)$. Then the difference $f_1 - f_2$ is supported on an even subgraph $C$ of $G$. If $C \subseteq H$, then $\pi(f_1) = \pi(f_2)$. Otherwise, the image of $C$ in $G/H$ is also an even subgraph, denoted $C'$. In this case, $\pi(f_1) \neq \pi(f_2)$, and the difference $\pi(f_1) - \pi(f_2)$ is supported on $C'$. Hence $\pi(f_1)$ and $\pi(f_2)$ are adjacent in $\F^*(G/H, A)$. Consequently, $\pi$ induces a surjective homomorphism from $\F^*(G,A)$ to $\F^*(G/H, A)$. Therefore, if $G$ is $A$-flow-connected, then so is $G/H$. This proves \ref{AFRC-thm-1}.

We now prove \ref{AFRC-thm-2}. Sufficiency follows directly from \ref{AFRC-thm-1}. It remains to prove necessity. Let $\pi$ be the surjective homomorphism from $\F^*(G,A)$ to $\F^*(G/H,A)$ defined above.

Assume that $G/H$ is $A$-flow-connected. Fix an $A$-flow $f'$ on $G/H$, and choose two arbitrary flows $f_1, f_2 \in \pi^{-1}(f')$. Now $f_1(e) = f_2(e)$ for every edge $e \in E(G) \setminus E(H)$. We first show that $f_1$ and $f_2$ are $A$-flow-equivalent. Let $\beta$ be the $A$-boundary on $H$ induced by $f_1$ (equivalently by $f_2$). By Definition~\ref{$A$-FRC-def}\ref{FRC-2}, $\mathcal{F}(H,\beta)$ contains a path $g_1 g_2 \dots g_m$ such that $g_1 = f_1|_H$ and $g_m = f_2|_H$. For each $i \in [m]$, let $f^i$ be the flow on $G$ that agrees with $f_1$ on $E(G) \setminus E(H)$ and with $g_i$ on $E(H)$. Then $f^1 f^2 \dots f^m$ is a path in $\mathcal{F}(G,A)$ with $f^1 = f_1$ and $f^m = f_2$. Hence $f_1$ and $f_2$ are $A$-flow-equivalent.

Next, we show that if $f_1'$ and $f_2'$ are $A$-flow-adjacent in $G/H$, then there exist $f_1 \in \pi^{-1}(f_1')$ and $f_2 \in \pi^{-1}(f_2')$ such that $f_1$ and $f_2$ are $A$-flow-adjacent in $G$. Let $\operatorname{supp}(f_1' - f_2') = E(C')$ and consider two cases.

\textit{Case 1:} If $v_H \notin V(C')$, then $C'$ is a cycle in $G$ (as $v_H$ is the contracted vertex). Choose any $f_1 \in \pi^{-1}(f_1')$, and define $f_2$ by letting $f_2$ agree with $f_1$ on $E(H)$ and with $f_2'$ on $E(G) \setminus E(H)$. Then $\pi(f_2) = f_2'$, and $\operatorname{supp}(f_1 - f_2) = E(C')$. Therefore, $f_1$ and $f_2$ are $A$-flow-adjacent in $G$.

\begin{figure}[!h]
\tikzstyle{uStyle}=[shape = circle, minimum size = 5pt, inner sep = 0pt,
outer sep = 0pt, draw, fill=white, semithick]
\tikzstyle{bStyle}=[shape = circle, minimum size = 5pt, inner sep = 0pt,
outer sep = 0pt, draw, fill=black, semithick]
\tikzstyle{eStyle}=[shape = circle, minimum size = 5pt, inner sep = 0pt,
outer sep = 0pt, draw=none, fill=none, semithick]
\tikzset{every node/.style=uStyle}
\centering
\begin{tikzpicture}[semithick]
\draw (0,0) ellipse (1.5cm and 1cm);
\draw (.2,0) ellipse (.8cm and .35cm);
\draw[very thick] (.6,-.275) circle (.15cm);
\draw (-.85,0) node[eStyle] {$C$};
\draw (.6,-.65) node[eStyle] {$H$};
\draw (-1.35,-0.9) node[eStyle] {$G$};

\begin{scope}[xshift=-4cm]
\draw (0,0) ellipse (1.5cm and 1cm);
\draw (.45,0) ellipse (.6cm and .35cm);
\draw[line width=.6mm] (.6,-.335) circle (.035cm);
\draw (-.425,0) node[eStyle] {$C'$};
\draw (.72,-.6) node[eStyle] {$v_H$};
\draw (-1.62,-0.9) node[eStyle] {$G/H$};
\end{scope}
\end{tikzpicture}
	\captionsetup{width=.550\textwidth}
	\caption{
When lifting a reconfiguration sequence from $G/H$ to $G$ 
the most interesting case is when the cycle $C'$ in a reconfiguration step 
intersects $v_H$.}
\end{figure}

\textit{Case 2:} If $v_H \in V(C')$, then either $C'$ is a cycle in $G$, which can be handled as in the previous case, or $C'$ corresponds to a path $P$ in $G$ with endpoints $x, y \in V(H)$. Let $\beta_1$ and $\beta_2$ be the boundaries on $H$ induced by $f_1'$ and $f_2'$, respectively. Note that $\beta_1$ and $\beta_2$ differ only at vertices $x$ and $y$. By Definition~\ref{$A$-FRC-def}\ref{FRC-3}, there exist a $\beta_1$-flow $\psi_1$ and a $\beta_2$ flow $\psi_2$ such that $\operatorname{supp}(\psi_1 - \psi_2) = E(Q)$, where $Q$ is a path in $H$ with endpoints $x$ and $y$. For each $i \in [2]$, we obtain $f_i$ by extending $f_i'$ to $G$ so that it agrees with $\psi_i$ on $E(H)$. Then $\pi(f_i) = f_i'$, and $\operatorname{supp}(f_1 - f_2) = E(P) \cup E(Q)$. Since $P \cup Q$ forms a cycle, $f_1$ and $f_2$ are $A$-flow-adjacent in $G$.

Thus, in both cases, any two adjacent flows in $G/H$ lift to adjacent flows in $G$. Because $\pi$ is surjective, and any two flows in the same preimage $\pi^{-1}(f')$ are $A$-flow-equivalent, we conclude that $G$ is $A$-flow-connected. This proves \ref{AFRC-thm-2}.
\end{proof}

Motivated by Theorem~\ref{AFRC-thm}, we now turn our attention to cycles. 
Recall that for a nonzero element $\gamma$ in a finite abelian group $A$ (written additively), the \emph{order} of $\gamma$, denoted by $\ord(\gamma)$, is the smallest positive integer $k$ such that $k\gamma = 0$. 

\begin{remark}
It is helpful to visualize the action of adding $\gamma$ on the group $A$. 
Let $p:=\ord(\gamma)$.
The element $\gamma$ generates a cyclic subgroup $\langle \gamma \rangle$ of size $p$. This naturally partitions the group $A$ into $|A|/p$ disjoint cosets. Every coset has the same size $p$ and can be explicitly enumerated as $\{a, a+\gamma, a+2\gamma, \dots, a+(p-1)\gamma\}$ for some element $a \in A$.
\end{remark}

The following theorem provides a complete characterization of the $A$-FRC property for cycles over any finite abelian group $A$.

\begin{theorem}\label{thm:universal-FRC}
Let $A$ be a finite abelian group, 
and fix an integer $n \ge 2$. 
For each element $\gamma \in A \setminus \{0_A\}$, denote its order by $\ord(\gamma)$. The \emph{structural cost} $M(\gamma)$ is given by 
\[ M(\gamma):= \frac{|A|}{\ord(\gamma)} \left\lceil \frac{2 \cdot \ord(\gamma)}{3} \right\rceil. \]
The cycle $C_n$ is $A$-FRC if and only if $M(\gamma) \ge n + 1$ for every $\gamma \in A \setminus \{0_A\}$.
\end{theorem}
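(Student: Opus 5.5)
Throughout, label $C_n$ as $v_1e_1v_2e_2\cdots v_ne_nv_1$ and orient it cyclically, so $e_i$ is directed from $v_i$ to $v_{i+1}$. The plan is to reduce all three conditions of Definition~\ref{$A$-FRC-def} to a covering problem on the cosets of $\langle\gamma\rangle$, and then solve that problem one coset at a time.

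\textbf{Step 1 (conditions (i) and (ii)).} For a $\beta$-flow $f$, the conservation law reads $f(e_i)-f(e_{i-1})=\beta(v_i)$. So if $g$ is one fixed (possibly zero-valued) $\beta$-flow, then the $\beta$-flows are exactly $g+c$ for $c\in A$, where $c$ must avoid the at most $n$ values $-g(e_i)$.
\begin{itemize}
\item Any two $\beta$-flows differ by a constant on the whole cycle. Their difference is therefore supported on a cycle, so condition (ii) holds automatically whenever $\beta$-flows exist.
\item Since $g$ is arbitrary when $\beta$ is, condition (i) holds if and only if $n\le |A|-1$.
\end{itemize}
Because $\lceil 2p/3\rceil\le p$, we have $M(\gamma)\le |A|$. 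Hence the hypothesis $M(\gamma)\ge n+1$ already forces (i). The real content of the theorem is condition (iii).

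\textbf{Step 2 (reformulating (iii)).} Let $\beta_1,\beta_2$ differ only at $v_1\neq v_2$, by $\pm\gamma$ with $\gamma\neq 0$. The two $v_1,v_2$-paths in $C_n$ are $P$ and $Q$, with $E(P)\cup E(Q)=E(C_n)$ and both nonempty. Let $g$ be a $\beta_1$-flow allowed to vanish; as $\beta_1$ ranges over boundaries, $g$ ranges over all functions $E\to A$. A pair of flows differing only on a $v_1,v_2$-path has one of two forms:
\begin{itemize}
\item $f_1=g+c$ and $f_2=g+c+\gamma\mathbf 1_P$;
\item $f_1=g+c$ and $f_2=g+c-\gamma\mathbf 1_Q$.
\end{itemize}
In the first option, $c$ is blocked by $\{x,x-\gamma\}$ for each value $x=-g(e)$ with $e\in P$, and by $\{x\}$ for each $x=-g(e)$ with $e\in Q$. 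In the second option the roles swap: a $P$-edge blocks $\{x\}$ and a $Q$-edge blocks $\{x,x+\gamma\}$. So $C_n$ fails (iii) if and only if, for some $\gamma$ and some split of the $n$ edges into nonempty $P,Q$, we can choose values so that both options block all of $A$. The blocking sets are unions of translates by multiples of $\gamma$, so the problem decomposes over the $|A|/p$ cosets of $\langle\gamma\rangle$, where $p=\ord(\gamma)$. Each coset is a copy of $\Z_p$ on which $\gamma$ acts as $+1$.

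\textbf{Step 3 (key counting lemma).} Let $S,T\subseteq\Z_p$ be the values contributed by $P$-edges and $Q$-edges in one coset. The covering conditions are
\[
S\cup(S-1)\cup T=\Z_p \qquad\text{and}\qquad S\cup T\cup(T+1)=\Z_p .
\]
The claim is that these hold if and only if $|S|+|T|\ge\lceil 2p/3\rceil$ is achievable; that is, $\lceil 2p/3\rceil$ is the minimum.

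For the lower bound, let $U=\Z_p\setminus(S\cup T)$. Each $u\in U$ must satisfy $u+1\in S$ and $u-1\in T$, and both maps $u\mapsto u\pm1$ are injective. Hence
\[
p-|S\cup T|\le \min\{|S|,|T|\}\le m/2, \qquad\text{where } m:=|S|+|T|\ge |S\cup T|,
\]
which gives $p\le 3m/2$.

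For the upper bound, use the periodic pattern $T,U,S,T,U,S,\dots$ around $\Z_p$, which covers both conditions. When $3\nmid p$, absorb the leftover one or two positions into $S$ or $T$; one checks that $p=2$ also works with one element in $S$ and one in $T$.

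\textbf{Step 4 (assembling the bound).} Summing over cosets, blocking both options needs at least $M(\gamma)$ edges, and exactly $M(\gamma)$ suffice. Any surplus edges can repeat already-used values, and the construction uses both $P$- and $Q$-type edges, so both paths are nonempty as required. Therefore (iii) fails for some $\gamma$ if and only if $n\ge M(\gamma)$. Combined with Step 1, this proves the theorem.

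The main obstacle is Step 2. It requires carefully justifying that these two options are the only ways for the difference to be supported on a $v_1,v_2$-path, with the correct signs and directions of the $\pm\gamma$ shifts. It also requires checking that an arbitrary $g$ and an arbitrary split $(P,Q)$ are simultaneously realizable by some choice of $\beta_1$, $v_1$ and $v_2$. Once the two blocking patterns are correctly identified, Step 3 is a short injection argument.
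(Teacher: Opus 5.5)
Your proposal is correct and follows essentially the same route as the paper. Both arguments parametrize the $\beta$-flows on the cycle by one free constant and reduce condition (iii), coset by coset, to $U+\gamma\subseteq S$ and $U-\gamma\subseteq T$, where $U$ is the complement of $S\cup T$. Both then get $|S|+|T|\ge\lceil 2p/3\rceil$ per coset by the same counting and realize the bound with the period-$3$ pattern. The Step~2 point you flag as the main obstacle is routine: a flow supported on a $v_1,v_2$-path with boundary $\pm\gamma$ at its ends is constant along it, and every $g:E\to A$ is a $\partial g$-flow. This is exactly how the paper realizes its offsets $c_e$.
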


\begin{proof}
We begin by algebraically
parameterizing all flows on a cycle $C_n$. We identify all orientations of $C_n$ with a fixed cyclic orientation $D$ by replacing the value on an oppositely oriented edge by its inverse in $A$. For each $A$-boundary $\beta$, a $\beta$-flow $(D, f)$ on $C_n$ is uniquely determined by the flow value $a = f(e_1)$ on an arbitrary reference edge $e_1$. By flow conservation, the flow on each other edge $e \in E(C_n)$ can be written as $f(e) = a + c_e$, where $c_e \in A$ is a constant offset determined entirely by $\beta$.
The nowhere-zero condition requires that $a+c_e \neq 0_A$ for every edge $e\in E(C_n)$; equivalently, the initial value $a$ must avoid the forbidden values $-c_e$ for all $e\in E(C_n)$.

We first prove the sufficiency. Assume $M(\gamma) \ge n + 1$ for all $\gamma \neq 0_A$. 
Let $p:=\ord(\gamma)$.
Now $M(\gamma) = \frac{|A|}{p}\lceil \frac{2p}{3} \rceil \le \frac{|A|}{p} \cdot p = |A|$. Thus, our hypothesis gives $|A| \ge n+1$.

We verify the three conditions of Definition~\ref{$A$-FRC-def}. For each $A$-boundary $\beta$, the nowhere-zero condition forbids at most $n$ initial values (one for each edge). Since $|A| \ge n+1 > n$, there is at least one valid nowhere-zero $\beta$-flow; hence, $C_n$ is $A$-connected (so satisfies Definition~\ref{$A$-FRC-def}\ref{FRC-1}). Furthermore, every two $\beta$-flows differ by a constant flow along the entire cycle. Thus, their difference is a valid cycle flow, meaning the reconfiguration graph $\mathcal{F}(C_n, \beta)$ is connected, fulfilling Definition~\ref{$A$-FRC-def}\ref{FRC-2}.

To verify that $C_n$ is
$(\beta_1, \beta_2)$-convertible (Definition~\ref{$A$-FRC-def}\ref{FRC-3}), let $\beta_1$ and $\beta_2$ differ by $\gamma \in A \setminus \{0_A\}$ at vertices $u$ and $w$. Let $P_1$ and $P_2$ be the two paths on $C_n$ connecting $u$ and $w$. Note that $|E(P_1)| + |E(P_2)| = n$. Let $X$ be the set of forbidden initial flow values associated with $P_1$ under $\beta_1$ (i.e., $X = \{-c_e \mid e \in E(P_1)\}$), and $Y$ be the forbidden set for $P_2$. Clearly, $|X| \le |E(P_1)|$ and $|Y| \le |E(P_2)|$, yielding $|X| + |Y| \le n$. 

Let $Z:=A\setminus (X\cup Y)$.
To convert $\beta_1$ to $\beta_2$, we must either add $\gamma$ to $P_1$ or subtract $\gamma$ from $P_2$. Suppose  that instead both options fail to produce a nowhere-zero $\beta_2$-flow for every valid initial value $z \in Z$. If adding $\gamma$ to $P_1$ fails, then the shifted values must hit the forbidden set of $P_1$, implying $Z + \gamma \subseteq X$. If subtracting $\gamma$ from $P_2$ fails, then $Z - \gamma \subseteq Y$. 

Note that the element $\gamma$ generates a cyclic subgroup $\langle \gamma \rangle$ of order $p$,
which partitions $A$ into $|A|/p$ disjoint cosets. For any such coset $A_i$, let $Z_i:= Z \cap A_i$, $X_i:= X \cap A_i$, and $Y_i:= Y \cap A_i$. The failure conditions apply locally: $Z_i + \gamma \subseteq X_i$ and $Z_i - \gamma \subseteq Y_i$. Let $z_i:= |Z_i|$, $x_i:= |X_i|$, and $y_i:= |Y_i|$. These subset inclusions force $x_i \ge z_i$ and $y_i \ge z_i$, yielding $x_i + y_i \ge 2z_i$. Moreover, since $Z_i$ is the complement of $X_i \cup Y_i$ within the coset, we have $|X_i \cup Y_i| = p - z_i$, forcing $x_i + y_i \ge p - z_i$. Thus, for every coset, we have 
\[ x_i + y_i \ge \max(2z_i, p - z_i). \]
The lower bound for this maximum over all integers $z_i \ge 0$ occurs near the real-valued intersection $2z_i = p - z_i$, meaning $z_i \approx p/3$. Letting $z_i:= \lfloor p/3 \rfloor$ yields the exact minimum $\min(x_i + y_i) = \lceil 2p/3 \rceil$. Summing this minimum bound over all $|A|/p$ cosets gives:
\[ |X| + |Y| \ge \sum_{i=1}^{|A|/p} \left\lceil \frac{2p}{3} \right\rceil = \frac{|A|}{p} \left\lceil \frac{2p}{3} \right\rceil = M(\gamma). \]
However, this implies $M(\gamma) \le |X| + |Y| \le n$, which directly contradicts our initial hypothesis that $M(\gamma) \ge n+1$. Therefore, at least one conversion shift must succeed for some valid nowhere-zero flow, proving that $C_n$ is $A$-FRC.
\medskip

We now prove the necessity by an explicit construction. Suppose that there exists an element $\gamma \in A \setminus \{0_A\}$ such that $M(\gamma) \le n$. We will construct a specific $A$-boundary $\beta_1$, and assign a target boundary $\beta_2$ that differs from $\beta_1$ by exactly $\gamma$ at two distinct vertices. We will demonstrate that either no valid $\beta_1$-flow exists, or that no valid $\beta_1$-flow can be successfully reconfigured into any valid $\beta_2$-flow. In either case, this violates the conditions of Definition~\ref{$A$-FRC-def}, proving that $C_n$ is not $A$-FRC. 

Let $p := \ord(\gamma)$. In each coset $A_i$ of the cyclic subgroup $\langle \gamma \rangle$, we can select sets $X_i, Y_i \subseteq A_i$ satisfying $|X_i| + |Y_i| = \lceil 2p/3 \rceil$ such that for any element $z \in A_i \setminus (X_i \cup Y_i)$, we have $z + \gamma \in X_i$ and $z - \gamma \in Y_i$. 

To see this combinatorial fact explicitly, we identify $A_i$ with the cyclic sequence $(v_0, v_1, \dots, v_{p-1})$, where $v_{j+1} = v_j + \gamma$ (indices taken modulo $p$). We can systematically place valid initial values by defining the set $Z_i = \{v_{3m} \mid 0 \le m < \lfloor p/3 \rfloor\}$, which has size $\lfloor p/3 \rfloor$. The required conditions dictate that the right neighbors $v_{3m+1}$ must belong to $X_i$ and the left neighbors $v_{3m-1}$ must belong to $Y_i$. Since these neighbor sets are mutually disjoint for $m < \lfloor p/3 \rfloor$, we strictly assign them to $X_i$ and $Y_i$, respectively. Any remaining unassigned elements in $A_i$ can be arbitrarily partitioned into $X_i$ and $Y_i$. This construction leaves exactly $\lfloor p/3 \rfloor$ elements in $Z_i = A_i \setminus (X_i \cup Y_i)$, ensuring that $|X_i| + |Y_i| = p - \lfloor p/3 \rfloor = \lceil 2p/3 \rceil$.

Taking the union across all $|A|/p$ cosets, we obtain global forbidden sets $X = \bigcup X_i$ and $Y = \bigcup Y_i$, with a total combined size of $|X| + |Y| = M(\gamma)$.
Because $M(\gamma) \le n$, the cycle $C_n$ has enough edges to accommodate these constraints. We partition $C_n$ into two paths, $P_1$ and $P_2$, both with endpoints $u$ and $w$, such that $|E(P_1)| \ge |X|$ and $|E(P_2)| \ge |Y|$. We then directly assign the offset $c_e$ to each edge $e$: for edges in $P_1$, we injectively assign the value $-x$ for every $x \in X$, and assign arbitrary values from $-X$ to any remaining edges. We apply the identical procedure for $P_2$, assigning $-y$ for every $y \in Y$. 

With all $c_e$ fixed, we define the boundary $\beta_1(v) = c_{e_{out}} - c_{e_{in}}$ for every vertex $v$, where $e_{in}$ and $e_{out}$ are the incoming and outgoing edges incident to $v$ under the chosen orientation $D$. This definition ensures that $\sum_{v \in V(C_n)} \beta_1(v) = 0_A$, making $\beta_1$ a valid boundary. 
Under this specific boundary $\beta_1$, if we choose an initial flow value $a = f_1(e_1)$, the flow on any arbitrary edge $e \in E(C_n)$ is exactly $f_1(e) = a + c_e$. To ensure $f_1$ is a valid nowhere-zero flow, the initial value $a$ must avoid $-c_e$ for all edges. By our assignment, the set of all such forbidden values is exactly $X \cup Y$. Thus, the set of valid initial values is precisely $Z = A \setminus (X \cup Y)$.

At this point, we define the target boundary $\beta_2$. Let $\mathbf1_v: V(C_n) \to \{0, 1\}$ denote the standard indicator function taking the value $1$ at vertex $v$ and $0$ elsewhere. Now let $\beta_2:= \beta_1 + \gamma \cdot \mathbf1_{u} - \gamma \cdot \mathbf1_{w}$. 

Note that the number of valid initial values for $\beta_1$ is $|Z| = |A| - M(\gamma)$. If $M(\gamma) = |A|$, then $Z = \emptyset$. This implies that no valid $\beta_1$-flow exists at all, meaning $C_n$ violates the fundamental $A$-connectedness property (Definition~\ref{$A$-FRC-def}\ref{FRC-1}), completing the proof immediately. Therefore, we may assume $M(\gamma) < |A|$, which guarantees that $Z$ is non-empty. Consequently, there exists at least one valid $\beta_1$-flow $f_1$, generated by some initial value $z \in Z$.

To convert this valid $f_1$ into a $\beta_2$-flow, we must modify the flow along the path between $u$ and $w$. Suppose we attempt to achieve this by adding $\gamma$ to the edges of $P_1$. The new flow $f_2$ on any edge $e \in E(P_1)$ would algebraically become:
\[ f_2(e) = f_1(e) + \gamma = (z + c_e) + \gamma = (z + \gamma) + c_e. \]
Recall our structural setup: because $z \in Z$, we guaranteed that the shifted value $(z + \gamma)$ falls strictly into the set $X$. Furthermore, because we injectively assigned the value $-x$ to the offsets on $P_1$ for \textit{every} element $x \in X$, there must exist a specific edge $e^* \in E(P_1)$ that received the exact offset $c_{e^*} = -(z+\gamma)$. Evaluating the new flow $f_2$ on this specific edge yields exactly zero:
\[ f_2(e^*) = (z + \gamma) + c_{e^*} = (z + \gamma) - (z + \gamma) = 0_A. \]
This forces a zero value on $e^*$, meaning the conversion along $P_1$ universally fails for all valid initial values $z \in Z$.

Alternatively, modifying $P_2$ requires adding $-\gamma$ to the edges of $P_2$. The new flow on any edge $e \in E(P_2)$ becomes $(z - \gamma) + c_e$. Our construction inherently forces $(z - \gamma) \in Y$, and since the offsets on $P_2$ perfectly cover the additive inverses of $Y$, there must exist an edge $e^{**} \in E(P_2)$ where the flow identically hits zero. 

Since modifying the flow along either path forces an edge to take a value of $0_A$, the flow cannot be converted. Consequently, $C_n$ is not $A$-FRC.
\end{proof}

Although Theorem~\ref{thm:universal-FRC} gives a complete answer, it is not necessary to check $M(\gamma)$ for every element. We can simplify the $A$-FRC condition by using the structure of $A$. By the fundamental theorem of finite abelian groups, every finite abelian group $A$ can be written as a direct product of cyclic groups:
\[ A \cong \mathbb{Z}_{p_1^{n_1}} \times \mathbb{Z}_{p_2^{n_2}} \times \dots \times \mathbb{Z}_{p_k^{n_k}}, \]
where $p_1, p_2, \dots, p_k$ are prime numbers (not necessarily distinct) and $n_i \ge 1$ for each $i \in \{1, \dots, k\}$. 
The \emph{exponent} of $A$, denoted by $\exp(A)$, is the least common multiple of the orders of all elements in $A$. From the decomposition above, the exponent can be calculated as:
\[ \exp(A) = \operatorname{lcm}(p_1^{n_1}, p_2^{n_2}, \dots, p_k^{n_k}). \]
A basic property of finite abelian groups is that $A$ always contains an element of order $\exp(A)$. In fact, the element $(1, 1, \dots, 1)$ in the direct product has exactly this order. For more details about this classification and the group exponent, see~\cite[Section 5.2, Theorem 5]{dummit2004abstract}, or see~\cite{wiki:abelian_group_classification, enwiki:1350147197} for a quick overview.

To minimize the cost $M(\gamma)$, we need to consider the ceiling function. Any extra value added by the ceiling function disappears completely if $3$ divides $\exp(A)$. If not, this extra value becomes as small as possible when the element's order is as large as possible, which is exactly $\exp(A)$. This gives us the following simple global rule.

\begin{theorem}[Characterization of $A$-FRC Cycles]\label{thm:cycle-FRC-exponent-threshold}
Let $n \ge 3$ be an integer, let $A$ be a finite abelian group,
and let $m$ be its exponent. The cycle $C_n$ is $A$-FRC if and only if 
\[ \frac{|A|}{m} \left\lceil \frac{2m}{3} \right\rceil \ge n + 1. \]
\end{theorem}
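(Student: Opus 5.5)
This follows from Theorem~\ref{thm:universal-FRC} once we show that
\[ \min_{\gamma\in A\setminus\{0_A\}} M(\gamma) \;=\; \frac{|A|}{m}\left\lceil \frac{2m}{3}\right\rceil , \]
where the minimum is attained at an element of order $m=\exp(A)$. Such an element exists by the remark preceding the statement, so the right-hand side is a value of $M$. It therefore suffices to prove $M(\gamma)\ge M(\gamma_0)$ for every nonzero $\gamma$, where $\ord(\gamma_0)=m$. Then ``$M(\gamma)\ge n+1$ for all $\gamma$'' is equivalent to ``$M(\gamma_0)\ge n+1$'', and the statement follows.

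Write $p=\ord(\gamma)$, so $p\mid m$. Set $g(p):=\frac{1}{p}\lceil 2p/3\rceil$, so that $M(\gamma)=|A|\,g(p)$. Thus I need $g(p)\ge g(m)$ whenever $p\mid m$. The computation is elementary: $g(p)=2/3$ if $3\mid p$, $g(p)=\frac{2}{3}+\frac{1}{3p}$ if $p\equiv1\pmod 3$, and $g(p)=\frac{2}{3}+\frac{2}{3p}$ if $p\equiv 2\pmod 3$. In every case $g(p)\ge 2/3$, with equality exactly when $3\mid p$.

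I then split into two cases.
\begin{itemize}
\item If $3\mid m$, then $g(m)=2/3\le g(p)$, and we are done.
\item If $3\nmid m$, then $3\nmid p$ for every divisor $p$ of $m$. Here I must compare $g(p)=\frac{2}{3}+\frac{r(p)}{3p}$ with $g(m)=\frac{2}{3}+\frac{r(m)}{3m}$, where $r(x)\in\{1,2\}$ is $1$ or $2$ according as $x\equiv1$ or $x\equiv 2 \pmod 3$. When $p=m$ there is nothing to show. When $p<m$ and $p\mid m$, we have $m\ge 2p$, so
\[ \frac{r(m)}{m}\le \frac{2}{2p}=\frac1p\le \frac{r(p)}{p}. \]
This gives the inequality.
\end{itemize}

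The only step needing any care is this second case, where the ceiling's excess depends on the residue of the order mod $3$. Using $m\ge 2p$ for proper divisors absorbs the worst-case ratio $r(m)/r(p)\le 2$, so I do not expect a real obstacle. The hypothesis $n\ge 3$ plays no role beyond matching the range of Theorem~\ref{thm:universal-FRC}, which covers all $n\ge2$.
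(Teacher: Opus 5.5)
Your proof is correct and follows the paper's argument. Both reduce via Theorem~\ref{thm:universal-FRC} to minimizing $\frac{1}{p}\lceil 2p/3\rceil$ over element orders $p\mid m$, write this as $\frac23+\frac{p \bmod 3}{3p}$, and use $m\ge 2p$ for proper divisors to absorb the factor $2$. The only difference is cosmetic: you split on whether $3\mid m$, whereas the paper splits on whether $3\mid p$ in the inequality $m \bmod 3\le d\,(p \bmod 3)$ with $m=pd$.
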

By Theorem~\ref{thm:universal-FRC}, we want to choose $\gamma\in A\setminus\{0_A\}$ to minimize $M(\gamma)$.  If there exists $\gamma$ with $\ord(\gamma)$ divisible by $3$, then clearly this $\gamma$ achieves the minimum.  If not, then intuitively we want $\gamma$ with $\ord(\gamma)$ as large as possible, since this minimizes the ratio $\frac{1}{\ord(\gamma)}\lceil\frac{2\ord(\gamma)}3\rceil$.  (A priori, we might need to consider separately those $\gamma$ with $\ord(\gamma)\equiv 1\pmod 3$
and with $\ord(\gamma)\equiv 2\pmod 3$.)  The theorem statement unifies these possibilities.

\begin{proof}
By Theorem~\ref{thm:universal-FRC}, $C_n$ is $A$-FRC if and only if $\min_{\gamma \neq 0_A} M(\gamma) \ge n+1$. 
For each element order $p$, let $R(p):= \frac{1}{p}\lceil \frac{2p}{3} \rceil$; this is the
\emph{cost ratio function}.
Let $\operatorname{Ord}(A) := \{ \ord(\gamma) \mid \gamma \in A \setminus \{0_A\} \}$; so $\operatorname{Ord}(A)$ is the set of all possible orders in $A$.
Since $M(\gamma) = |A| \cdot \frac{1}{\ord(\gamma)}\left\lceil \frac{2\cdot \ord(\gamma)}{3} \right\rceil$, minimizing this cost (in the statement of the theorem) over all non-zero elements is equivalent to finding the global minimum of $R(p)$
over all element orders $p$. 

We first establish an algebraic expansion of $R(p)$. Let $p = 3k + r$, where $r = p \bmod 3$; here $r \in \{0, 1, 2\}$. 
The ratio expands as:
\[ R(p) = \frac{1}{3k+r} \left\lceil \frac{6k+2r}{3} \right\rceil = \frac{2k + \lceil 2r/3 \rceil}{3k+r}. \]
By evaluating the ceiling function for $r \in \{0, 1, 2\}$, one can easily verify that for all integers $p \ge 2$, we have the identity:
\[ R(p) = \frac{2}{3} + \frac{p \bmod 3}{3p}. \]

A fundamental property of finite abelian groups is that there exists an element whose order $m$ is exactly the exponent $\exp(A)$. 
Thus, $m \in \operatorname{Ord}(A)$. Furthermore, $p|m$ for every element order $p \in \operatorname{Ord}(A)$. So $m = p \cdot d$ for some integer $d \ge 1$. We claim that 
$R(m) \le R(p)$ for all $p \in \operatorname{Ord}(A)$. 
This is equivalent to showing:
\[ \frac{2}{3} + \frac{m \bmod 3}{3m} \le \frac{2}{3} + \frac{p \bmod 3}{3p},
\]
which simplifies to
$m \bmod 3 \le d \cdot (p \bmod 3)$.

We verify this latter inequality:
If $3 \mid p$, then $p \bmod 3 = 0$. Since $p$ divides $m$, we must also have $3 \mid m$, yielding $m \bmod 3 = 0$. The inequality simplifies to $0 \le 0$, which holds.
If $3 \nmid p$, then $p \bmod 3 \ge 1$. If $d = 1$, then $m = p$ and the equality holds trivially. If $d \ge 2$, then the right side is at least $2 \times 1 = 2$. Since $m \bmod 3 \in \{0, 1, 2\}$, its maximum possible value is 2, ensuring that $m \bmod 3 \le 2 \le d \cdot (p \bmod 3)$ unconditionally.

Thus, the cost ratio is strictly minimized by the maximum order $m$. The global minimum structural cost over the group is exactly $ \frac{|A|}{m} \left\lceil \frac{2m}{3} \right\rceil$. 
Thus, the cycle $C_n$ is $A$-FRC if and only if this minimum cost is at least $n+1$. 
\end{proof}

Before studying specific types of groups, we give a general sufficient condition based only on the group size.

\begin{corollary}\label{cor:universal-sufficient}
    Fix an integer $n \ge 2$, and let $A$ be a finite abelian group. If $|A| \ge \lfloor \frac{3n}{2} \rfloor + 1$, then the cycle $C_n$ is $A$-FRC. In particular, when $A$ is the cyclic group $\mathbb{Z}_k$, $C_n$ is $\mathbb{Z}_k$-FRC if and only if $k \ge \lfloor \frac{3n}{2} \rfloor + 1$.
\end{corollary}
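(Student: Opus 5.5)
The plan is to derive both claims directly from the characterization in Theorem~\ref{thm:universal-FRC}, which says that $C_n$ is $A$-FRC if and only if $M(\gamma)\ge n+1$ for every $\gamma\in A\setminus\{0_A\}$. Since $M(\gamma)$ is always an integer, it suffices to show $M(\gamma)>n$.

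\emph{General sufficient condition.} Fix $\gamma\neq 0_A$ and let $p:=\ord(\gamma)$. Using $\lceil 2p/3\rceil\ge 2p/3$ gives
\[ M(\gamma)=\frac{|A|}{p}\left\lceil\frac{2p}{3}\right\rceil\ \ge\ \frac{2|A|}{3}. \]
I then split by the parity of $n$.
\begin{itemize}
\item If $n$ is even, the hypothesis gives $|A|\ge 3n/2+1$, so $2|A|/3\ge n+2/3>n$.
\item If $n$ is odd, the hypothesis gives $|A|\ge (3n-1)/2+1=(3n+1)/2$, so $2|A|/3\ge n+1/3>n$.
\end{itemize}
In both cases $M(\gamma)>n$, hence $M(\gamma)\ge n+1$ for every nonzero $\gamma$. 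Theorem~\ref{thm:universal-FRC} then shows that $C_n$ is $A$-FRC. This argument works for all $n\ge 2$.

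\emph{Cyclic case.} Sufficiency is the special case just proved. For necessity, suppose $C_n$ is $\Z_k$-FRC and apply Theorem~\ref{thm:universal-FRC} with $\gamma=1$. This is legitimate for every $n\ge2$, so we do not need Theorem~\ref{thm:cycle-FRC-exponent-threshold}, which requires $n\ge3$. Since $\ord(1)=k$ and $|A|/k=1$, we get $M(1)=\lceil 2k/3\rceil\ge n+1$. As $n$ is an integer, this means $\lceil 2k/3\rceil>n$, which is equivalent to $2k/3>n$, that is, $k>3n/2$. For an integer $k$ this is exactly $k\ge\lfloor 3n/2\rfloor+1$.

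The only delicate step is the equivalence $\lceil x\rceil>n \iff x>n$ for integer $n$, together with the parity bookkeeping in the floor. Once $M(\gamma)\ge 2|A|/3$ is noted, I expect no real obstacle.
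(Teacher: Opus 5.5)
Your proof is correct and is essentially the paper's argument: both rest on the bound that every structural cost is at least $2|A|/3$ (the paper writes it as $d\lceil 2m/3\rceil\ge\lceil 2|A|/3\rceil$ with $m=\exp(A)$), and both get necessity in the cyclic case from an element of order $k$. The only difference is that you bound each $M(\gamma)$ directly via Theorem~\ref{thm:universal-FRC} rather than going through Theorem~\ref{thm:cycle-FRC-exponent-threshold}, which avoids that theorem's $n\ge 3$ hypothesis when $n=2$; the degenerate case $k=1$, where $\gamma=1$ is zero, is trivial because $C_n$ then has no nowhere-zero flow at all.
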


\begin{proof}
    Let $m:= \exp(A)$; this is the exponent of $A$. Since $m$ must divide the group order, there exists a positive integer $d$ such that $|A| = d \cdot m$. By Theorem~\ref{thm:cycle-FRC-exponent-threshold}, the structural cost of $A$ is precisely:
    \[\frac{|A|}{m} \left\lceil \frac{2m}{3} \right\rceil = d \left\lceil \frac{2m}{3} \right\rceil. \]
    For each real number $x$ and each positive integer $d$, 
    we have $d \lceil x \rceil \ge \lceil dx \rceil$. Applying this inequality to our cost function yields:
    \[ d \left\lceil \frac{2m}{3} \right\rceil \ge \left\lceil d \cdot \frac{2m}{3} \right\rceil = \left\lceil \frac{2|A|}{3} \right\rceil. \]
    Notice that the exponent of the cyclic group $\Z_{|A|}$ is simply the group's order $|A|$.  So $\lceil \frac{2|A|}{3} \rceil$ is exactly the structural cost of the cyclic group $\mathbb{Z}_{|A|}$. Thus, the structural cost of each finite abelian group is bounded below by the cost of the cyclic group of the same order. 
    
    To ensure $C_n$ is universally $A$-FRC, we require that $\lceil 2|A|/3\rceil \ge n+1$.
    Now the result follows by simple algebra.
    \[ \left\lceil \frac{2|A|}{3} \right\rceil \ge n+1 
    \iff \frac{2|A|}{3} > n 
    \iff |A| > \frac{3n}{2} 
    \iff |A| \ge \left\lfloor \frac{3n}{2} \right\rfloor + 1. \]
    \aftermath
\end{proof}
\bigskip

Theorem \ref{thm:cycle-FRC-exponent-threshold} directly gives the exact characterizations, of values of $n$ for which $C_n$ is $A$-FRC, for several common finite abelian groups $A$.

\begin{corollary}\label{cor:specific-groups}
    Let $n \ge 2$ be an integer. For the following finite abelian groups $A$, the cycle $C_n$ is $A$-FRC if and only if:
    \begin{itemize}
        \item $A \cong \mathbb{Z}_k$: \quad $|A| \ge \lfloor \frac{3n}{2} \rfloor + 1$.
        \item $A \cong \mathbb{Z}_2^k$: \quad $|A| \ge n + 1$. 
        \item $|A| \equiv 0 \pmod 3$: \quad $|A| \ge \left\lceil \frac{3(n+1)}{2} \right\rceil$.
    \end{itemize}
\end{corollary}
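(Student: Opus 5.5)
The plan is to specialize Theorem~\ref{thm:cycle-FRC-exponent-threshold} to each of the three families. That theorem says $C_n$ is $A$-FRC exactly when the structural cost
\[ \mathrm{cost}(A):=\frac{|A|}{m}\left\lceil \frac{2m}{3}\right\rceil, \qquad m=\exp(A), \]
is at least $n+1$. So in each case we only need to compute $m$, evaluate the cost in closed form, and solve the resulting inequality for $|A|$.

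Theorem~\ref{thm:cycle-FRC-exponent-threshold} is stated for $n\ge 3$, while the corollary allows $n=2$. For $n=2$ I would apply Theorem~\ref{thm:universal-FRC}, which holds for $n\ge 2$. The step in the proof of Theorem~\ref{thm:cycle-FRC-exponent-threshold} showing that $\min_{\gamma\ne 0_A}M(\gamma)$ equals $\mathrm{cost}(A)$ never uses $n$. Hence the same criterion, $\mathrm{cost}(A)\ge n+1$, holds for $n=2$ as well.

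\emph{Cyclic case.} For $A\cong\Z_k$ we have $m=k=|A|$, so $\mathrm{cost}(A)=\lceil 2k/3\rceil$. Exactly as in the proof of Corollary~\ref{cor:universal-sufficient}, the inequality $\lceil 2k/3\rceil\ge n+1$ holds if and only if $2k/3>n$. This is equivalent to $k>3n/2$, i.e.\ $k\ge\lfloor 3n/2\rfloor+1$.

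\emph{Elementary abelian $2$-group.} For $A\cong\Z_2^k$ with $k\ge1$, every nonzero element has order $2$, so $m=2$. Then
\[ \mathrm{cost}(A)=\frac{|A|}{2}\left\lceil \frac43\right\rceil=|A|, \]
and the criterion becomes $|A|\ge n+1$.

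\emph{Order divisible by $3$.} If $3\mid |A|$, Cauchy's theorem gives an element of order $3$. Every element order divides the exponent, so $3\mid m$. The ceiling then disappears:
\[ \mathrm{cost}(A)=\frac{|A|}{m}\cdot\frac{2m}{3}=\frac{2|A|}{3}. \]
The condition $2|A|/3\ge n+1$ is $|A|\ge 3(n+1)/2$, and since $|A|$ is an integer this is $|A|\ge\lceil 3(n+1)/2\rceil$.

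The computations themselves are routine. The only points needing care are the $n=2$ edge case handled above and the fact that $3\mid m$ whenever $3\mid|A|$. The latter could instead be read off from the primary decomposition of $A$ (some factor $\Z_{3^{j}}$ with $j\ge1$ must appear), which avoids citing Cauchy's theorem.
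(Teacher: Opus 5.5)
Your proposal is correct and follows essentially the same route as the paper, which specializes Theorem~\ref{thm:cycle-FRC-exponent-threshold} to each family (the cyclic case goes through Corollary~\ref{cor:universal-sufficient}, which is the same $\lceil 2k/3\rceil$ computation). You also fill in two points the paper leaves implicit: the $n=2$ case, which you handle by falling back on Theorem~\ref{thm:universal-FRC} since the minimization over $\gamma$ does not depend on $n$, and the fact that $3\mid|A|$ forces $3\mid\exp(A)$.
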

\begin{proof}
   The first point follows immediately from  
   Corollary~\ref{cor:universal-sufficient}.  The second follows from 
   Theorem~\ref{thm:cycle-FRC-exponent-threshold}, since $\Z_2^k$ has exponent $2$.
   The third point also follows from 
   Theorem~\ref{thm:cycle-FRC-exponent-threshold}, by substituting $m=3s$ for some integer $s$, and simplifying.
\end{proof}

Applying Corollary~\ref{cor:specific-groups} to small cycles ($n\in\{2, 3\}$) yields the explicit $A$-FRC thresholds for $2K_2$ and $C_3$. This naturally extends to the global flow-connectedness of any graph that contains a spanning triangularly connected subgraph. 
Here, a graph is \emph{triangularly connected} if for any two of its edges, there is a sequence of triangles $T_1, T_2, \dots, T_k$ such that the first triangle contains one edge, the last triangle contains the other, and any two consecutive triangles share at least one edge. A \emph{spanning} triangularly connected subgraph is simply a triangularly connected subgraph that contains all vertices of the graph. Common examples of triangularly connected graphs include complete graphs $K_n$ (for $n \ge 3$), wheel graphs, and maximal planar graphs. 

Note that $\mathbb{Z}_2$-flow-connectedness is trivial. Furthermore, we characterize $\mathbb{Z}_3$-flow-connectedness in Theorem~\ref{Z3-flow-main-thm}, and we show in Theorem~\ref{notZ4flow-connectedII} that no graph containing a triangle can be $\mathbb{Z}_4$-flow-connected. Together with the corollary below, this completely answers the $A$-flow-connectedness problem for such graphs over all groups $A$.

\begin{corollary}\label{AFRC-cor}
Let $A$ be an abelian group with $|A| \ge 4$. The following hold.
\begin{enumerate}[label=(\roman*)]
\item \label{2K_2}
The graph $2K_2$ is $A$-FRC. 

\item \label{C_3}
The cycle $C_3$ is $A$-FRC if and only if $|A| \ge 5$ or $A \cong \mathbb{Z}_2 \times \mathbb{Z}_2$. 

\item \label{triangle-connected subgraph}
Any graph containing a spanning triangularly connected subgraph is $A$-flow-connected if and only if $|A| \ge 5$ or $A\cong \mathbb{Z}_2 \times \mathbb{Z}_2$.
\end{enumerate}
\end{corollary}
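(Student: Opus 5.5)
Parts (i) and (ii) follow from Theorem~\ref{thm:universal-FRC} or Theorem~\ref{thm:cycle-FRC-exponent-threshold}, after a short case check on groups with $|A|\ge 4$. Part (iii) is then an induction that contracts triangles and digons, using Theorem~\ref{AFRC-thm}\ref{AFRC-thm-2}.

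For (i), $2K_2$ is the cycle $C_2$, so Theorem~\ref{thm:universal-FRC} applies with $n=2$ and we need $M(\gamma)\ge 3$ for every $\gamma\ne 0_A$. Write $p=\ord(\gamma)$. Since $p\ge 2$, we have $\lceil 2p/3\rceil\ge 2$, so $M(\gamma)\ge 2|A|/p$. If $p<|A|$ then $|A|/p\ge 2$ and $M(\gamma)\ge 4$. If $p=|A|\ge 4$ then $M(\gamma)=\lceil 2|A|/3\rceil\ge 3$.

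For (ii), we use Theorem~\ref{thm:cycle-FRC-exponent-threshold} with $n=3$, so the condition is $\frac{|A|}{m}\lceil 2m/3\rceil\ge 4$, where $m=\exp(A)$. If $|A|\ge 5$, Corollary~\ref{cor:universal-sufficient} gives the result, since $\lfloor 9/2\rfloor+1=5$. If $|A|=4$, either $A\cong\Z_4$, where $\lceil 8/3\rceil=3<4$ and $C_3$ fails, or $A\cong\Z_2\times\Z_2$, where $m=2$ and the cost is $2\cdot 2=4$, so $C_3$ succeeds.

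For (iii), sufficiency works as follows. Let $H$ be a spanning triangularly connected subgraph of $G$, and induct on $|V(G)|$. Pick a triangle $T$ of $H$ and contract it; by (ii) and Theorem~\ref{AFRC-thm}\ref{AFRC-thm-2}, this preserves $A$-flow-connectedness in both directions.

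The main obstacle is making sure the image of $H$ in $G/T$ is still spanning and triangularly connected, up to loops and parallel edges. A triangle sharing one edge with $T$ collapses to a digon, and digons are handled by (i). So I would instead contract all of $H$ in one sweep along a triangle sequence. First contract $T_1$. Each subsequent triangle $T_j$ that shares an edge with an already-contracted triangle now appears in the current graph as a digon: its shared edge became a loop, and its other two edges join the contracted vertex to a new vertex, or are loops if both endpoints are already contracted. Contract that digon, which is $A$-FRC by (i), and delete loops, which cannot affect flow-connectedness since a loop is its own cycle. Triangular connectivity ensures every edge of $H$ lies in some triangle reachable this way, and spanning (with $H$ connected via its edges, or $|V|=1$) ensures all vertices are absorbed. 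The end result is a single vertex with loops, which is trivially $A$-flow-connected. Lifting back through Theorem~\ref{AFRC-thm}\ref{AFRC-thm-2} at each step gives that $G$ is $A$-flow-connected.

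For necessity, when $|A|\le 4$ and $A\not\cong\Z_2\times\Z_2$: if $A\cong\Z_4$, Theorem~\ref{notZ4flow-connectedII} (as extended in the surrounding discussion to graphs containing a triangle) handles it. We are told $|A|\ge 4$, so no other case remains.
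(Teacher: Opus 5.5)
\paragraph{Where the proof breaks.} Your treatment of (i), (ii) and the sufficiency half of (iii) is sound. It follows the paper's route: apply the cycle-FRC characterization at $n=2,3$, then contract via Theorem~\ref{AFRC-thm}\ref{AFRC-thm-2}. Your digon sweep actually fills in a step the paper leaves implicit.

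The gap is the necessity half of (iii). Theorem~\ref{notZ4flow-connectedII} is proved only for $3$-edge-colorable \emph{cubic} graphs. Its invariant uses that, besides $xy$, each of $x$ and $y$ has exactly two incident edges, both of value $1$ and both directed into (or both out of) that vertex. That is what forces every cycle through $xy$ to carry the values $1,2,3$. There is no valid extension of this to all graphs containing a triangle, and the per-graph ``only if'' you are trying to prove is false:
\begin{itemize}
\item $C_3$ is triangularly connected. Every $\Z_4$-flow on it is constant with respect to a cyclic orientation, and any two such flows differ on the whole cycle. So $\F(C_3,\Z_4)\cong K_3$ is connected.
\item $K_5$ is triangularly connected and Eulerian. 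Theorem~\ref{thm:Eulerian 2k-flow-connetced} with $k=2$ makes it $4$-flow-connected. Reducing mod $4$, with Lemma~\ref{Zk-k-equiv-lem} for surjectivity, makes it $\Z_4$-flow-connected.
\end{itemize}
The paper's remark just before the corollary (``no graph containing a triangle can be $\Z_4$-flow-connected'') makes the same overstatement, so citing that surrounding discussion does not close the step.

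\paragraph{What can be proved.} The true statement, provable with the paper's tools, is the class-level one. If every graph with a spanning triangularly connected subgraph is $A$-flow-connected (with $|A|\ge 4$), then $|A|\ge 5$ or $A\cong\Z_2\times\Z_2$. A single witness suffices for this. $K_4$ is cubic, $3$-edge-colorable, triangularly connected, and contains a triangle, so Theorem~\ref{notZ4flow-connectedII} applies to it verbatim and shows that $K_4$ is not $\Z_4$-flow-connected. You should state (iii) in that form: sufficiency for every such graph via your contraction sweep, and sharpness of the group condition via $K_4$. Do not rely on an extension of Theorem~\ref{notZ4flow-connectedII} that does not hold.
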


\section{\texorpdfstring{$\bm{A}$}{A}-Flow-connectedness of 4-Edge-connected Graphs}
\label{4edge-sec}

Tutte conjectured that every $2$-edge-connected graph has a nowhere-zero $A$-flow for every finite abelian group $A$ with $|A|\ge 5$.  This was proved by Jaeger~\cite{jaeger} when $|A|\ge 8$ and by Seymour~\cite{seymour} when $|A|\ge 6$.  Tutte also conjectured that every $4$-edge-connected graph has a nowhere-zero $A$-flow whenever $|A|\ge 3$; Jaeger proved this when $|A|\ge 4$.
For flow reconfiguration, Esperet et al.~\cite{esperet-et-al} proved the following large-group theorem.

\begin{thmA}
    \label{esperet-main-thm}
    If $G$ is a $2$-edge-connected graph and $A$ is a finite abelian group, then $G$ is $A$-flow-connected whenever $|A|\ge (56^8\cdot 27+28)^{45}+1\approx 1.15\times 10^{694}.$
\end{thmA}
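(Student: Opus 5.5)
The plan is to avoid a direct global argument and instead move every nowhere-zero $A$-flow to a \emph{generic} flow, then connect generic flows to one another; the huge bound on $|A|$ is spent making room for generic choices. First I would reduce to a structured class. Contracting $2$-edge-cuts and parallel edges, exactly as in the proof of Theorem~\ref{reduce-to-cubic-lem} (whose proof only uses $2K_2$ being $A$-FRC and $C_4$ being $A$-connected, both valid for large $|A|$ by Corollary~\ref{cor:universal-sufficient}), it suffices to treat simple $3$-edge-connected cubic graphs $G$. For these I would use Seymour's decomposition from the $6$-flow theorem. It gives a sequence of vertex-disjoint cycles $C_1,\dots,C_t$ and a nested sequence of subgraphs. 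Each step contracts the current subgraph together with $C_i$, and the next piece is attached by at most two further edges. This organizes $G$ as a $\Z_2$-type skeleton (the even subgraphs of the $C_i$) plus a ``spanning-tree-like'' remainder of bounded complexity per step.

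The second step is to decompose $A$ when it is large. Either $A$ contains a large cyclic or elementary subgroup, or $A$ can be written as $A_1\times A_2$ with both factors still large. Using product coordinates $f=(f_1,f_2)$, a flow is nowhere-zero as soon as one coordinate is nonzero on each edge. So one coordinate can be used to ``hold'' nowhere-zeroness while the other is reconfigured freely, exactly as in the proof that Eulerian graphs are $(\Z_{2k}\times A)$-flow-connected. I expect to iterate this splitting a bounded number of times, once per layer of the Seymour-type decomposition. That iteration is where an exponent like $45$ and a base like $56^8\cdot 27+28$ would arise: each layer requires the current factor to exceed a fixed polynomial in the number of forbidden values at a vertex of a cubic graph.

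The third step, within a layer, is to show that any flow can be moved to a canonical one by changing values along single cycles. The plan is to process the cycles $C_i$ one at a time. On $C_i$, adding $\gamma$ is forbidden for at most $|E(C_i)|$ values, which is not bounded. So instead I would add $\gamma$ along short cycles through the at most two attaching edges, choosing $\gamma$ outside a bounded forbidden set. This mimics the $(\beta_1,\beta_2)$-convertibility argument of Theorem~\ref{thm:universal-FRC}, but on pieces of bounded size rather than whole cycles. Once every flow is reduced to one whose restriction to each layer is in a normal form, connectivity follows by concatenating the moves, as in Theorem~\ref{AFRC-thm}.

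The main obstacle is that the cycles $C_i$ in the decomposition can be arbitrarily long. Corollary~\ref{cor:universal-sufficient} therefore cannot be applied to them directly; it would give only the $\Omega(\log n)$ bound of Theorem~\ref{thm:bridgeless-log-bound}. The way out I would try first is to never change values on a long cycle all at once. Instead I would use the second group coordinate to make the first coordinate vanish edge by edge along $C_i$. Each move is then supported on a cycle meeting $C_i$ in a path and leaving it through attachments. The number of values forbidden to each move would be bounded by a constant depending only on local structure, which is what should yield a constant rather than logarithmic bound.
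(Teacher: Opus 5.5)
The paper does not prove Theorem~\ref{esperet-main-thm}. It quotes it from Esperet et al.~\cite{esperet-et-al} and adapts their method in Section~\ref{4edge-sec}. Measured against that method, your proposal has a genuine gap. The step that carries the whole theorem is reaching a ``generic'' flow by moves whose forbidden sets have constant size; you only announce it, and the mechanism you sketch does not work.

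A $3$-edge-connected cubic graph can have arbitrarily large girth, so there are no short cycles through the attaching edges. Every move is along a long cycle, and on each edge of that cycle where your holding coordinate is $0$, the added value is restricted. Working ``edge by edge along $C_i$'' does not help, because each single move is still supported on a whole cycle of $G$, and the forbidden set $\{-f(e): e\in E(C)\}$ can have up to $|E(C)|$ elements. A constant bound would need the holding coordinate to be nonzero on all but $O(1)$ edges of every cycle you use. That is exactly the control over long cycles the proof is supposed to produce.

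Likewise, Seymour's decomposition has a number of steps that grows with $|V(G)|$, so ``one splitting of $A$ per layer'' is not a bounded iteration. The constants $45$ and $56^8\cdot 27+28$ are matched to the statement rather than derived.

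Two ideas are missing.

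\textbf{Contraction--lifting induction.} This is the engine of Lemmas~\ref{2group-flow-thm} and~\ref{away-from-boundary-lem}. A counting argument shows that the currently ``good'' edges (values outside a subgroup, or $s$-safe for a suitable level $s$) are dense enough for Lemma~\ref{short-subdivision-density-lem}. That lemma yields, inside the good edges, a short cycle or a short subdivision $H$ of a $3$-edge-connected graph. You contract $H$ and apply induction to $G/H$. You then lift each move of the resulting sequence, first re-adjusting the values on $H$ using its $(A,k)$-flow-choosability (Lemma~\ref{subgroup-flow-choosable-lem}, Theorem~\ref{LT-lem}). The constant bound comes from choosability of a bounded local structure plus induction outside it, not from bounded-size moves.

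\textbf{The cyclic case.} You give no argument when $A$ has a huge cyclic prime-power factor, e.g.\ $A=\Z_p$ with $p$ a huge prime. Then $A$ has no nontrivial direct decomposition, so there is no second coordinate to hold nowhere-zeroness. This case needs a genuinely different argument: push the $\Z_p$-values out of an interval around $0$ level by level with multipliers $q^s$, then reconfigure via integer flows (Lemmas~\ref{away-from-boundary-lem} and~\ref{away-connected-lem}). Indeed, the base $56^8\cdot27+28$ of the bound evidently mirrors the threshold $P_0+1=56^2\cdot27+28$ of Corollary~\ref{ZpxA-cor}. More levels are presumably needed because a cubic graph has only $\frac32|V(G)|$ edges.
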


In this section we prove a stronger result for $4$-edge-connected graphs.  The proof has two independent parts.  First, a direct product argument shows that $G$ is $(A_1\times A_2)$-flow-connected whenever each factor has high enough ``local flow-choosability''.  Second, a separate argument handles groups with a sufficiently large cyclic factor of order a prime power.  Combining the two cases gives the following theorem.

\begin{theorem}
    \label{4-edge-connected-thm}
    If $G$ is a $4$-edge-connected graph and $A$ is a finite abelian group, then $G$ is $A$-flow-connected whenever
    $
        |A|\ge 62(56^2\cdot 27+27)+1=5,251,339.
    $
\end{theorem}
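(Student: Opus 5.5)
We will follow the two-part strategy sketched just before the statement, adapting the machinery of Esperet et al.\ behind Theorem~\ref{esperet-main-thm}. In their argument the enormous bound comes from two places: an exponent $45$ from decomposing a $2$-edge-connected graph into few pieces carrying ``nice'' flows, and a base quantity $56^8\cdot 27+28$ measuring how many flow values must be available locally so that each reconfiguration step can be routed. For a $4$-edge-connected graph $G$ we instead use the Nash-Williams--Tutte theorem: $G$ contains two edge-disjoint spanning trees $T_1,T_2$. Every edge outside $T_i$ closes a fundamental cycle in $T_i$, so every flow on $G$ is determined by its values on the cotree edges. We exploit this much as Jaeger's $4$-flow theorem does: the group splits into two ``coordinates'', each controlled by one tree. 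This should replace the power $45$ by roughly a power $2$, which matches the shape $56^2\cdot 27+27$ in the statement.

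\textbf{Product step.} We will define a local choosability parameter $\lambda(A)$ for a group: the largest $t$ such that, whenever each edge forbids at most $t$ values, any $A$-flow can still be reconfigured, one cycle at a time, while avoiding the forbidden values. Esperet et al.\ essentially prove that $|A|\ge 56^2\cdot 27+27$ (after squaring rather than taking the $8$th power) gives enough room. We then show that if $A=A_1\times A_2$ and both factors are large enough in this sense, then $G$ is $(A_1\times A_2)$-flow-connected. Write a flow as $(f_1,f_2)$. We first use $T_1$ to reconfigure the $A_1$-coordinate to a canonical flow, keeping the pair nowhere-zero by leaving the $A_2$-coordinate nonzero on the relevant edges. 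We then use $T_2$ to reconfigure the $A_2$-coordinate in the same way. A pair is nonzero as soon as one coordinate is nonzero, which gives the flexibility needed: in each phase we only need to avoid the edges where the other coordinate vanishes. Each elementary change is supported on a fundamental cycle, hence on a cycle or an even subgraph, so by the remark on $\F^*(G,A)$ this suffices.

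\textbf{Cyclic step and combination.} Write $A\cong\prod\Z_{p_i^{n_i}}$. If $A$ has no cyclic prime-power factor larger than a threshold $\approx 62$, we group the factors into two parts $A_1,A_2$ of comparable size, each of order at least $56^2\cdot 27+27$; this is possible because $|A|\ge 62(56^2\cdot 27+27)+1$ and every factor has order at most $62$. We then apply the product step. Otherwise $A$ has a cyclic factor $\Z_q$ with $q$ a large prime power, and we handle this case separately. Write $A=\Z_q\times B$ and argue directly: for $q$ large, the nonzero elements of $\Z_q$ leave enough room that we can shift values along fundamental cycles of $T_1$, steering each step around at most a bounded number of forbidden residues per edge. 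We use $T_2$ and induction, or the product step with $B$ as the second factor when $|B|$ is large, to handle the $B$-coordinate.

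\textbf{Main obstacle.} The hardest part will be the product step, and specifically verifying that the local choosability inequality of Esperet et al.\ transfers with only two trees. Their constants come from a specific lemma on ``$k$-good'' flows, and we must check that the required count of available values becomes $56^2\cdot 27+27$ per factor. We also need to handle the bookkeeping when a coordinate is zero on many edges at once. We would first try to follow their proof line by line, substituting the two-tree decomposition for their $45$-part decomposition.
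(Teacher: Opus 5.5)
Your case analysis does not cover the groups in the statement. You ask for a splitting $A=A_1\times A_2$ with both factors of order at least $P_0:=56^2\cdot27+27$. That forces $|A|\ge P_0^2\approx 7\times10^9$. For instance, $A=\Z_2^{23}$ has order $8{,}388{,}608\ge 62P_0+1$, yet in every splitting one factor has order at most $2^{11}$. Your cyclic branch has the opposite problem: it fires as soon as some prime-power factor exceeds about $62$, which is far too small for any ``bounded number of forbidden residues'' argument. Its fallback (the product step with $B$) again needs both factors to have order at least $P_0$.

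The paper distributes the constants the other way round. The product step (Lemma~\ref{2group-flow-thm}) needs each factor only to be \emph{admissible}, i.e.\ $(A_i,1)$-flow-choosable on $(\le3)$-cycles and on $(\le1)$-subdivisions of $3$-edge-connected graphs. This holds once $|A_i|\ge 63$ (Corollary~\ref{large-group-admissible-cor}). The threshold $P_0$ is used only when a single prime-power factor $\Z_p$ has $p>P_0$ (Corollary~\ref{ZpxA-cor}). Otherwise one picks a direct factor $B$ with $63\le |B|\le P_0$: either a single factor of order in $[63,P_0]$, or a minimal subproduct of factors of order at most $62$, hence of order at most $62^2$. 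Then $|A|\ge 62P_0+1$ forces the complement to have order at least $63$.

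The square in $56^2$ is also unrelated to two spanning trees. It comes from $p>q^2\cdot27+27$ with $q\in\{55,56\}$ coprime to $p$. Three scales $q^0,q^1,q^2$ make each edge ``safe'' at two of them, leaving at least $\frac23|E(G)|\ge\frac43|V(G)|$ safe edges. That is exactly the density needed in Corollary~\ref{sparse-good-subgraph-unified}(2). The $27$ comes from integer $28$-flow patches.

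Second, the product step you describe omits its essential idea. Shifting the $A_1$-coordinate along fundamental cycles of $T_1$ gives no control on the edges where $f_2=0$. For an arbitrary starting flow, that zero set is unrelated to $T_1$ and $T_2$, and a shift can make $f_1$ vanish there. This ``bookkeeping'' is the entire difficulty, and the tree packing does not resolve it.

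The paper's mechanism is local and inductive. Let $E_i:=\{e: f(e)\in A_i\}$, and add $a_1+a_2$ along cycles of $E_1\cup E_2$ until $|E_1\cup E_2|<|V(G)|$. Then one of them, say $E_2$, has fewer than $|V(G)|/2$ edges. So $E(G)\setminus E_2$ has more than $\frac32|V(G)|$ edges and contains a subgraph $H$, either a $(\le3)$-cycle or a $(\le1)$-subdivision of a $3$-edge-connected graph, with all values off $A_2$. Contract $H$ and apply induction.

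Each move of $G/H$ is then lifted in two preparatory stages. First add an $A_2$-flow on $H$ pushing its values off $A_1$. Then add an $A_1$-flow on $H$ so that the post-move values avoid $A_2$. Both flows exist by $(A_i,1)$-flow-choosability of $H$, since each edge forbids at most one element of each $A_i$. Adding them cycle by cycle preserves cosets, and hence nonzeroness.

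The cyclic case likewise needs a concrete mechanism, which your sketch does not supply. The paper first pushes the $\Z_p$-coordinates off $[-27,27]$ by this multi-scale contract-and-lift argument (Lemma~\ref{away-from-boundary-lem}). It then connects all such flows through a common $4$-flow (Lemma~\ref{away-connected-lem}).
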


\subsection{Flow decompositions and dense subgraphs}

We first recall some basic facts on flow decomposition that we will use to construct our reconfiguration sequences. For a graph $G$, let $c(G)$ denote its number of components.

\begin{lem}
\label{decompose-flow-lem}
Let $G$ be a graph, let $A$ be an abelian group, and let $f$ be an $A$-flow on $G$ (not necessarily nowhere-zero).
Now $f$ can be written as a sum of at most $|E(G)|-|V(G)|+c(G)$ flows supported on cycles, where $c(G)$ is the number of components in $G$.
\end{lem}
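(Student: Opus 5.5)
The plan is a spanning-forest (cycle-space basis) argument. Since $f$ is a flow, it lies in the $A$-cycle space of $G$, which is generated by the fundamental cycles of a spanning forest. First I would reduce to the case that $G$ is connected: $f$ restricts to a flow on each component, the number of cycles needed adds across components, and $\sum_i(|E(G_i)|-|V(G_i)|+1)=|E(G)|-|V(G)|+c(G)$. So assume $G$ is connected.

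Fix the orientation $D$ of $f$ and choose a spanning tree $T$ of $G$; there are exactly $|E(G)|-|V(G)|+1$ non-tree edges $e_1,\dots,e_r$. Each $e_j$ closes a unique cycle $C_j\subseteq T+e_j$. Orient $C_j$ cyclically so that it agrees with $D$ on $e_j$. Let $\phi_j$ be the flow on $G$ supported on $C_j$ that has value $f(e_j)$ on each edge of $C_j$ traversed forward (relative to $D$) and value $-f(e_j)$ on each edge traversed backward. This is clearly an $A$-flow (possibly zero if $f(e_j)=0$, which is allowed since nowhere-zero is not required).

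Now set $g:=f-\sum_{j=1}^r\phi_j$. This is again an $A$-flow, and $g(e_j)=f(e_j)-f(e_j)=0$ for every non-tree edge, because $e_j$ lies only on $C_j$ among the fundamental cycles. So $g$ is supported on the forest $T$. A flow supported on a forest is identically zero: if some edge had nonzero value, then consider the subforest of edges with nonzero $g$ and take a leaf $v$ of it. Conservation at $v$ involves exactly one nonzero term, which is a contradiction. Hence $f=\sum_j\phi_j$, which is a sum of $r$ cycle-supported flows, as required. (Flows $\phi_j$ that are zero may be discarded, which keeps the count at most $r$.)

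There is no real obstacle here. The only points needing care are the sign conventions when $C_j$ traverses edges against $D$, and the leaf argument showing that a forest carries no nonzero flow. Both are routine.
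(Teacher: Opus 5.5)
Your proof is correct and is essentially the paper's argument. Both fix a spanning forest, cancel the value on each non-forest edge with a single cycle-flow, and conclude because a forest supports no nonzero flow. The only difference is that you subtract all fundamental cycles at once, which gives the explicit formula $f=\sum_j \phi_j$. The paper instead repeatedly picks some cycle inside the support of the current flow, which has the mild extra feature that the support never grows during the process.
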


\begin{proof}
Fix a spanning forest $F$ of $G$. As long as the support of the current flow contains a cycle $C$, choose an edge
$e\in E(C)\setminus E(F)$ and subtract from the current flow the cycle-flow on $C$ with value equal to the current flow value on $e$.
This gives $e$ flow value 0 and never changes the flow value on any edge of
$E(G)\setminus E(F)$ that already has flow value $0$.

Since $F$ is a spanning forest, it has $|V(G)|-c(G)$ edges. Hence
$|E(G)\setminus E(F)|=|E(G)|-|V(G)|+c(G)$.
Therefore after at most $|E(G)|-|V(G)|+c(G)$ steps, all edges outside $F$ have value zero.
The remaining support is contained in a forest, and hence is empty, since a forest admits no nonzero flow.
\end{proof}

\begin{lem}
    \label{decompose-flow-lem-II}
    Fix an integer $k\ge 2$.  Every integer $k$-flow $f$ in a graph $G$, with 0 values allowed, is the sum of integer $k$-flows $(f_i)_{1\le i\le s}$ supported on cycles such that, for every edge $e$ and every $j\in[s]$, the partial sum
    $
        \sum_{1\le i\le j} f_i(e)
    $
    lies between $0$ and $f(e)$.
\end{lem}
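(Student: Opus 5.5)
We argue by a conformal decomposition, reducing to the (unsigned) case of nonnegative flows via reorientation. First, reverse the orientation of every edge $e$ with $f(e)<0$ and replace $f(e)$ by $-f(e)$. Since reversing an edge and negating its value describes the same flow, and the condition ``the partial sum lies between $0$ and $f(e)$'' is invariant under simultaneously negating $f(e)$ and all $f_i(e)$, it suffices to prove the lemma when $f(e)\ge 0$ for every edge $e$. Edges with $f(e)=0$ may be deleted: every $f_i$ we construct will vanish on them. The proof is then by induction on $\sum_{e}f(e)$. The base case $\sum_e f(e)=0$ is the empty sum, $s=0$.

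For the induction step, let $H$ be the support of $f$, oriented by $D$. Since $f$ is a nonnegative flow, every vertex of $H$ with positive inflow also has positive outflow. Following out-arcs of $H$ from any vertex therefore eventually revisits a vertex, and this yields a directed cycle $C$ in $H$. This is the same walk argument used in the proof of Lemma~\ref{lem:FlowDecomposition}.

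Let $f_1$ be the cycle flow that is $1$ on each arc of $C$ (in the direction of $D$) and $0$ elsewhere. Then $f_1$ is an integer $k$-flow (with zeros allowed), since all its values lie in $\{0,1\}$ and $k\ge 2$. Moreover, $0\le f_1(e)\le f(e)$ for every $e$, because $f(e)\ge 1$ on $C$.

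Now $f-f_1$ is again an integer flow with values in $[0,f(e)]$, so it is a $k$-flow, and its total value is smaller. By induction, $f-f_1=\sum_{i\ge 2}f_i$ with partial sums of $f-f_1$ lying in $[0,f(e)-f_1(e)]$. Adding $f_1(e)\in[0,f(e)]$ to each such partial sum gives partial sums of $f$ lying in $[f_1(e),f(e)]\subseteq[0,f(e)]$. The first partial sum, $f_1(e)$ alone, also lies in this range, which completes the induction.

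There is no serious obstacle here. The only point needing care is bookkeeping the signs: after undoing the reorientation, each $f_i$ must be read in the original orientation, and the bound ``between $0$ and $f(e)$'' must be understood as the closed interval with those endpoints, in either order. If the intended statement requires the $f_i$ to be arbitrary-valued cycle flows rather than unit ones, the unit-cycle version above already suffices. One could alternatively subtract $\min_{e\in C}f(e)$ along $C$ to obtain fewer terms, and the same partial-sum argument applies.
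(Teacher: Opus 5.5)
Your proposal is correct and is essentially the paper's proof: reorient so that $f\ge 0$, repeatedly subtract a unit flow on a directed cycle in the support (so each edge value decreases monotonically from $f(e)$ to $0$), and then undo the reorientation. Phrasing this as an induction on $\sum_e f(e)$ is only a cosmetic difference.
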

\begin{proof}
    We reverse the underlying orientation of every edge on which $f$ is negative, and negate the corresponding value.  Thus we assume that $f(e)\ge 0$ for all edges $e$.  If $f$ is not identically 0, then the support of $f$ containswa directed cycle $C$.  Subtract the unit flow on $C$ and repeat.  The values on each edge decrease monotonically from $f(e)$ to $0$. Now reversing all edges with initial flow value negative gives the desired decomposition for the original orientation.
\end{proof}

\begin{lem}
    \label{k-Zk-hom-lem}
    Let $f$ and $f'$ be two nowhere-zero integer $k$-flows, and let $g$ and $g'$ be their images as nowhere-zero $\Z_k$-flows.  If $f$ and $f'$ are in the same component of $\F(G,k)$, then $g$ and $g'$ are in the same component of $\F(G,\Z_k)$.
\end{lem}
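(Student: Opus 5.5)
It suffices to show that the reduction-mod-$k$ map $\rho$, defined by $\rho(h)(e)\equiv h(e)\pmod k$ with the orientation fixed, sends adjacent vertices of $\F(G,k)$ to vertices that are equal or adjacent in $\F(G,\Z_k)$. This is the same argument used for the implication \ref{Equiv1}$\Rightarrow$\ref{Equiv2} in Theorem~\ref{thm:mod-k-orientation}, and it is what the sentence after Conjecture~\ref{conjA} refers to. The plan is first to check that $\rho$ is well defined on nowhere-zero flows, then to check that it preserves adjacency, and finally to conclude along a walk.

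First, I will verify that $\rho$ is well defined. Fix an orientation. If $h$ is a nowhere-zero integer $k$-flow, then $0<|h(e)|<k$ for every edge $e$, so $h(e)\not\equiv 0\pmod k$ and $\rho(h)(e)\neq 0$. Conservation at each vertex holds over $\Z$, so it also holds in $\Z_k$. Hence $\rho(h)$ is a nowhere-zero $\Z_k$-flow. The notion of ``same'' flow is compatible with $\rho$: reversing an edge negates its value in both settings. Therefore $g=\rho(f)$ and $g'=\rho(f')$ are genuine vertices of $\F(G,\Z_k)$.

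Next comes the key step, preserving adjacency. Suppose $h_1$ and $h_2$ are adjacent in $\F(G,k)$, so there is a cycle $C$ with $h_1(e)=h_2(e)$ for every $e\notin E(C)$. Then for every $e\notin E(C)$ we also have $\rho(h_1)(e)=\rho(h_2)(e)$. This means $\rho(h_1)$ and $\rho(h_2)$ agree off $C$, and $\operatorname{supp}(\rho(h_1)-\rho(h_2))\subseteq E(C)$. If the two images are distinct, they are adjacent in $\F(G,\Z_k)$ by definition, because adjacency only requires agreement outside some cycle. If the images coincide, nothing needs to be done. I expect no real obstacle here. The one point to watch is that the definition of adjacency demands agreement outside a cycle, not that the support of the difference be exactly a cycle, so a partial collapse of the difference modulo $k$ causes no trouble.

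Finally, take a walk $f=h_0,h_1,\dots,h_s=f'$ in $\F(G,k)$. By the previous step, each consecutive pair $\rho(h_{i-1}),\rho(h_i)$ is either equal or adjacent. Deleting the repeated consecutive entries from the sequence $\rho(h_0),\dots,\rho(h_s)$ gives a walk from $g$ to $g'$ in $\F(G,\Z_k)$. Hence $g$ and $g'$ lie in the same component of $\F(G,\Z_k)$.
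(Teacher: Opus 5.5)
Your proposal is correct and follows the same approach as the paper: reduction modulo $k$ sends each reconfiguration step in $\F(G,k)$ either to a step in $\F(G,\Z_k)$ or to a loop, so walks map to walks once repeated entries are deleted. Your extra checks, namely well-definedness on nowhere-zero flows and compatibility with the ``same''-flow convention, are details the paper leaves implicit.
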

\begin{proof}
    Taking every integer flow value modulo $k$ maps each valid reconfiguration step in $\F(G,k)$ either to a valid reconfiguration step in $\F(G,\Z_k)$ or to a loop.  Thus every walk in $\F(G,k)$ maps to a walk in $\F(G,\Z_k)$ after deleting loops.
\end{proof}

A \emph{$(\le r)$-subdivision} of a graph $H$ is formed from $H$ by replacing each edge by a path of length at most $r+1$. For a vertex subset $X$ of a graph $G$, let $\partial(X)$ denote the edge-cut between $X$ and $X^c$. The following lemma and its proof are inspired by Lemma~6.1 {in~\cite{esperet-et-al}}.

\begin{lem}[Dense subgraphs contain short subdivisions]
    \label{short-subdivision-density-lem}
    Fix an integer $r\ge 1$.  Let $G$ be a (multi)graph with $|V(G)|\ge r+1$.  If
    \begin{equation}
        |E(G)|>\frac{r+2}{r+1}|V(G)|-2,
        \label{density-ineq}
    \end{equation}
    then $G$ has either a $(\le 2r+1)$-cycle or a $(\le r)$-subdivision of a $3$-edge-connected graph.
\end{lem}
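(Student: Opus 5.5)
We argue by contradiction: take a counterexample $G$ with $|V(G)|+|E(G)|$ minimum. Then $G$ satisfies \eqref{density-ineq}, has $|V(G)|\ge r+1$, has no cycle of length at most $2r+1$ (in particular no loops or parallel edges, since $r\ge1$), and contains no $(\le r)$-subdivision of a $3$-edge-connected graph. The plan has three steps. First, reduce to the case that $G$ is connected and $2$-edge-connected. Second, suppress long threads of degree-$2$ vertices. Third, show that the resulting structure is itself a short subdivision of a $3$-edge-connected graph.

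\emph{Step 1: reductions.} If $G$ has a vertex $v$ of degree at most $1$, delete it. The left side of \eqref{density-ineq} drops by at most $1$ and the right side drops by $\frac{r+2}{r+1}>1$, so the inequality persists, giving a smaller counterexample. (We must check that $|V(G-v)|\ge r+1$; if instead $|V(G)|=r+1$, then \eqref{density-ineq} gives $|E(G)|>r$, so $G$ has a cycle of length at most $r+1$ and we are done directly.) Similarly, if $G$ is disconnected or has a bridge, split it into pieces $G_1,G_2$. The constant $-2$ is subadditive in the right direction:
\[\textstyle\sum_i\big(\tfrac{r+2}{r+1}|V(G_i)|-2\big)\le \tfrac{r+2}{r+1}|V(G)|-4,\]
while losing a bridge costs only one edge. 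Hence some piece inherits \eqref{density-ineq}. Small pieces, with fewer than $r+1$ vertices, have few edges, and I expect them to be handled by the same sort of count. So $G$ is $2$-edge-connected with $\delta(G)\ge2$.

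\emph{Step 2: long threads.} Call a maximal path whose internal vertices all have degree $2$ a \emph{thread}. Suppose some thread has at least $r+1$ internal vertices. Contract one of its edges. This lowers $|V|$ and $|E|$ by $1$ each, while the right side drops by $\frac{r+2}{r+1}>1$, so \eqref{density-ineq} still holds. The contraction does not create a cycle of length at most $2r+1$, because cycles through the thread lose only one edge and every cycle had length at least $2r+2$. Moreover, any $(\le r)$-subdivision in the contracted graph lifts to $G$ unless it uses the thread as one of its subdivided edges. So I would refine the minimality to forbid this, or note that threads of length more than $r+1$ can be shortened without harming the conclusion. Consequently every thread has at most $r$ internal vertices, and $G$ is a $(\le r)$-subdivision of the graph $H$ obtained by suppressing all degree-$2$ vertices.

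\emph{Step 3: $H$ must be $3$-edge-connected.} Let $H$ have $n_3$ branch vertices and $m$ edges, each subdividing at most $r$ times. Then $|V(G)|\le n_3+rm$ and $|E(G)|=|V(G)|-n_3+m$. Combining this with \eqref{density-ineq} and $\delta(H)\ge3$ should force the average degree of $H$ to be large enough to guarantee a $3$-edge-connected piece. Concretely, I would apply the same minimal-counterexample argument inside $H$. If $H$ has a $2$-edge-cut, the cut separates $G$ into two sides, each of which (after replacing the other side by a thread) satisfies a density inequality and has fewer vertices. The extra thread has at most $2r+1$ vertices, or else creates a cycle of length at most $2r+1$; this is exactly where the girth $2r+2$ appears. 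Minimality then yields the desired subdivision or short cycle in one side, and it lifts back to $G$.

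\emph{Main obstacle.} I expect the hardest step to be this $2$-edge-cut case, together with the bookkeeping showing that the constant $-2$ and the factor $\frac{r+2}{r+1}$ survive the split. The first thing to try is to reroute the replacement thread through the other side along a shortest path, which has length at most $r+1$ if that side has no short cycle.
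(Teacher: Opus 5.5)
Your Step 1 is essentially the paper's reduction. Steps 2 and 3 do not go through as written.

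\textbf{Step 2.} Contracting an edge of a long thread \emph{can} create a cycle of length at most $2r+1$. A cycle of length exactly $2r+2$ through the thread becomes a $(2r+1)$-cycle. For $r=1$: a thread $axyb$ together with an edge $ab$ forms a $4$-cycle, and contracting $xy$ yields a triangle. So the contracted graph need not be a counterexample, and the short cycle it contains does not lift to $G$. Similarly, a subdivision in the contracted graph may use the shortened thread as a branch path of length $r+1$. That path lifts to a branch path of length $r+2$, which is not allowed. Your suggested fixes (``refine the minimality'', ``shorten threads'') do not resolve either problem.

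\textbf{Step 3.} The replacement-thread idea relies on the claim that a side with no short cycle has a connecting path of length at most $r+1$. This is false: large girth gives no bound on distances.

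\textbf{The missing observation.} Your bridge computation works verbatim for every edge-cut $S$ with $|S|\le 2$, with no modification of either side. If $G_1,G_2$ are the two sides, then $|E(G_1)|+|E(G_2)|=|E(G)|-|S|>\frac{r+2}{r+1}|V(G)|-4$. Hence one side, an honest subgraph of $G$, satisfies the density inequality, and anything found in it already lies in $G$. All you need is that this side has at least $r+1$ vertices.
\begin{itemize}
\item \emph{Cuts of $H$.} Take an edge-cut of $H$ of size at most $2$ and lift it to $G$ by choosing one edge from each crossing thread. Both sides contain a cycle because $\delta(H)\ge 3$, so each side has at least $2r+2$ vertices. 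Minimality then applies to the dense side. This is exactly how the paper shows $H$ is $3$-edge-connected.
\item \emph{Long threads.} For a thread with $k\ge r+1$ internal vertices, \emph{delete} its interior (the cut formed by its two end-edges) rather than contracting. The interior path has $k-1\le \frac{r+2}{r+1}k-2$ edges, so the rest of $G$ is dense. The rest has at least $r+1$ vertices: otherwise it is a forest, giving $|E(G)|\le |V(G)|$. That contradicts $|E(G)|>\frac{r+2}{r+1}|V(G)|-2\ge |V(G)|$, which holds because $\delta(G)\ge 2$ and the absence of short cycles force $|V(G)|\ge 2r+2$.
\end{itemize}
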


\begin{proof}
    Let $c:=(r+2)/(r+1)$.
	Throughout we assume that $G$ is a simple graph, since otherwise $G$ contains a 2-cycle.
	Our proof is by induction on $|V(G)|$.  If $|V(G)|=r+1$, then $|E(G)| > c(r+1)-2 = r = |V(G)|-1$; 
	so $G$ contains a $(\le r+1)$-cycle, and we are done. Thus, we assume that $|V(G)|\ge r+2$.  If $G$ contains a vertex $v$ with $d(v)\le 1$,
	then let $G':=G-v$.  Note that $|E(G')| \ge |E(G)|-1 > c(|V(G)|-1)-2 = c|V(G')|-2$, so we are done by
	induction.  Thus, we assume that $\delta(G)\ge 2$.  If $|V(G)|\le 2r+1$, then $G$ contains a $(\le 2r+1)$-cycle,
	and we are done; so we assume that $|V(G)|\ge 2r+2$.  So $|E(G)|> c|V(G)| -2 \ge |V(G)|$.  
	Hence, $G$ contains a cycle, but is more than just a cycle.

	If $G$ is disconnected,
	then by an averaging argument, some component $G'$ satisfies $|E(G')| > c|V(G')|-2$.  Since $\delta(G')\ge 2$, we get that $G'$ contains a cycle, so $|V(G')|\ge 2r+2$ and we are again
	done by the induction hypothesis.  Hence, we instead assume that $G$ is connected.
	Now suppose that $G$ contains an edge-cut $S$ with $|S|\le 2$.  Let $G_1$ be a component of $G-S$,
	and let $G_2:=(G-S)-G_1$.  If $|E(G_i)| \le c|V(G_i)|-2$, for each $i\in [2]$, then summing these
	inequalities gives $|E(G)| \le c|V(G)|-2$,  contradicting \eqref{density-ineq}.  So, we assume, by symmetry, that
	$|E(G_1)| > c|V(G_1)|-2$.  If $|V(G_1)|\ge r+1$, then we are done by applying the induction hypothesis 
	to $G_1$.  So we assume that $|V(G_1)|\le r$.  Since $\delta(G)\ge 2$, this implies that $|S|=2$.
	That is, $G$ is 2-edge-connected.  
    A \emph{thread} is a path whose internal vertices have degree 2 in $G$ and whose endpoints have degree at least 3. 
    Thus, the argument above shows that each thread has length at most $r+1$.

    It remains to show that $H$ is $3$-edge-connected.  Suppose not, and let $S$ be a nonempty proper subset of $V(H)$ with $|\partial_H(S)|\le 2$.  Since $\delta(H)\ge 3$, each of $H[S]$ and $H-S$ contains a cycle.  Lifting these two cycles to $G$ shows that both sides of the corresponding edge-cut of $G$ contain a cycle.  Since $G$ has no cycle of length at most $2r+1$, each side has more than $r$ vertices.  This contradicts the assumption that every edge-cut of size at most $2$ in $G$ has a side of order at most $r$.  Hence $H$ is $3$-edge-connected, and $G$ contains a $(\le r)$-subdivision of a $3$-edge-connected graph.
\end{proof}

\begin{cor}
    \label{sparse-good-subgraph-unified}
    Let $G$ be a multigraph with $|V(G)|\ge 2$. Now
    \begin{enumerate}[label=(\arabic*)]
        \item if $|V(G)|\ge 2$ and $|E(G)|>\frac{3}{2}|V(G)|-2$, then $G$ contains either a cycle of length at most $3$ or a $(\le 1)$-subdivision of a $3$-edge-connected graph;
        \item if $|V(G)|\ge 3$ and $|E(G)|>\frac{4}{3}|V(G)|-2$, then $G$ contains either a cycle of length at most $5$ or a $(\le 2)$-subdivision of a $3$-edge-connected graph.
    \end{enumerate}
\end{cor}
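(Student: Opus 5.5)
This corollary is Lemma~\ref{short-subdivision-density-lem} specialized to $r=1$ and $r=2$, so the plan is to substitute and check the constants and the vertex-count hypotheses.

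For part (1), take $r=1$. Then $\frac{r+2}{r+1}=\frac32$, and the hypothesis $|V(G)|\ge r+1=2$ is exactly the assumption $|V(G)|\ge 2$. So \eqref{density-ineq} becomes $|E(G)|>\frac32|V(G)|-2$, which is assumed. The lemma then gives either a $(\le 2r+1)$-cycle, that is, a cycle of length at most $3$, or a $(\le 1)$-subdivision of a $3$-edge-connected graph. This is the claimed dichotomy.

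For part (2), take $r=2$. Then $\frac{r+2}{r+1}=\frac43$, and the lemma requires $|V(G)|\ge 3$, which is the stated hypothesis. Condition \eqref{density-ineq} becomes $|E(G)|>\frac43|V(G)|-2$. The lemma yields either a cycle of length at most $2r+1=5$ or a $(\le 2)$-subdivision of a $3$-edge-connected graph.

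The only point needing care is multigraphs. The lemma is stated for (multi)graphs, and its proof reduces to simple graphs by noting that a parallel pair is a $2$-cycle. A $2$-cycle already has length at most $3$ (respectively at most $5$), so multigraphs need no separate treatment. There is no real obstacle: the argument is a direct substitution into Lemma~\ref{short-subdivision-density-lem}.
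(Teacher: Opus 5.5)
Your proposal is correct and is exactly the paper's proof, which applies Lemma~\ref{short-subdivision-density-lem} with $r=1$ and $r=2$. Your checks of the density constant $\frac{r+2}{r+1}$, the vertex bound $r+1$, and the cycle length $2r+1$ are right, and the remark about parallel edges is a harmless extra that the lemma already covers.
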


\begin{proof}
Apply Lemma~\ref{short-subdivision-density-lem} with $r=1$ in the first case and $r=2$ in the second case.
\end{proof}

\subsection{Local Flow-choosability}

For each abelian group $A$ and each integer $k\ge 1$, an oriented graph $G$ is \emph{$(A,k)$-flow-choosable} if, for every assignment $F:E(G)\to 2^A$ with $|F(e)|\le k$ for all $e$, there is an $A$-flow $f$ on $G$, with 0 values allowed, such that $f(e)\notin F(e)$ for every edge $e$.

We use two standard lemmas of Jaeger, Linial, Payan, and Tarsi~\cite{jaeger-et-al}.  We say that $G$ is the \emph{$2$-closure} of a spanning subgraph $R$ if there exist cycles $C_1,\ldots,C_t$ such that
\[
    E(G)=E(R)\cup E(C_1)\cup\cdots\cup E(C_t),
\]
where each $C_i$ contains at most two edges not already in $E(R)\cup E(C_1)\cup\cdots\cup E(C_{i-1})$.

\begin{lemA}[Jaeger--Linial--Payan--Tarsi~{\cite[Lemma~2.1]{jaeger-et-al}}]
    \label{two-closure-lem}
    Let $G$ be an oriented graph and let $T$ be a spanning tree of $G$ such that $G$ is the $2$-closure of the spanning subgraph with edge set $E(G)\setminus E(T)$.  Let $A$ be a finite abelian group.  If each edge $e\in E(T)$ has a forbidden set $F(e)\subseteq A$ of size less than $|A|/2$, then $G$ has an $A$-flow $f$ such that
    \[
        f(e)\notin F(e)\qquad\text{for every }e\in E(T).
    \]
\end{lemA}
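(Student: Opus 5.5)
We prove Lemma~\ref{two-closure-lem} by induction on the number $t$ of cycles in the $2$-closure sequence, constructing the flow edge by edge. Let $R$ denote the spanning subgraph with edge set $E(G)\setminus E(T)$. The key observation is that an $A$-flow on $G$ (with 0 allowed) is determined by its values on $E(R)$. Indeed, $E(R)=E(G)\setminus E(T)$ is the complement of a spanning tree, so for every assignment of values on $E(R)$ there is a unique extension to $E(T)$ satisfying conservation: each edge $e\in E(R)$ lies in a fundamental cycle $C_e$ of $T+e$, and the flow is $\sum_{e\in E(R)} f(e)\chi_{C_e}$. So we must choose values on the cotree edges so that each tree edge avoids its forbidden set. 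Rather than using fundamental cycles directly, we build the flow as a sum of cycle-flows along $C_1,\ldots,C_t$ together with free choices on $R$.

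We proceed in reverse order of the closure. The claim we prove by induction is stronger. Set $E_0:=E(R)$ and $E_i:=E_{i-1}\cup E(C_i)$. For each $i$, every edge in $E_i\setminus E_{i-1}$ is a tree edge (since all of $R$ is in $E_0$), and there are at most two such edges. We will assign the flow as $f=\sum_{i=1}^t a_i\chi_{C_i}$ with $a_i\in A$, oriented along a fixed cyclic direction of $C_i$. This is automatically an $A$-flow. The value on an edge $e\in E_i\setminus E_{i-1}$ equals $\pm a_i$ plus contributions from the $a_j$ with $j>i$ whose cycles $C_j$ contain $e$. Hence, if we choose $a_t,a_{t-1},\ldots,a_1$ in \emph{decreasing} order of index, then when we choose $a_i$, the value on each of the at most two new tree edges of $C_i$ has the form $\pm a_i+c_e$ with $c_e$ already fixed. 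Each new edge $e$ forbids fewer than $|A|/2$ values of $a_i$, namely those with $\pm a_i+c_e\in F(e)$. Two edges together forbid fewer than $|A|$ values, so some $a_i$ survives. Later choices $a_{i-1},\ldots$ never touch these edges again, because an edge of $E_i\setminus E_{i-1}$ can lie on $C_j$ only for $j\ge i$.

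It remains to check that every tree edge receives a value this way. Every tree edge lies in $E(G)=E_t$, and it is not in $E_0$, so it is a new edge for exactly one index $i$. That is where its constraint is handled. Edges of $R$ carry no constraint. The main subtlety, and the step to verify carefully, is the ordering claim: a tree edge first appearing in $C_i$ may also lie on cycles $C_j$ with $j>i$, but never on $C_j$ with $j<i$. This is exactly why we choose the coefficients from $t$ down to $1$. The strict inequality $|F(e)|<|A|/2$ is what makes two simultaneous constraints satisfiable. If $C_i$ has no new edges, we set $a_i=0$.
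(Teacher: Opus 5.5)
Your proof is correct and complete. The paper does not prove this lemma itself; it quotes it from Jaeger, Linial, Payan, and Tarsi. So there is no in-paper argument to compare with, and yours serves as a self-contained proof that fits the paper's definition of $2$-closure.

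The key points all check out:
\begin{itemize}
\item Setting $f=\sum_{i=1}^t a_i\chi_{C_i}$ automatically gives an $A$-flow.
\item Each tree edge $e$ lies outside $E(R)$, so it is new for exactly one index $i$ (the least $i$ with $e\in E(C_i)$).
\item That edge lies on no $C_j$ with $j<i$, because $E(C_j)\subseteq E_{i-1}$ for such $j$.
\item Choosing $a_t,a_{t-1},\dots,a_1$ in that order gives $f(e)=\pm a_i+c_e$, where $c_e$ is already fixed and is never changed afterwards.
\item Since $C_i$ is a cycle, it traverses $e$ exactly once, so the coefficient is $\pm1$. Hence $x\mapsto \pm x+c_e$ is a bijection of $A$, and each of the at most two new edges of $C_i$ excludes exactly $|F(e)|<|A|/2$ values of $a_i$. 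At least one value remains.
\end{itemize}

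Two presentational remarks.

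First, nothing in your argument is actually an induction, and the ``stronger claim'' you announce is never stated. Likewise, the opening discussion of fundamental cycles and ``free choices on $R$'' is never used: $f$ is simply $\sum a_i\chi_{C_i}$, and the values on $E(R)$ are whatever they turn out to be. You should delete these sentences and present the proof as a direct sequential (greedy) choice of the coefficients.

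Second, say explicitly that the resulting $f$ may take the value $0_A$ on edges of $R$, and also on tree edges $e$ with $0_A\notin F(e)$. This is the intended reading of the lemma. Its use in Lemma~\ref{subgroup-flow-choosable-lem}, which asks only for a flow with zero values allowed, confirms this.
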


\begin{lemA}[Jaeger--Linial--Payan--Tarsi~{\cite[Lemma~3.3]{jaeger-et-al}}]
    \label{truncated-cubic-tree-lem}
    If $G$ is obtained from a cubic $3$-edge-connected graph by deleting one vertex, then $G$ has a spanning tree $T$ such that $G$ is the $2$-closure of the spanning subgraph with edge-set $E(G)\setminus E(T)$.
\end{lemA}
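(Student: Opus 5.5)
The plan is to reformulate the $2$-closure condition as a contraction process and then choose $T$ from a suitable vertex ordering of the original cubic graph. Let $H$ be the cubic $3$-edge-connected graph and $G=H-v$. Then $G$ has exactly three vertices of degree $2$ (the neighbours $x_1,x_2,x_3$ of $v$), and every other vertex has degree $3$. I expect the inductive verification in the last paragraph to be the main obstacle.

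\textbf{Contraction reformulation.} Suppose $S\subseteq E(G)$ is the current closed set, initially $S=E(G)\setminus E(T)$. Let $G/S$ be the graph obtained by contracting every component of the spanning subgraph $(V(G),S)$. I will check that a cycle of $G$ with at most two edges outside $S$ exists whenever $G/S$ has a loop or a $2$-cycle. Such a loop or digon lifts to a cycle of $G$ by joining its endpoints through paths inside the (connected) components of $S$. So it suffices to show that $G/(E(G)\setminus E(T))$ can be shrunk to a single vertex by repeatedly contracting loops and digons. Since $T$ is a spanning tree, $G/(E(G)\setminus E(T))$ is itself obtained from $T$ by identifying vertices, and the question becomes purely about the order in which tree edges can be absorbed.

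\textbf{Choice of $T$.} A cubic $3$-edge-connected graph is $3$-connected (for cubic graphs vertex- and edge-connectivity coincide), so $H$ is $2$-connected. Hence $H$ has an $st$-numbering with $s=v$ and $t$ any vertex other than $v$. Deleting $v$ gives an ordering $w_1,\dots,w_m$ of $V(G)$ in which every $w_i\ne w_m$ has a later neighbour. Every $w_i$ has an earlier neighbour unless $w_i$ is adjacent to $v$; in particular $w_1\in\{x_1,x_2,x_3\}$. Let $T$ consist of the edge from each $w_i$ ($i<m$) to its \emph{latest} later neighbour; this gives a spanning tree rooted at $w_m$. Every other edge goes into $R:=E(G)\setminus E(T)$. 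By degree counting, each degree-$3$ vertex $w_i$ with $i<m$ has its tree edge plus two further edges. At least one of these goes to an earlier vertex, unless $w_i$ is adjacent to $v$.

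\textbf{Inductive absorption of tree edges.} Process the vertices in increasing order, maintaining the invariant that after step $i$ every edge with both ends in $\{w_1,\dots,w_i\}$, together with the tree edge of each of these vertices, lies in the closure. At step $i$, the vertex $w_i$ is joined to the closed part through an earlier neighbour, and to later vertices by its tree edge and possibly one $R$-edge $e$. The $R$-edge $e$ goes to a later vertex $u$ that precedes the tree-neighbour of $w_i$, and by the $st$-property $u$ is in turn joined onward to later vertices.

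The goal is a cycle through the tree edge of $w_i$ that uses at most one further not-yet-closed edge. I would try first to close it up through $e$ and the path $u\to\cdots$ formed by tree edges, which all point forward and eventually hit the tree-neighbour's branch. I do not yet see how to bound the number of unabsorbed tree edges on that path. If this bookkeeping fails, the fallback is to follow Jaeger--Linial--Payan--Tarsi more closely. That means choosing the ordering by repeatedly removing a vertex whose deletion keeps the remainder connected, and using $3$-edge-connectivity only to guarantee at each step a vertex with two edges into the already-contracted blob, so that $G/S$ always contains a digon.
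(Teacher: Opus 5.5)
\textbf{There is a genuine gap here, and it is worse than unfinished bookkeeping.} Your contraction reformulation is fine: a loop or digon of $G/S$ lifts to a cycle with at most two edges outside $S$, and conversely. But the step you flagged as the main obstacle is false for the tree you propose. Take $H=Q_3$, written as two $4$-cycles $a_1a_2a_3a_4$ and $b_1b_2b_3b_4$ with spokes $a_ib_i$. Let $v=a_1$ and use the $st$-numbering $a_1,a_2,a_3,a_4,b_4,b_1,b_2,b_3$; every inner vertex has an earlier and a later neighbour.
\begin{itemize}
\item Your latest-later-neighbour rule gives $E(T)=\{a_2b_2,\,a_3b_3,\,a_4b_4,\,b_4b_3,\,b_1b_2,\,b_2b_3\}$ and $R=\{a_2a_3,\,a_3a_4,\,b_1b_4\}$.
\item The components of $R$ are $\{a_2,a_3,a_4\}$, $\{b_1,b_4\}$, $\{b_2\}$ and $\{b_3\}$.
\item The six tree edges join these four classes pairwise, so $G/R\cong K_4$.
\end{itemize}
This $K_4$ has no loop and no digon. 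So no cycle of $G$ has at most two edges outside $R$, and the closure cannot even begin.

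The reason is that the situation is completely rigid. Let $n=|V(H)|$. Then $|E(T)|=n-2$ and $|R|=n/2-1$, which is exactly the cycle rank of $G$. After discarding cycles with no new edge, the cycles of a closure sequence are independent in the cycle space, so there are exactly $n/2-1$ of them. Consequently:
\begin{itemize}
\item each cycle contains \emph{exactly} two tree edges;
\item $R$ must be a forest with $n/2$ components;
\item every step must merge two distinct current components, so a loop appearing in $G/S$ at any stage is fatal.
\end{itemize}
Your target, ``a cycle through the tree edge of $w_i$ with at most one further unclosed edge'', must therefore have exactly one such edge, and it must end in a different current component. Nothing in the $st$-numbering or in the tree rule controls this.

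The fallback does not fix this either. It does not say what $T$ is. A vertex with two edges into the blob yields a usable digon only under three conditions:
\begin{itemize}
\item both edges are unclosed tree edges;
\item its whole $R$-component sends no third unclosed edge to the blob, since that edge would become a loop;
\item all these choices together still form a spanning tree.
\end{itemize}
Those conditions are exactly what has to be designed and proved.

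The paper gives no proof of this lemma; it imports it from Jaeger--Linial--Payan--Tarsi. To stand on its own, your argument needs a more careful construction of $T$ together with the absorption order, plus a proof that a digon of tree edges always exists. An inductive construction is one natural route. For example, the property glues along a nontrivial $3$-edge-cut $\partial_H(X)$ with $v\notin X$: first close $H[X]=(H/\overline{X})-z$, where $z$ is the contracted vertex, then lift the sequence for $(H/X)-v$ through the now-closed blob $X$. This reduces the problem to cyclically $4$-edge-connected $H$, but that case still needs its own argument.
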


\begin{lem}[Subgroup Flow-choosability]
    \label{subgroup-flow-choosable-lem}
    Fix an integer $k\ge 1$, and let $A$ be a finite abelian group.  Suppose that $A$ contains a subgroup $H$ such that
    \[
        |H|\ge k+1
        \qquad\text{and}\qquad
        {|A|/|H|}\ge 2k+1.
    \]
    Now for every $3$-edge-connected oriented graph $G$ and every vertex $v$ of degree $3$ in $G$, the graph $G-v$ is $(A,k)$-flow-choosable.  In particular, every $3$-edge-connected graph is $(A,k)$-flow-choosable.
\end{lem}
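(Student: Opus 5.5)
The idea is to reduce to the Jaeger--Linial--Payan--Tarsi setup in two stages: first flow-choosability over the quotient $A/H$ using Lemmas~\ref{truncated-cubic-tree-lem} and~\ref{two-closure-lem}, and then a correction inside the subgroup $H$ that dodges the forbidden values. Let $G$ be $3$-edge-connected with a degree-$3$ vertex $v$, and let $F:E(G-v)\to 2^A$ satisfy $|F(e)|\le k$.

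\textbf{Reduction to a cubic graph.} I would first reduce to the case where $G$ is cubic. Expand each vertex of degree greater than $3$ into a cubic tree, or into a small cycle structure, so that $3$-edge-connectivity is preserved and $v$ is untouched. This is the standard splitting reduction. A flow on the expanded graph restricts to a flow on $G-v$ on the original edges. The new edges get empty forbidden sets, so choosability of the expanded graph minus $v$ implies choosability of $G-v$. By Lemma~\ref{truncated-cubic-tree-lem}, the expanded graph minus $v$ then has a spanning tree $T$ such that the graph is the $2$-closure of $E\setminus E(T)$.

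\textbf{Two-stage choice.} Let $\pi:A\to A/H$ be the quotient map.

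\emph{Stage one.} For each tree edge $e\in E(T)$, set $F_1(e):=\pi(F(e))$. This has size at most $k<(2k+1)/2\le |A/H|/2$. Lemma~\ref{two-closure-lem} in the group $A/H$ then gives an $A/H$-flow $\bar f$ with $\bar f(e)\notin \pi(F(e))$ for all $e\in E(T)$. Lift $\bar f$ to an $A$-valued flow $f_0$; this is possible because flows are determined by values on cotree edges, so we may lift the cotree values arbitrarily and let the tree values follow. The lift of the tree values is automatically in the correct cosets. Hence on tree edges $f_0(e)+h\notin F(e)$ for every $h\in H$, since the whole coset avoids $F(e)$.

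\emph{Stage two.} Fix the cotree edges. For each cotree edge $e$, the forbidden values of $f_0(e)$ within its coset $f_0(e)+H$ number at most $k<|H|$. So there is an $h_e\in H$ with $f_0(e)+h_e\notin F(e)$. Adding the fundamental-cycle flow with value $h_e$ for each cotree edge $e$ changes every tree edge only by an element of $H$, which stage one made harmless. It sets cotree edge $e$ to $f_0(e)+h_e$, since each fundamental cycle contains exactly one cotree edge. The resulting $A$-flow avoids $F$ on every edge.

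\textbf{Main obstacle.} The delicate step is the reduction to cubic graphs while keeping $v$ and preserving $3$-edge-connectivity. I would handle it by repeatedly splitting a high-degree vertex into two adjacent vertices, choosing a splitting that keeps $3$-edge-connectivity; such a choice exists by a Mader/Fleischner-type splitting argument for $3$-edge-connected graphs with the new edge added. If that proves awkward, I would instead replace each high-degree vertex by a cycle $C_d$ with one attachment per cycle vertex. This is cubic and plainly preserves $3$-edge-connectivity, since any small cut would have to separate the cycle, which costs at least $2$ edges plus attachments. The \emph{in particular} statement follows by applying the result to $G$ with a pendant structure: take any $3$-edge-connected $G$, add a degree-$3$ vertex joined to three vertices (keeping $3$-edge-connectivity), and delete it.
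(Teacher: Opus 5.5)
Your argument is essentially the paper's proof. The only difference is in stage one. The paper applies Lemma~\ref{two-closure-lem} directly in $A$ with the enlarged forbidden sets $F(e)+H$, of size at most $k|H|<|A|/2$. You instead apply it in $A/H$ to $\pi(F(e))$ and then lift along the cotree. Since $f_0(e)\notin F(e)+H$ is the same condition as $\pi(f_0(e))\notin\pi(F(e))$, the two versions are equivalent, and your lifting step is correctly justified. Stage two is identical to the paper's.

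One claim in your reduction to cubic graphs is false as justified. Replacing a vertex $w$ by a cycle $C_d$ does not automatically preserve $3$-edge-connectivity, because a cut can cross the cycle in exactly two edges and use no attachment edge.

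\emph{Example.} Glue two copies of $K_4$ at a vertex $w$. The result is $3$-edge-connected with $d(w)=6$. But if the three edges into one copy are attached to consecutive vertices of $C_6$, the two cycle edges leaving that arc form a $2$-edge-cut.

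\emph{When it fails and how to fix it.} Such a $2$-edge-cut arises exactly when $G-w$ is disconnected and some arc of $C_d$ carries precisely the attachments of a union of components of $G-w$. If $G-w$ is connected, any cyclic order works. Otherwise, ordering the attachments round-robin over the components prevents this, since each component sends at least $3$ edges to $w$. The paper performs this cycle replacement without specifying the order, so the same caveat applies there.

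\emph{Your primary route works.} Fleischner's lemma only preserves bridgelessness, so the tool you need is Mader's splitting theorem. It applies since $d(w)\ge 4$ and there are no bridges, and it gives a pair $wu,wv$ that can be split off while preserving all local edge-connectivities among $V(G)-w$. Subdivide the new edge $uv$ by a vertex $w_1$ and add the edge $w_1w$. The result is $3$-edge-connected, $w$ now has degree $d(w)-1$, and $w_1$ has degree $3$.
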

\begin{proof}
    The second statement follows from the first by adding a new vertex of degree $3$ to the given graph, allowing parallel edges if necessary, and then deleting it. To prove the first statement, we replace each vertex $w$ of degree at least $4$ by a cycle of length $d_G(w)$, connecting each edge incident to $w$ to a distinct vertex of the cycle. This reduction allows us to assume that $G$ is cubic. 

    Let $F(e)\subseteq A$ be forbidden sets with $|F(e)|\le k$.  By Lemma~\ref{truncated-cubic-tree-lem}, the graph $G-v$ has a spanning tree $T$ such that $G-v$ is the $2$-closure of the spanning subgraph with edge set $E(G-v)\setminus E(T)$.  For each edge $e\in E(T)$, let
    $
        F^*(e):=F(e)+H=\{z+h:z\in F(e),\ h\in H\}.
    $
    Since $|F(e)|\le k$ and $[A:H]\ge 2k+1$, we have
    $
        |F^*(e)|\le k|H|<\frac{|A|}{2}.$
    So Lemma~\ref{two-closure-lem} gives an $A$-flow $f$ such that
    $
        f(e)\notin F(e)+H
    $ {for every }$e\in E(T)$.
    
    Now fix $e\in E(G-v)\setminus E(T)$.  Since $|H|\ge k+1$, we may choose $x_e\in H$ such that
    \[
        f(e)+x_e\notin F(e).
    \]
    Add, for each non-tree edge $e$, the value $x_e$ on the fundamental cycle of $e$ with respect to $T$.  Let $g$ be the sum of these added cycle-flows.  Then $g(e)=x_e$ on each non-tree edge $e$, and $g(e')\in H$ on every tree edge $e'$.  Thus $f+g$ avoids $F(e)$ on every non-tree edge by the choice of $x_e$, and avoids $F(e')$ on every tree edge $e'$ because $f(e')\notin F(e')+H$ and $g(e')\in H$.  Hence $G-v$ is $(A,k)$-flow-choosable.
\end{proof}

We also use the following theorem of Langhede and Thomassen.

\begin{thmA}[Langhede--Thomassen~{\cite[Corollary 16]{LT}}]
    \label{LT-lem}
    Fix an integer $k\ge 1$, and let $A$ be an abelian group with $|A|>8k^3$.  If $G$ is $3$-edge-connected, then $G$ is $(A,k)$-flow-choosable.
\end{thmA}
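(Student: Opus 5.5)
This theorem is quoted from Langhede and Thomassen, so the plan below shows how one could prove it with the tools of this section. It is not a reconstruction of their proof, and its central step is unverified.

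\textbf{Setup.} As in the proof of Lemma~\ref{subgroup-flow-choosable-lem}, I would first reduce to the case that $G$ is cubic. To do this, replace each vertex of degree at least $4$ by a cycle. The result is still $3$-edge-connected, and a flow avoiding the forbidden sets on the new graph restricts to one on $G$. Then split according to the structure of $A$.

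\textbf{Case 1: a suitable subgroup exists.} Suppose $A$ has a subgroup $H$ with $|H|\ge k+1$ and $|A|/|H|\ge 2k+1$. Then Lemma~\ref{subgroup-flow-choosable-lem} applies directly and we are done.

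\textbf{Case 2: no suitable subgroup.} Then no divisor $d$ of $|A|$ lies in the interval
\[
I:=[\,k+1,\ |A|/(2k+1)\,].
\]
This uses the fact that a finite abelian group has a subgroup of every order dividing $|A|$. The interval $I$ is long, since $|A|>8k^3$ gives
\[
\frac{|A|/(2k+1)}{k+1}>\frac{8k^3}{(2k+1)(k+1)},
\]
which is roughly $4k$. Now write $|A|=q_1q_2\cdots q_t$ with $q_1\le\cdots\le q_t$ prime, and look at the chain of partial products $1,q_1,q_1q_2,\ldots,|A|$. This chain must jump over all of $I$ in a single step. Hence one prime $q$ dividing $|A|$ is larger than roughly $4k$. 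Moreover, the product of the remaining primes is at most $k$ on the side below $I$, with a complementary bound above it. So $A\cong \Z_q\times B$ (or $A$ has a cyclic factor of order $q^a$) with $q>2k$ and $|B|\le k$ small. In particular $A$ is dominated by a cyclic factor of large prime order.

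\textbf{Handling Case 2.} Here I would again use the spanning tree $T$ of $G-v$ from Lemma~\ref{truncated-cubic-tree-lem}. On tree edges, Lemma~\ref{two-closure-lem} already tolerates forbidden sets of size less than $|A|/2$, which is generous. The difficulty is the non-tree edges, where there is no subgroup slack to exploit. I would first try a polynomial (Combinatorial Nullstellensatz / Alon--Tarsi style) argument over $\mathbb F_q$. Parametrize the flows by their values on the non-tree edges; the value on each tree edge is then a linear form in these parameters. The product over all edges of $\prod_{z\in F(e)}(\text{value}(e)-z)$ is a polynomial of degree at most $k|E|$. I would try to show that some monomial has each exponent below $q$ with nonzero coefficient, using the $2$-closure structure to order the cycles $C_1,\ldots,C_t$ so that each step adds at most two new variables. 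The factor $B$ would then be absorbed by treating forbidden sets coset-wise, replacing $F(e)$ by its projection to $\Z_q$, of size at most $k$.

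\textbf{Main obstacle.} I expect Case 2 to be the crux. Making the Nullstellensatz degree count work is where the cubic bound $8k^3$ should enter, as a sufficient size for $q$ relative to $k$ in the greedy extension along the $2$-closure. If the polynomial route fails, the fallback is a direct greedy extension along $C_1,\ldots,C_t$. Each cycle step has two free new edges, whose values are chosen to avoid at most $O(k)$ values each, while earlier tree-edge constraints are tracked through Lemma~\ref{two-closure-lem}.
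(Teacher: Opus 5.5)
Your proposal has a genuine gap: Case~2 is never proved. For context, the paper does not prove this statement either. It imports it from Langhede--Thomassen, and it needs the citation precisely for groups that Lemma~\ref{subgroup-flow-choosable-lem} cannot handle, such as $\Z_q$ with $q\ge 67$ prime when $k=2$. Your Case~1 is exactly that lemma. Your divisor analysis of Case~2 is correct, and it can be sharpened. The large prime $q$ divides $|A|$ exactly once, since otherwise $q$ itself would lie in your interval. So $A\cong\Z_q\times B$ with $|B|\le k$ and $q\ge |A|/k>8k^2$. Case~2 therefore carries all of the difficulty. You offer two routes for it; neither is carried out, and neither works as described.

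\textbf{The Nullstellensatz route.} The degree count is the easy part: roughly $k|E|$ spread over $|E|-|V|+1$ variables, so the average exponent is about $3k$ in a cubic graph, far below $q$. What you actually need is that some monomial of $\prod_e \ell_e(x)^{|F(e)|}$, with all exponents below $q$, has a nonzero coefficient modulo $q$. That is a permanent/Alon--Tarsi-type nonvanishing statement about the cycle space of $G$. Ordering the cycles of a $2$-closure does not supply it.

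\textbf{The greedy fallback.} Write $f=\sum_i x_i\chi_{C_i}$ and choose $x_t,\ldots,x_1$ in turn, as in the proof of Lemma~\ref{two-closure-lem}. Each tree edge is settled at the step of the cycle in which it is new, and there are at most two such edges per step. That is why forbidden sets of size less than $|A|/2$ suffice for tree edges. A cotree edge $e$, however, has $f(e)=\sum_{j:\,e\in C_j}\pm x_j$, which is settled only at step $\min\{j: e\in C_j\}$. For instance, every cotree edge of $C_1$ is settled when $x_1$ is chosen. Nothing in Lemma~\ref{truncated-cubic-tree-lem} bounds $|E(C_1)\setminus E(T)|$, so $x_1$ may face $k\,|E(C_1)\setminus E(T)|$ forbidden values, and no bound on $q$ in terms of $k$ alone controls this. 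This accumulation on cotree edges is exactly what the subgroup $H$ absorbs in Lemma~\ref{subgroup-flow-choosable-lem}. For $\Z_q$ your plan has no substitute, so as written it proves the theorem only when $A$ has a subgroup of order between $k+1$ and $|A|/(2k+1)$.

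\textbf{A smaller point.} Replacing a vertex by a cycle with an arbitrary cyclic order of its edges can destroy $3$-edge-connectivity. For example, suppose $d(v)=6$ and $G-v$ has two components, each joined to $v$ by three edges. Placing each triple consecutively on the new $6$-cycle creates a $2$-edge-cut. The order must be chosen with care, or a splitting lemma used; the proof of Lemma~\ref{subgroup-flow-choosable-lem} passes over this point too.
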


\begin{lem}[Short Subdivisions]
    \label{local-subdivision-choosability-lem}
    Let $A$ be a finite abelian group, and let $r,\ell,k$ be positive integers.  Suppose that
    \begin{enumerate}[label=\textup{(\roman*)}]
        \item $|A|>k\ell$, and
        \item every $3$-edge-connected oriented graph is $(A,k(r+1))$-flow-choosable.
    \end{enumerate}
    Then every orientation of every cycle of length at most $\ell$, and every orientation of every $(\le r)$-subdivision of a $3$-edge-connected graph, is $(A,k)$-flow-choosable.
\end{lem}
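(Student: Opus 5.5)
We treat the two kinds of graphs separately; neither needs anything beyond counting and hypothesis (ii).

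\emph{Cycles.} Let $C$ be an oriented cycle of length $m\le \ell$ with forbidden sets $F(e)$, $|F(e)|\le k$. Every $A$-flow on $C$ (zero allowed) is determined by one parameter $a\in A$: after converting to a cyclic orientation, $f(e)=\pm a$, with the sign fixed by whether $e$ agrees with the cyclic direction. The condition $f(e)\notin F(e)$ excludes at most $|F(e)|\le k$ values of $a$ for each edge. Hence at most $km\le k\ell<|A|$ values are excluded in total, and some $a$ survives by hypothesis (i). This case is immediate.

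\emph{Short subdivisions.} Let $G'$ be a $(\le r)$-subdivision of a $3$-edge-connected graph $H$, with forbidden sets $F$ of size at most $k$. We may take $H$ with the orientation inherited from any fixed edge of each thread. Each edge $uv$ of $H$ corresponds to a thread $P_{uv}$ in $G'$ with at most $r+1$ edges. Any flow on $G'$ is constant along each thread up to the sign determined by each edge's orientation relative to the thread direction, so it corresponds bijectively to a flow $g$ on $H$ with $f(e)=\epsilon_e\, g(uv)$ for $e\in E(P_{uv})$, where $\epsilon_e=\pm1$. For each edge $uv$ of $H$ we define
\[F_H(uv):=\bigcup_{e\in E(P_{uv})}\epsilon_e F(e).\]
Then $|F_H(uv)|\le k(r+1)$. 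By hypothesis (ii) there is a flow $g$ on $H$ with $g(uv)\notin F_H(uv)$ for all $uv$. Lifting $g$ to $G'$ gives the required flow avoiding every $F(e)$.

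The only point requiring care is that the result is needed for every orientation of the subdivision. This is handled by the signs $\epsilon_e$: multiplying by $-1$ is a bijection on $A$, so it preserves the sizes of the forbidden sets. We must also check that $H$ may have parallel edges, and that hypothesis (ii) applies to multigraphs with arbitrary orientation, as its statement allows. I expect no real obstacle beyond this bookkeeping.
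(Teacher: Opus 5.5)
Your proposal is correct and follows the paper's proof essentially step for step. For the cycle, you count at most $k\ell<|A|$ excluded values of the single flow parameter. For the subdivision, you pull the forbidden sets back, with signs, to a set of size at most $k(r+1)$ on each edge of the underlying $3$-edge-connected graph, apply hypothesis (ii), and lift the resulting flow along the threads.
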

\begin{proof}
    Let $H$ be the oriented graph under consideration, and let $F(e)\subseteq A$ be forbidden sets with $|F(e)|\le k$ for each edge $e$.

    First suppose that $H$ is a cycle of length at most $\ell$.  Fix a cyclic orientation of $H$.  A cycle-flow on $H$ is determined by a single flow value $x\in A$.  Each edge forbids at most $k$ choices of $x$, after possibly replacing each forbidden element by its inverse $0_A-x$ according to the orientation of the edge.  Hence at most $k\ell$ elements of $A$ are forbidden.  Since $|A|>k\ell$, some choice of $x$ avoids all forbidden values.

    Now suppose that $H$ is a $(\le r)$-subdivision of a $3$-edge-connected graph $J$.  Fix an arbitrary orientation of $J$.  For each edge $e\in E(J)$, let $P_e$ be the corresponding path of $H$.  A value assigned to $e$ induces, up to sign, the same value on every edge of $P_e$.  Thus the forbidden sets on the edges of $P_e$ induce a forbidden set $F_J(e)\subseteq A$ of size at most $k|E(P_e)|\le k(r+1)$, again after changing signs where orientations disagree.  By assumption, $J$ has an $A$-flow $x$ such that $x(e)\notin F_J(e)$ for every $e\in E(J)$.  Lifting $x$ to the subdivided paths of $H$ gives an $A$-flow on $H$ avoiding all sets $F(e)$.
\end{proof}

\begin{definition}
    \label{admissible-group-def}
    A finite abelian group $A$ is \emph{admissible} if the following 2 conditions hold.
    \begin{enumerate}[label=\textup{(\roman*)}]
        \item Every orientation of every $(\le 3)$-cycle is $(A,1)$-flow-choosable; and
        \item Every orientation of every $(\le 1)$-subdivision of a $3$-edge-connected graph, is $(A,1)$-flow-choosable.
    \end{enumerate}
\end{definition}
Note that (i) implies that every orientation of every 3-edge-connected graph is $(A,2)$-flow-choosable.
To see this, we simply subdivide each edge $e$ with $|F(e)|=2$ and let each of the resulting new edges inherit a distinct value in $F(e)$; afterwards, we apply (i).

\begin{cor}
    \label{admissible-cor}
    Let $A$ be a finite abelian group.  If either
    \begin{enumerate}[label=\textup{(\roman*)}]
        \item $|A|\ge 65$, or
        \item $A$ has a subgroup $H$ with $|H|\ge 3$ and $[A:H]\ge 5$,
    \end{enumerate}
    then $A$ is admissible.
\end{cor}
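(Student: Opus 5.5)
Admissibility asks for two properties: condition (i) on cycles of length at most $3$, and condition (ii) on $(\le 1)$-subdivisions of $3$-edge-connected graphs, each with $k=1$. In both cases of the corollary I would reduce everything to Lemma~\ref{local-subdivision-choosability-lem} with $k=1$, $r=1$, and $\ell=3$. That lemma returns exactly conditions (i) and (ii) of Definition~\ref{admissible-group-def}, provided two hypotheses hold:

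1. $|A|>k\ell=3$;
2. every $3$-edge-connected oriented graph is $(A,k(r+1))=(A,2)$-flow-choosable.

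So in each case it suffices to check $|A|\ge 4$ and $(A,2)$-flow-choosability of all $3$-edge-connected graphs.

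\textbf{Case (i), $|A|\ge 65$.} Hypothesis 1 is immediate. For hypothesis 2, apply the Langhede--Thomassen theorem (Theorem~\ref{LT-lem}) with $k=2$. Its threshold is $|A|>8k^3=64$, which is precisely $|A|\ge 65$. This explains the constant in the statement.

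\textbf{Case (ii), a subgroup $H$ with $|H|\ge 3$ and $[A:H]\ge 5$.} Apply Lemma~\ref{subgroup-flow-choosable-lem} with $k=2$. Its hypotheses are $|H|\ge k+1=3$ and $|A|/|H|\ge 2k+1=5$, both given. The ``in particular'' clause of that lemma then says every $3$-edge-connected graph is $(A,2)$-flow-choosable. Hypothesis 1 holds because $|A|=|H|\,[A:H]\ge 15>3$.

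With both hypotheses verified in both cases, Lemma~\ref{local-subdivision-choosability-lem} gives (i) and (ii) of Definition~\ref{admissible-group-def}, so $A$ is admissible.

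The only point needing care is matching parameters: $r=1$ forces $k(r+1)=2$, and this is exactly what both Theorem~\ref{LT-lem} and Lemma~\ref{subgroup-flow-choosable-lem} are tuned to. There is no genuine obstacle beyond confirming that the orientation conventions in the cited results agree. Lemma~\ref{local-subdivision-choosability-lem} is stated for every orientation, and flow-choosability is insensitive to reorientation, since negating $F(e)$ compensates. So this matching is routine.
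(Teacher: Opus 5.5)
Your proposal is correct and matches the paper's proof: in each case you obtain $(A,2)$-flow-choosability of all $3$-edge-connected graphs, via the Langhede--Thomassen theorem or via the subgroup flow-choosability lemma with $k=2$. You then apply the short-subdivision lemma with $r=1$, $\ell=3$, $k=1$, after checking that $|A|>3$.
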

\begin{proof}
    If $|A|\ge 65$, then every $3$-edge-connected graph is $(A,2)$-flow-choosable by Theorem~\ref{LT-lem} with $k=2$.  If $A$ has a subgroup $H$ with $|H|\ge 3$ and $[A:H]\ge 5$, then every $3$-edge-connected graph is $(A,2)$-flow-choosable by Lemma~\ref{subgroup-flow-choosable-lem} with $k=2$.  In either case $|A|>3$, so $A$ satisfies Definition~\ref{admissible-group-def}, by Lemma~\ref{local-subdivision-choosability-lem} with $r=1$, $\ell=3$, and $k=1$.
\end{proof}

\begin{cor}
    \label{large-group-admissible-cor}
    Every finite abelian group of order at least $63$ is admissible.
\end{cor}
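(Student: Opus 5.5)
The plan is to reduce the claim to Corollary~\ref{admissible-cor}, which already gives admissibility in two cases: either $|A|\ge 65$, or $A$ has a subgroup $H$ with $|H|\ge 3$ and $[A:H]\ge 5$. Every group of order at least $65$ is covered by part (i). So only the two orders $|A|\in\{63,64\}$ remain, and for each of these I will exhibit a suitable subgroup $H$ and then apply part (ii).

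\textbf{Order $63=3^2\cdot 7$.} Since $3$ divides $|A|$, Cauchy's theorem (or the fundamental theorem of finite abelian groups) gives an element of order $3$. Let $H$ be the cyclic subgroup it generates. Then $|H|=3\ge 3$ and $[A:H]=21\ge 5$, so Corollary~\ref{admissible-cor}(ii) applies. This holds for both abelian groups of order $63$, namely $\mathbb{Z}_{63}$ and $\mathbb{Z}_3\times\mathbb{Z}_{21}$.

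\textbf{Order $64=2^6$.} A finite abelian $p$-group has a subgroup of every order dividing the group order. One can see this directly by writing $A\cong\prod \mathbb{Z}_{2^{n_i}}$ and taking products of subgroups of the cyclic factors. So we may choose $H\le A$ with $|H|=4$. Then $|H|\ge 3$ and $[A:H]=16\ge 5$, and again Corollary~\ref{admissible-cor}(ii) gives that $A$ is admissible.

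There is no real obstacle here. The only point that needs care is justifying that the required subgroup exists in every abelian group of each order, not just the cyclic one. Cauchy's theorem handles order $63$, and the subgroup structure of abelian $p$-groups handles order $64$.
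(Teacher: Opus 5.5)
Your proof is correct and follows the paper's argument: Corollary~\ref{admissible-cor}(i) handles $|A|\ge 65$, and for orders $63$ and $64$ you exhibit a subgroup satisfying part (ii). The only difference is the choice of subgroup. The paper uses a subgroup of order $7$ and index $9$ when $|A|=63$, and one of order $8$ and index $8$ when $|A|=64$; your subgroups of order $3$ and order $4$ satisfy $|H|\ge 3$ and $[A:H]\ge 5$ just as well.
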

\begin{proof}
    If $|A|\ge 65$, then this holds by  Corollary~\ref{admissible-cor}.  If $|A|=63$, then $A$ has a subgroup of order $7$, of index $9$.  If $|A|=64$, then $A$ has a subgroup of order $8$, of index $8$.  In both exceptional cases, we are done by Corollary~\ref{admissible-cor}.
\end{proof}

\subsection{Flow reconfiguration via complementary subgroups}

The next lemma uses a decomposition $A=A_1\oplus A_2$ into complementary subgroups. Recall that $A=A_1\oplus A_2$ means that every element of $A$ can be written uniquely as $a_1+a_2$ with $a_i\in A_i$; equivalently, $A=A_1+A_2$ and $A_1\cap A_2=\{0_A\}$. Adding an $A_i$-flow preserves every coset of $A_i$ for $i\in\{1,2\}$, and the proof repeatedly exploits this property. This is equivalent to $A\cong A_1\times A_2$, but we will work throughout with the internal direct sum notation $A_1\oplus A_2$ for clarity.

\begin{lem}[Complementary subgroup method]
    \label{2group-flow-thm}
    Let $G$ be a $4$-edge-connected graph, and let $A$ be a finite abelian group,  where $A=A_1\oplus A_2$.
    If $A_1$ and $A_2$ are admissible subgroups with $|A_1|,|A_2|\ge 3$, then $G$ is $A$-flow-connected.
\end{lem}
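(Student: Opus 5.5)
The plan is to reduce everything to the \emph{coordinate} flows. Write each $A$-flow as $f=f_1+f_2$, where $f_i$ is an $A_i$-flow (with $0$ allowed) obtained by projecting along $A=A_1\oplus A_2$. Then $f$ is nowhere-zero exactly when no edge has $f_1(e)=f_2(e)=0$. Call $f$ \emph{$i$-good} if $f_i$ is itself nowhere-zero in $A_i$.

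\textbf{Step 1: connecting good flows.} Suppose $f$ is $2$-good. Adding any $A_1$-valued cycle-flow to $f$ keeps it nowhere-zero, because the $A_2$-coordinate on every edge is untouched. So by Lemma~\ref{decompose-flow-lem}, $f$ is equivalent to $h_1+f_2$ for any $A_1$-flow $h_1$. Fix a nowhere-zero $A_1$-flow $h_1$. It exists because $G$ is $3$-edge-connected and admissibility gives $(A_1,1)$-flow-choosability (forbid $0$ on every edge); this is where $|A_1|\ge 3$ and admissibility enter. Symmetrically, $h_1+f_2$ is $1$-good, so its $A_2$-coordinate can be replaced by any $A_2$-flow. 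Hence all $1$-good and $2$-good flows lie in a single component of $\F(G,A)$.

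\textbf{Step 2: reaching a good flow.} It remains to show that every $A$-flow $f$ is equivalent to a $2$-good flow. Let $Z:=\{e: f_2(e)=0\}$, so $f_1\neq 0$ on $Z$, and let $N:=\{e: f_1(e)=0\}$. I would argue by induction on $|Z|$, finding a move that strictly shrinks $Z$. The allowed moves are additions of $A_2$-valued cycle-flows, possibly interleaved with $A_1$-valued moves that enlarge the set of edges where $f_1\ne 0$. Such a move is legal as long as, at every intermediate stage, no edge of $N$ has its $A_2$-coordinate sent to $0$.

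To find the move, I would use Nash-Williams--Tutte: $G$ has two edge-disjoint spanning trees, so every subgraph relevant here satisfies $|E|\ge 2|V|-2>\frac32|V|-2$. Corollary~\ref{sparse-good-subgraph-unified}(1) then provides a subgraph $H$ that is either a $(\le3)$-cycle or a $(\le1)$-subdivision of a $3$-edge-connected graph. I want $H$ to meet $Z$, obtained for instance by applying the corollary to a suitable dense subgraph around $Z$. By admissibility, $H$ is $(A_2,1)$-flow-choosable, so there is an $A_2$-flow $g$ on $H$ with $f_2+g\neq0$ on every edge of $H$. Each edge thus forbids exactly one value: $-f_2(e)$.

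\textbf{Main obstacle.} The difficulty is realizing $g$ by single-cycle steps whose partial sums never create a zero on edges of $N\cap E(H)$. For $H$ a short cycle this is automatic, since $g$ is a single cycle-flow. For a subdivision $H$, I would first use $A_1$-moves, which are harmless on edges where $f_2\ne0$, to make $f_1$ nonzero on $E(H)$; this is possible by $(A_1,1)$-choosability of $H$ with forbidden values $-f_1(e)$ on the edges of $H\setminus Z$. After that, every intermediate $A_2$-step is safe. Making this two-phase local argument consistent is where I expect the real work to be. In particular, the $A_1$-adjustment is a flow on all of $H$, so its own realization must also avoid zeros on edges where $f_2=0$, and I would also have to check that these steps do not increase $|Z|$ elsewhere.
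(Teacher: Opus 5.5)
Your Step~1 is correct and coincides with the last paragraph of the paper's proof: once one coordinate is nowhere-zero, the other can be changed freely, one cycle-flow at a time. The gap is in Step~2, and it goes beyond the bookkeeping you flag at the end. Finding some $H$ that meets $Z$ is trivial, since $G$ itself is $3$-edge-connected. The real question is whether the correcting $A_2$-flow $g$ can be realized by single-cycle moves. $(A_2,1)$-choosability controls the final values $f_2+g$, but not the partial sums of a cycle decomposition of $g$, and those partial sums may vanish on $E(H)\cap N$. So you need $E(H)\cap N=\emptyset$.

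An $H$ of the required type inside $E(G)\setminus N$ that also meets $Z$ need not exist. For example, if $f_1$ is supported on a single cycle of length at least $4$, then $E(G)\setminus N$ contains no such $H$ at all. Your density remark does not help: two edge-disjoint spanning trees give $|E|\ge 2|V|-2$ for $G$ and its contractions, not for subgraphs such as $G-N$.

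Your repair is circular. To make $f_1\ne0$ on $E(H)$ you need an $A_1$-flow whose partial sums avoid $0$ on $E(H)\cap Z$, where $f_2=0$. That is exactly the set you are trying to eliminate. (Also, its forbidden value $-f_1(e)$ would have to be imposed on all of $E(H)$, not only off $Z$.) So when $H$ meets both $Z$ and $N$, neither phase can be started safely, and the induction on $|Z|$ has no guaranteed step.

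The missing idea is that $H$ never needs to meet $Z$. The paper proceeds in three stages.

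\emph{Shrink $Z\cup N$.} While $|Z\cup N|\ge n$, this set contains a cycle. Add $a_1+a_2$ along it, with $0\ne a_i\in A_i$ and the relevant $a_i$ avoiding $-f(e)$ at one chosen edge $e$. This removes $e$ from $Z\cup N$ and adds no edge; it is where $|A_i|\ge 3$ is used.

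\emph{Find $H$ avoiding $N$.} Now one of $Z,N$, say $N$, has fewer than $n/2$ edges. Then $|E(G)\setminus N|>3n/2$, and Corollary~\ref{sparse-good-subgraph-unified}(1) gives $H$ with $f_1\ne0$ on all of $E(H)$.

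\emph{Contract and lift.} The induction is on $|V(G)|$, not $|Z|$. Contract $H$; the result is still $4$-edge-connected. Connect $f/H$ to the target flow in $G/H$, then lift each move adding $\lambda$ along a lifted cycle $C$, keeping the invariant $f_1\ne0$ on $E(H)$. Before each move, first make an $A_2$-adjustment on $H$, which is safe because $f_1\ne0$ there; it makes $f_2\ne0$ on $H$. Then make an $A_1$-adjustment, which is safe because now $f_2\ne0$ there. Choose it by $(A_1,1)$-choosability, with the forbidden value on $e$ depending on $f(e)$, $\lambda$ and the sign of $e$ on $C$, so that $f_1\ne0$ on $H$ after $\lambda$ is added.

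This invariant is what breaks your circularity. The lifted sequence ends at a flow that equals the target off $H$ and has $f_1\ne0$ on $H$, and your Step~1 then finishes the proof.
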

\begin{proof}
Let $n:=|V(G)|$. Since $A_1$ and $A_2$ are admissible, $G$ admits nowhere-zero $A_1$- and $A_2$-flows $f_1'$ and $f_2'$. Let $f':=f_1'+f_2'$; note that $f'$ is nowhere-zero since $A_1\cap A_2=\{0_A\}$. We show that every nowhere-zero $A$-flow is equivalent to $f'$ by induction on $n$.

Let $f$ be a nowhere-zero $A$-flow, and let $E_1:=\{e\in E(G): f(e)\in A_1\}$ and $E_2:=\{e\in E(G): f(e)\in A_2\}$, with $E_0:=E_1\cup E_2$. Since $A_1\cap A_2=\{0_A\}$, the sets $E_1$ and $E_2$ are disjoint. If $|E_0|\ge n$, then the subgraph induced by $E_0$ contains a cycle $C$. 
We orient $C$ cyclically 
Choose an arbitrary edge $e\in E(C)$, 
and pick elements $a_1\in A_1\setminus\{0_{A_1},-f(e)\}$ and $a_2\in A_2\setminus\{0_{A_2},-f(e)\}$. 
This is possible because $|A_1|,|A_2|\ge 3$. Adding $a_1+a_2$ along $C$ preserves flow conservation and keeps all edges nonzero, while strictly decreasing $|E_0|$, since we remove $e$. Hence we assume that $|E_0|<n$.

Since $E_1$ and $E_2$ are disjoint, one of them has size less than $n/2$; by symmetry we assume that $|E_2|<n/2$. Let $E':=E(G)\setminus E_2$. Since $G$ is $4$-edge-connected, $|E(G)|\ge 2n$, and thus $|E'|>3n/2$. By Corollary~\ref{sparse-good-subgraph-unified}(1), the subgraph induced by $E'$ contains a subgraph $H$ that is either a cycle of length at most $3$ or a $(\le 1)$-subdivision of a $3$-edge-connected graph. In particular, $f(e)\notin A_2$ for every $e\in E(H)$.

We contract $H$ to a single vertex and obtain a graph $G^*$. Let $f^*$ and $(f')^*$ be the induced flows on $G^*$. By the induction hypothesis, $f^*$ and $(f')^*$ are equivalent in $\mathcal{F}(G^*,A)$, so there exists a reconfiguration sequence in $G^*$ transforming $f^*$ into $(f')^*$.

We lift this sequence to $G$. Each cycle in $G^*$ corresponds to a cycle in $G$ obtained by replacing occurrences of the contracted vertex with paths in the connected subgraph $H$. Throughout this process, we maintain the invariant that $f(e)\notin A_2$ for every $e\in E(H)$. To achieve this, before each lifted move adding some value $\lambda\in A$ along a lifted cycle $C$, we first modify the flow on $H$ by adding an $A_2$-flow $g_2$ so that every edge of $H$ has flow value outside $A_1$. For each edge $e\in E(H)$, the set $\{x\in A_2: f(e)+x\in A_1\}$ has size at most 1, since otherwise 2 such elements would differ by an element of $A_1\cap A_2=\{0_A\}$. As $A_2$ is admissible and $H$ is $(A_2,1)$-flow-choosable, such an $A_2$-flow $g_2$ exists. Decomposing $g_2$ into cycle-flows and adding them one-by-one preserves cosets of $A_2$ and ensures that no edge becomes 0. So after this step we have $f(e)\notin A_1\cup A_2$ for all $e\in E(H)$.

Next, we add an $A_1$-flow on $H$ to prepare for adding $\lambda$ along $C$. Fix a cyclic orientation of $C$, and for each edge $e\in E(H)$ let $\varepsilon_e\in\{-1,0,1\}$ denote its signed incidence in $C$, where $\varepsilon_e=1$ if $e$ is traversed by $C$ in the forward direction, $\varepsilon_e=-1$ if it is traversed in the reverse direction, and $\varepsilon_e=0$ if $e\notin E(C)$. For each $e\in E(H)$, the set $\{x\in A_1: f(e)+x+\varepsilon_e\lambda\in A_2\}$ has size at most 1. Since $A_1$ is admissible and $H$ is $(A_1,1)$-flow-choosable, we obtain such an $A_1$-flow $g_1$ so that every edge of $H$ has flow value outside $A_2$. Decomposing $g_1$ into cycle-flows and adding them one-by-one preserves cosets of $A_1$ and maintains the nowhere-zero condition. After this preparation, adding $\lambda$ along $C$ is valid both inside and outside $H$, and the invariant $f(e)\notin A_2$ on $H$ is restored.

Repeating this procedure for each step of the sequence in $G^*$ yields a nowhere-zero $A$-flow $r$ on $G$ that agrees with $f'$ outside $H$ and satisfies $r(e)\notin A_2$ for every $e\in E(H)$. Since $f'(e)\notin A_2$ for all $e\in E(G)\setminus E(H)$, the flow $r$ is nowhere-zero on all edges.

Finally, since $A=A_1\oplus A_2$, we decompose $f'-r$ as $h_1+h_2$, where $h_1$ is an $A_1$-flow and $h_2$ is an $A_2$-flow. We first add the cycle decomposition of $h_2$ to $r$, which preserves cosets of $A_2$ at every step and therefore maintains the nowhere-zero property. The resulting flow $r'$ has the same $A_2$-component as $f'$ and thus $r'(e)\notin A_1$ for all $e\in E(H)$. We then add the cycle decomposition of $h_1$; this preserves cosets of $A_1$ at every step and again maintains the nowhere-zero property. The resulting flow is exactly $f'$, as desired.
\end{proof}

\subsection{Large Cyclic Factors}

We next handle groups with a large cyclic factor, $\Z_p$.  Our proof first prepares the $\Z_p$-coordinates so that they avoid a fixed interval around $0$; after that, the $\Z_p$-coordinates can be reconfigured directly, and the other coordinates can be changed freely.

\begin{cor}
    \label{local-Z28-cor}
    Let $H$ be an orientation of either a $(\le 5)$-cycle or a $(\le 2)$-subdivision of a $3$-edge-connected graph.  For every function $\psi:E(H)\to\Z_{28}$, there exists a $\Z_{28}$-flow $f$ on $H$, zero values allowed, such that
    \[
        f(e)\ne \psi(e)
        \qquad\text{for every }e\in E(H).
    \]
\end{cor}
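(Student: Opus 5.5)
This is Lemma~\ref{local-subdivision-choosability-lem} with $A=\Z_{28}$, $k=1$, $\ell=5$, $r=2$. Each edge $e$ gets the forbidden set $F(e):=\{\psi(e)\}$, so $|F(e)|\le 1=k$. An $A$-flow avoiding all $F(e)$ is exactly a $\Z_{28}$-flow $f$ with $f(e)\ne\psi(e)$ for every edge. So it suffices to check the two hypotheses of that lemma.

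Hypothesis (i) is $|A|>k\ell$, which here reads $28>5$ and is immediate. Hypothesis (ii) asks that every $3$-edge-connected oriented graph be $(\Z_{28},3)$-flow-choosable, since $k(r+1)=3$. I will obtain this from Lemma~\ref{subgroup-flow-choosable-lem} with $k=3$. That lemma needs a subgroup $H\le\Z_{28}$ with $|H|\ge 4$ and index $[\Z_{28}:H]\ge 7$. The subgroup of order $4$, namely $H=\langle 7\rangle=\{0,7,14,21\}$, has index $7$, so both conditions hold with equality. Lemma~\ref{subgroup-flow-choosable-lem} then gives that every $3$-edge-connected graph is $(\Z_{28},3)$-flow-choosable.

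This is presumably why $28=4\cdot 7$ is chosen. Theorem~\ref{LT-lem} would instead require $|A|>8\cdot 27=216$, so the subgroup route is the essential one.

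Combining the two hypotheses, Lemma~\ref{local-subdivision-choosability-lem} yields that every orientation of a $(\le 5)$-cycle, and every orientation of a $(\le 2)$-subdivision of a $3$-edge-connected graph, is $(\Z_{28},1)$-flow-choosable. This is exactly the claim.

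There is essentially no obstacle beyond confirming that the parameters match. The one point needing care is that the choosability in Lemma~\ref{subgroup-flow-choosable-lem} is stated for all $3$-edge-connected graphs, including multigraphs. This is what Lemma~\ref{local-subdivision-choosability-lem} applies to the branch graph $J$, where sign changes along subdivided paths only relabel forbidden values and do not change their number.
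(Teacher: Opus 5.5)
Your proof is correct and is exactly the paper's argument. The order-$4$, index-$7$ subgroup $\langle 7\rangle$ of $\Z_{28}$ gives $(\Z_{28},3)$-flow-choosability of $3$-edge-connected graphs via Lemma~\ref{subgroup-flow-choosable-lem} with $k=3$, and Lemma~\ref{local-subdivision-choosability-lem} with $r=2$, $\ell=5$, $k=1$ then finishes; your parameter checks ($28>5$, $|H|=4\ge k+1$, $[\Z_{28}:H]=7\ge 2k+1$) are all right.
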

\begin{proof}
    Since $\Z_{28}$ has a subgroup of order $4$ and index $7$, Lemma~\ref{subgroup-flow-choosable-lem} with $k=3$ implies that every $3$-edge-connected graph is $(\Z_{28},3)$-flow-choosable.  The result now follows from Lemma~\ref{local-subdivision-choosability-lem} with $A=\Z_{28}$, $r=2$, $\ell=5$, and $k=1$.
\end{proof}

\begin{cor}
    \label{local-28-flow-patch-cor}
    Let $H$ be an orientation of either a $(\le 5)$-cycle or a $(\le 2)$-subdivision of a $3$-edge-connected graph.  For every function
    \[
        \psi:E(H)\to[-27,27],
    \]
    there exists an integer $28$-flow $h$ on $H$, zero values allowed, such that
    \[
        h(e)\ne \psi(e)
        \qquad\text{for every }e\in E(H).
    \]
\end{cor}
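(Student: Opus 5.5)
The plan is to deduce the statement from Corollary~\ref{local-Z28-cor} by lifting a $\Z_{28}$-flow to an integer $28$-flow, using Lemma~\ref{Zk-k-equiv-lem}.

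First, given $\psi:E(H)\to[-27,27]$, reduce it modulo $28$ to get $\bar\psi:E(H)\to\Z_{28}$. Corollary~\ref{local-Z28-cor} then supplies a $\Z_{28}$-flow $g$ on $H$, zero values allowed, with $g(e)\ne\bar\psi(e)$ for every edge $e$. Next, lift $g$ to an integer flow $h$ with $|h(e)|\le 27$ and $h(e)\equiv g(e)\pmod{28}$ for all $e$. If this works, then $h(e)\equiv\psi(e)$ would force $g(e)=\bar\psi(e)$, so $h(e)\not\equiv\psi(e)\pmod{28}$. In particular $h(e)\ne\psi(e)$, as required.

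The only nontrivial step is the lift, because Lemma~\ref{Zk-k-equiv-lem} is stated for nowhere-zero flows, while $g$ may vanish on some edges. I would rerun its proof essentially verbatim. Start with representatives $h_0(e)\in\{0,1,\dots,27\}$ of $g(e)$. Whenever some vertex has positive net flow, find a path along edges of positive value (or in-edges of negative value) to a vertex with negative net flow. Subtract $28$ along that path, and repeat. Each such move lowers the total absolute net flow by $56$.

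Edges carrying value $0$ should be treated as unusable: they are never followed in the path search, so they stay $0$. Since every edge value is always in $[-27,27]$ and is changed by $\pm28$ only from the appropriate sign, it remains in $[-27,27]$ and keeps its residue. The existence of the path follows from the standard cut argument. Let $S$ be the set of vertices reachable from $x$ along usable edges. Every edge leaving $S$ then carries non-positive outward flow, so the net flow out of $S$ is non-positive. Since $x\in S$ has positive net flow, some vertex of $S$ must have negative net flow, which gives the desired endpoint.

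An alternative that avoids re-proving anything is Lemma~\ref{decompose-flow-lem-II} applied to a suitable integer lift. However, the direct rerun of Tutte's argument is cleaner, and I expect checking that argument to be the only real obstacle.
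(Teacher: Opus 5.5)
Your proposal is correct and follows the paper's proof: reduce $\psi$ modulo $28$, apply Corollary~\ref{local-Z28-cor}, and lift the resulting $\Z_{28}$-flow to an integer flow with values in $[-27,27]$ via Lemma~\ref{Zk-k-equiv-lem}. The paper applies that lemma without comment, even though it is stated for nowhere-zero flows and does not justify why the path exists. Your rerun, which treats zero-valued edges as unusable and uses the cut argument to find the path, supplies those missing details correctly.
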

\begin{proof}
    Apply Corollary~\ref{local-Z28-cor} to the residues of $\psi$ modulo $28$, and then lift the resulting $\Z_{28}$-flow to an integer $28$-flow using Lemma~\ref{Zk-k-equiv-lem}.  Congruence avoidance modulo $28$ implies integer avoidance because both $h(e)$ and $\psi(e)$ lie in $[-27,27]$.
\end{proof}

\begin{lem}
    \label{away-from-boundary-lem}
    Let $A$ be an abelian group.  Let $p$ and $q$ be coprime positive integers with
        $q\ge 55$
        {and}
        $p>q^2\cdot 27+27$.
    Let
        $I:=\{-27,-26,\ldots,26,27\}\subseteq\Z_p.$
    If $G$ is $4$-edge-connected, then for every nowhere-zero $(\Z_p\times A)$-flow $f$ in $G$, there exists a nowhere-zero $(\Z_p\times A)$-flow $g$ in $G$ such that $f$ and $g$ lie in the same component of $\F(G,\Z_p\times A)$ and
       $ g(E(G))\cap (I\times A)=\emptyset.$
\end{lem}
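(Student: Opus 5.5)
We would argue by induction on the number $b(f)$ of \emph{bad} edges, i.e.\ edges $e$ whose $\Z_p$-coordinate $f_p(e)$ lies in $I$. If $b(f)=0$ we take $g=f$. Otherwise we find a small ``patch'' subgraph $H$ containing a bad edge. We then reconfigure only on $H$, changing only the $\Z_p$-coordinate, so that $b$ strictly decreases and no edge ever becomes $0$. The $A$-coordinate is left untouched throughout. Since a flow is nowhere-zero whenever its $\Z_p$-coordinate is nonzero, it suffices to keep $f_p(e)\neq 0$ on every edge of $H$. Edges outside $H$ never change.

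\textbf{Choosing $H$.} Let $B$ be the set of bad edges. If $B$ contains a cycle, we apply the cycle step below directly. In general we apply Corollary~\ref{sparse-good-subgraph-unified}(2) to a suitable spanning subgraph $G'$ with $|E(G')|>\frac43|V(G)|-2$. We would take $G'$ to consist of $B$ together with the edges whose $\Z_p$-value is ``generic'' at the scales we intend to use. Since $|E(G)|\ge 2n$, we can afford to discard up to about $\frac23 n$ edges. We would like to discard only edges whose values are \emph{harmful} for a chosen scale $\lambda\in\{1,2,\dots,q\}$: those lying in $\lambda[-27,27]+I$ or similar. Using $q\ge 55$ and $p>27q^2+27$, the sets $\lambda\cdot[-27,27]$ for different $\lambda$ overlap little in $\Z_p$. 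Hence each fixed edge is harmful for only $O(1)$ scales, and averaging over $\lambda$ gives a scale for which few edges are harmful. Coprimality of $p$ and $q$, which makes multiplication by $q$ an automorphism of $\Z_p$, is what we would use to make this overlap count precise. The result is a $(\le5)$-cycle or a $(\le2)$-subdivision $H$ of a $3$-edge-connected graph inside $G'$ that meets $B$; if $H$ misses $B$, we would iterate on the density argument, or restrict the density count to the part near $B$.

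\textbf{The patch step.} On $H$ we seek an integer $28$-flow $h$ (zeros allowed) and intend to add $\lambda h$ to $f_p$. For each bad edge $e$ with $f_p(e)=s\in I$, we want $s+\lambda h(e)\notin I$; it suffices to forbid the single value $h(e)=0$, because $\lambda$ is large relative to $54$. For each good edge, we forbid the at most one value of $h(e)\in[-27,27]$ that would push it into $I$. Corollary~\ref{local-28-flow-patch-cor} yields such an $h$ avoiding one prescribed value per edge. To realize $\lambda h$ by legal moves, we decompose $h$ via Lemma~\ref{decompose-flow-lem-II} into cycle flows whose partial sums stay between $0$ and $h(e)$. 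The intermediate values on each edge then lie in $f_p(e)+\lambda[0,h(e)]$, and we must check that none of them equals $0$ in $\Z_p$. For good edges this is exactly the non-harmfulness of the chosen scale. For bad edges we need $s+\lambda t\neq0$ for $1\le |t|\le 27$, which holds since $|s|\le27<\lambda$ and $27\lambda<p$.

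\textbf{Main obstacle.} The delicate step is the choice of $H$ together with the scale $\lambda$, and the associated counting. We must guarantee simultaneously that the subgraph fed to Corollary~\ref{sparse-good-subgraph-unified}(2) is dense enough, that it contains a bad edge, and that every non-bad edge of it tolerates the whole trajectory $f_p(e)+\lambda[0,h(e)]$ with only one forbidden value of $h(e)$. We would first try a two-level scheme, with a scale $\lambda$ of order $q$ and values of order up to $27q^2$, and pigeonhole over residues modulo $q$. This is where the specific bound $p>27q^2+27$ should be exactly what is needed. If a single pigeonhole does not suffice, the fallback is an extra induction on a secondary potential, such as the number of edges with values in $q[-27,27]$.
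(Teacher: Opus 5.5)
Your scheme stalls at the step you flag yourself. The potential $b(f)$ drops only if the patch $H$ contains a bad edge, and nothing forces this.

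Corollary~\ref{sparse-good-subgraph-unified}(2) is a pure density statement. It produces a short cycle or a $(\le2)$-subdivision of a $3$-edge-connected graph \emph{somewhere} in $G'$, with no control over where. Take $B=\{e_0\}$ with $G$ of large girth near $e_0$. Then every admissible $H$ through $e_0$ contains a cycle through $e_0$, so it has many edges. Hence you cannot pick $\lambda$ to keep a prescribed such $H$ free of harmful edges, and you give no other reason why $G'$ should contain one. If the structure you find consists only of good edges, your move does not decrease $b$.

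Neither fallback repairs this. There is no density guarantee near a prescribed edge set. Deleting the $H$ you found and repeating only consumes density and gives no reason to eventually reach $B$.

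The counting over scales, by contrast, is not the real difficulty. Call an edge harmful only when its trajectory can hit $0$, i.e.\ $f_p(e)\in\lambda\cdot([-27,27]\setminus\{0\})$; landing in $I$ is already excluded by the single forbidden value. Then for $55\le\lambda<(p-54)/54$, each edge is harmful for at most $54$ of more than $1400$ candidate scales, and averaging leaves $|E(G')|>\frac32|V(G)|$. This single-scale scheme never uses coprimality of $p$ and $q$, which signals that the mechanism the hypotheses are built for is missing.

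The paper avoids localization altogether and inducts on $|V(G)|$ rather than on bad edges.

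\textbf{Choosing $H$.} Call $e$ $s$-safe if $f_A(e)\ne0_A$ or $q^sf_p(e)\notin I$. Because $p>27q^2+27$, each edge fails this for at most one $s\in\{0,1,2\}$. So some $s_0$ gives at least $\frac23|E(G)|\ge\frac43|V(G)|$ safe edges, and $H$ is taken among them, whether or not it meets a bad edge.

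\textbf{Contract and lift.} One cleans $H$, contracts it, applies induction to the still $4$-edge-connected $G/H$, and lifts the resulting reconfiguration sequence back to $G$.

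\textbf{The local claim.} Suppose the current flow is $s$-safe on $H$ and a lifted move adds $\lambda$ along a cycle $C$ through $H$. First add $q^{-s}h$ inside $H$, decomposed as in Lemma~\ref{decompose-flow-lem-II}. Here $h$ is a $28$-flow from Corollary~\ref{local-28-flow-patch-cor} avoiding, on each edge, the unique value that would put $q^s(x_p(e)+\varepsilon_e\lambda_p)+h(e)$ into $qI$. This makes the move legal and leaves $H$ $(s-1)$-clean.

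\textbf{Level bookkeeping.} Each lifted step lowers the level by one. Invertibility of $q$ modulo $p$ and $q^{\varphi(p)}\equiv1$ let the level wrap around to a positive value. Finitely many further internal applications then bring $H$ to level $0$, i.e.\ all $\Z_p$-values on $H$ lie outside $I$.

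This multi-scale bookkeeping is the missing idea: a fresh scale $q^{-s}$ at every lifted step, so that successive patches on $H$ do not interfere. Your ``two-level scheme'' remark gestures toward it but does not supply it.
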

\begin{proof}
For a flow $x$, write $x(e)=(x_p(e),x_A(e))$. For an integer $s$ and a subgraph $H\subseteq G$, we say that $x$ is \emph{$s$-clean on $H$} if $q^s x_p(e)\notin I$ for every $e\in E(H)$. We say that $x$ is \emph{$s$-safe on $H$} if, for every $e\in E(H)$, either $x_A(e)\neq 0_A$ or $q^s x_p(e)\notin I$. Thus $s$-clean implies $s$-safe.
We first prove a local adjustment claim.

\medskip
\noindent\textbf{Claim.}
Let $H$ be either a cycle of length at most $5$ or a $(\le 2)$-subdivision of a $3$-edge-connected graph. Let $x$ be a nowhere-zero $(\mathbb{Z}_p\times A)$-flow that is $s$-safe on $H$. Let $C$ be either a cycle of $G$ or the empty subgraph, and let $\lambda=(\lambda_p,\lambda_A)\in\mathbb{Z}_p\times A$. Assume that for every edge $e\in E(G)\setminus E(H)$, the flow obtained after adding $\lambda$ on $C$ remains nonzero on $e$ in $\mathbb{Z}_p\times A$. Then, by first using only cycle reconfigurations supported inside $H$, and then adding $\lambda$ on $C$, we can reach a nowhere-zero flow $y$ that is $(s-1)$-clean on $H$.

\medskip
\noindent\emph{Proof of the claim.}
Fix a cyclic orientation of $C$ if $C\neq\emptyset$. For each $e\in E(H)$, let $\varepsilon_e\in\{-1,0,1\}$ be $0$ if $e\notin E(C)$, and otherwise the sign with which $e$ is used by the cyclic orientation of $C$. Let $x_e:=q^s(x_p(e)+\varepsilon_e\lambda_p)\in\mathbb{Z}_p$.

We call $z\in I$ \emph{bad} for $e$ if $x_e+z\in qI$. Each edge has at most 1 bad value. Indeed, if $z_1,z_2\in I$ are both bad, then there exist $r_1,r_2\in I$ such that $x_e+z_i\equiv q r_i\pmod p$ for each $i\in[2]$. Subtracting these congruences gives $z_1-z_2\equiv q(r_1-r_2)\pmod p$.
Since $|z_1-z_2-q(r_1-r_2)|\le 54(q+1)<p$, the above congruence must in fact be an equality over the integers. Because $q\ge 55$ and $|z_1-z_2|\le 54$, this forces $r_1=r_2$, and hence $z_1=z_2$.

Thus each edge admits at most one forbidden value in $I$. Let $\psi(e)$ be this value if it exists, and choose $\psi(e)\in I$ arbitrarily otherwise. By Corollary~\ref{local-28-flow-patch-cor}, there exists a $28$-flow $h$ on $H$ such that $h(e)\neq \psi(e)$ for every $e\in E(H)$.
We next decompose $h$ as in Lemma~\ref{decompose-flow-lem-II}, so that every partial sum $h'$ satisfies $h'(e)\in I$ for every edge $e\in E(H)$.

Starting from $x$, we add the $\mathbb{Z}_p$-flow $q^{-s}h$ to the first coordinate on $H$, one cycle at a time according to this decomposition. During these internal moves, the $A$-coordinate on $H$ remains unchanged. Moreover, nowhere-zero-ness is preserved: if $x_A(e)\neq 0_A$, then $e$ stays nonzero; if $x_A(e)=0_A$, then $s$-safety guarantees $q^s x_p(e)\notin I$, 
and adding any partial contribution in $I$ cannot create a zero first coordinate.  This final claim merits a bit more explanation.  Suppose that $x_p(e)+\sum_{i=1}^{\ell}q^{-s}h_i(e)\equiv0\pmod p$ for some choice of $\ell$.
Now $q^sx_p(e)\equiv-\sum_{i=1}^{\ell}h_i(e)\in I$, a contradiction.  (The point is that $I$ contains all partial sums of these $h_i$'s, and the negatives of these sums, which contradicts our assumption that $x_p(e)$ is $s$-safe.)

After completing these internal adjustments, we add $\lambda$ on $C$. This step is safe outside $H$ by assumption. For $e\in E(H)$, the resulting flow $y$ satisfies $q^s y_p(e)=x_e+h(e)$. Since $h(e)\neq \psi(e)$ for every $e$, this implies $q^s y_p(e)\notin qI$, and hence $q^{s-1}y_p(e)\notin I$. Therefore $y$ is $(s-1)$-clean on $H$, as required. This completes the proof of the claim.
\hfill $\lozenge$

\medskip

We now proceed with the main proof by induction on $|V(G)|$. The case $|V(G)|=1$ is immediate.

For $s\in\{0,1,2\}$, we say an edge $e$ is $s$-safe if either $f_A(e)\neq 0_A$ or $q^s f_p(e)\notin I$. If $f_A(e)\neq 0_A$, then $e$ is $s$-safe for all three choices of $s$. If $f_A(e)=0_A$, then $f_p(e)\neq 0$ since $f$ is nowhere-zero. We claim that at most one $s\in\{0,1,2\}$ satisfies $q^s f_p(e)\in I$. Indeed, if $q^{s_1}f_p(e)=r_1\in I$ and $q^{s_2}f_p(e)=r_2\in I$ with $s_1<s_2$, then letting $d=s_2-s_1\in\{1,2\}$ we obtain $q^d r_1-r_2\equiv 0\pmod p$. Since $|q^d r_1-r_2|\le q^2\cdot 27+27<p$, this congruence must be an equality over the integers, which is impossible unless $r_1=0$, contradicting $f_p(e)\neq 0$. Hence each edge is safe for at least two values of $s$.

Therefore there exists $s_0\in\{0,1,2\}$ such that the set $E'$ of $s_0$-safe edges satisfies $|E'|\ge \frac{2}{3}|E(G)|$. Since $G$ is $4$-edge-connected, we have $|E(G)|\ge 2|V(G)|$, and hence $|E'|\ge \frac{4}{3}|V(G)|$.

Let $G'$ be the subgraph induced by $E'$, after deleting isolated vertices. If $|V(G')|\le 2$, then $G'$ contains a $2$-cycle. Otherwise, since $|E(G')|=|E'|\ge \frac{4}{3}|V(G)|>\frac{4}{3}|V(G')|-2$, we apply Corollary~\ref{sparse-good-subgraph-unified}(2). Hence in all cases, $G'$ contains a subgraph $H$ that is either a $(\le 5)$-cycle or a $(\le 2)$-subdivision of a $3$-edge-connected graph.

Apply the claim with $x=f$, $s=s_0$, $C=\emptyset$, and $\lambda=0$. We obtain a flow $f_0$ equivalent to $f$ that is $(s_0-1)$-clean on $H$.

Contract $H$ to a single vertex, delete loops, and call the resulting graph $G^*$. Then $G^*$ is again $4$-edge-connected. Let $f^*$ be the restriction of $f_0$ to $G^*$. By induction, $f^*$ is equivalent in $\mathcal{F}(G^*,\mathbb{Z}_p\times A)$ to a flow $g^*$ satisfying $g^*(E(G^*))\cap (I\times A)=\emptyset$.

We lift a reconfiguration sequence from $f^*$ to $g^*$ step by step. Suppose a step in $G^*$ adds $\lambda_i\in\mathbb{Z}_p\times A$ on a cycle $C_i^*$. We lift $C_i^*$ to a cycle $C_i$ in $G$ by replacing passages through the contracted vertex with paths inside $H$. If the current flow is $s$-clean on $H$, then it is $s$-safe on $H$, so the claim applies with $C=C_i$ and $\lambda=\lambda_i$. Outside $H$, safety follows from the corresponding step in $G^*$. Hence each lifted step preserves the controlled decrease of cleanliness on $H$.

After lifting the entire sequence, we obtain a flow $y$ that agrees with $g^*$ outside $H$ and is $s$-clean on $H$ for some positive integer $s$ (positivity uses that $q^{\phi(p)}\equiv1\pmod p)$.

Finally, we apply the claim repeatedly with $C=\emptyset$ and $\lambda=0$. Each application reduces the cleanliness level on $H$ by 1 without affecting edges outside $H$. Repeating this process $s$ times yields a flow $g$ such that $g_p(e)\notin I$ for all $e\in E(H)$. Since $g$ already avoids $I\times A$ outside $H$, we conclude that $g(E(G))\cap (I\times A)=\emptyset$.

Every step preserves the component of $\mathcal{F}(G,\mathbb{Z}_p\times A)$, so $g$ is equivalent~to~$f$. 
\end{proof}

\begin{lem}
\label{away-connected-lem}
Fix an integer $p\ge 17$, let $G$ be an oriented $4$-edge-connected graph, and let
$J:=\{-6,-5,\ldots,5,6\}\subseteq \mathbb{Z}_p$.
If $f$ and $g$ are $\mathbb{Z}_p$-flows in $G$ such that $f(E(G))\cup g(E(G))$ is disjoint from $J$, then $f$ and $g$ lie in the same component of $\mathcal{F}(G,\mathbb{Z}_p)$.
\end{lem}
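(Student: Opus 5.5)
We view $f$ and $g$ as flows with all values in the ``far'' region $\Z_p\setminus J$ and try to reconfigure $f$ into $g$ directly, writing $d:=g-f$ as a sum of few cycle-flows and adding them one at a time. The danger is that some intermediate value hits $0$. To control this, we prepare $f$ by local moves inside short structures, in the same spirit as the claim inside Lemma~\ref{away-from-boundary-lem}, but now with the smaller window $J$ and only the single group $\Z_p$. The slack is that $p\ge 17$, so $\Z_p\setminus J$ has at least $4$ elements, and every value of $f$ and $g$ is at distance at least $7$ from $0$.

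The first step is an induction on $|V(G)|$ modeled on Lemma~\ref{2group-flow-thm} and Lemma~\ref{away-from-boundary-lem}. Since $G$ is $4$-edge-connected, $|E(G)|\ge 2|V(G)|>\frac32|V(G)|-2$. Corollary~\ref{sparse-good-subgraph-unified}(1) then gives a subgraph $H$ that is a $(\le 3)$-cycle or a $(\le 1)$-subdivision of a $3$-edge-connected graph. We contract $H$ to obtain $G^*$, which is still $4$-edge-connected; its restricted flows still avoid $J$. By induction, $f^*$ and $g^*$ are connected in $\F(G^*,\Z_p)$ by some sequence. It is not clear a priori that intermediate flows in that sequence avoid $J$, so we would strengthen the induction hypothesis to: ``there is a reconfiguration sequence all of whose intermediate flows are nowhere-zero.'' That strengthened statement is exactly the lemma's conclusion, so the induction is self-consistent.

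The second step lifts each move ``add $\lambda$ on $C^*$'' to $G$. We lift $C^*$ to a cycle $C$ through a path inside $H$. Before adding $\lambda$, we add a small integer flow on $H$ so that no edge of $H\cap C$ has value $-\varepsilon_e\lambda$, and no edge of $H$ becomes $0$ along the way. Edges of $H$ start with values outside $J$. We choose the preparatory integer flow $h$ with all partial sums in $\{-6,\dots,6\}$: Lemma~\ref{decompose-flow-lem-II} gives monotone partial sums for a $7$-flow, and a $(\Z_7,1)$-choosability statement gives $h(e)$ avoiding one forbidden value per edge. Since each edge value lies outside $J$, no partial sum creates a zero. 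The $(\Z_7,1)$-choosability would come from Lemma~\ref{local-subdivision-choosability-lem} together with flow-choosability of $3$-edge-connected graphs, lifted to integers via Lemma~\ref{Zk-k-equiv-lem} as in Corollary~\ref{local-28-flow-patch-cor}. After the lifted move, the values on $H$ may lie in $J$, so the invariant must be restored at the end. For that, we finally match $g$ on $H$: the difference is a flow supported in $H$ modulo $E(G)\setminus E(H)$, and we change it by cycle moves inside $H$. These are valid because $H$, being a $(\le 3)$-cycle or a subdivided $3$-edge-connected graph, is well-behaved for flows in $\Z_p$.

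The main obstacle is that the local adjustment needs values on $H$ to stay far from $0$ after each lifted move, while a lifted move can drop them into $J$. I would first try to keep on $H$ only the weaker invariant ``nonzero'', and handle each edge of $H$ in $J$ by a two-stage preparation. With window $J$ only of radius $6$ and forbidden sets of size $1$, the needed choosability (essentially $(\Z_7,1)$ for $(\le 1)$-subdivisions, i.e.\ $(\Z_7,2)$ for $3$-edge-connected graphs) is the delicate point. I expect checking that this choosability holds, or replacing it by a direct argument that exploits $p\ge 17$, to be the hardest part.
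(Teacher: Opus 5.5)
Your proposal is not yet a proof. The obstacle you flag at the end is a genuine gap, not a detail.

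\textbf{The invariant on $H$ cannot be maintained.} In the lifting step, a move of $G^*$ adds an arbitrary $\lambda\in\Z_p$ along $C\cap H$. Afterwards, edges of $H$ can carry any nonzero value, for example $\pm1\in J$. Your next preparatory flow $h$ is added through the monotone decomposition of Lemma~\ref{decompose-flow-lem-II}. On each edge $e$, its partial sums run through every integer between $0$ and $h(e)$. So if the current value on $e$ is $c\in J\setminus\{0\}$, then $h(e)$ must not lie beyond $-c$. That forbids up to six values per edge, not one. Hence the invariant ``values on $H$ avoid $J$'' is destroyed by the first lifted move, and one-value choosability cannot restore it. In Lemma~\ref{away-from-boundary-lem} the paper handles exactly this problem only through the rescaling by $q^s$ and the bound $p>q^2\cdot 27+27$. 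Nothing comparable is available when $p\ge17$.

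\textbf{The choosability you need is not available.} You need $(\Z_7,1)$-choosability for $(\le1)$-subdivisions, i.e.\ $(\Z_7,2)$-choosability for $3$-edge-connected graphs. The paper's tools do not give this. Lemma~\ref{subgroup-flow-choosable-lem} requires a subgroup of order at least $3$ and index at least $5$, which $\Z_7$ lacks. Theorem~\ref{LT-lem} requires $|A|>64$.

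\textbf{The final matching step is unproved.} ``Match $g$ on $H$ by cycle moves inside $H$'' asserts that $H$ is $\beta$-flow-connected over $\Z_p$. This is not established anywhere for subdivided $3$-edge-connected graphs; it is a reconfiguration claim of the same kind as the lemma itself. Your remark that the strengthened induction hypothesis is ``self-consistent'' does not help. The difficulty lies in the intermediate flows of the $G^*$-sequence, not in its endpoints.

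\textbf{The missing idea: a common hub flow, with no induction or local choosability.} The paper routes both flows through one fixed flow.
\begin{itemize}
\item Lift $f$ to an integer $p$-flow $f'$ (Lemma~\ref{Zk-k-equiv-lem}). Since $f$ avoids $J$, we have $6<|f'(e)|<p-6$ on every edge.
\item By $4$-edge-connectivity, $G$ has a nowhere-zero integer $4$-flow $h'$. This is the only use of the hypothesis.
\item Take an integer flow $f'_4$ with $f'_4\equiv f'\pmod 4$ and $|f'_4(e)|\le3$. This comes from Tutte-lifting $f'\bmod 4$ with zeros allowed, and it ensures $h'-f'_4$ is a genuine flow.
\item Add the monotone cycle decomposition of $h'-f'_4$ to $f'$. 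Each edge moves by at most $6$, so every intermediate value stays in $(-p,p)\setminus\{0\}$. This reaches $f'-f'_4+h'$.
\item Subtract $4$ times the partial sums of a monotone decomposition of $(f'-f'_4)/4$. Every value stays congruent to $h'(e)\not\equiv0\pmod 4$ and lies between $h'(e)$ and $(f'-f'_4+h')(e)$, hence in $(-p,p)\setminus\{0\}$. This reaches $h'$.
\end{itemize}
Doing the same for $g$ and applying Lemma~\ref{k-Zk-hom-lem} finishes the proof.
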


\begin{proof}
Since $G$ is $4$-edge-connected, it admits a nowhere-zero integer $4$-flow $h'$. 
Let $h(e):=h'(e)\pmod 4$ for all $e\in E(G)$.
Let $f,g$ be $\mathbb{Z}_p$-flows in $G$ such that $f(E(G))\cup g(E(G))$ is disjoint from $J$. For each of $f$ and $g$, choose integer flows $f',g'$ satisfying $f'(e)\equiv f(e)\pmod p$ and $g'(e)\equiv g(e)\pmod p$ with $0<|f'(e)|,|g'(e)|<p$ for all edges $e$. Because $f$ and $g$ avoid $J$, we have $6<|f'(e)|,|g'(e)|<p-6$ for every edge $e$.

\begin{figure}[!h]
\centering
\begin{tikzpicture}[node distance=0.1cm and 0.325cm, scale=1.2]
  
  \node (fprime) {$f'$};
  \node (lr1)    [right=of fprime] {$\longleftrightarrow$};
  \node (term1)  [right=of lr1]    {$f'-f_4'+ h'$};
  \node (lr2)    [right=of term1]  {$\longleftrightarrow$};
  \node (hprime) [right=of lr2]    {$h'$};
  \node (lr3)    [right=of hprime] {$\longleftrightarrow$};
  \node (term2)  [right=of lr3]    {$g' -g_4'+h'$};
  \node (lr4)    [right=of term2]  {$\longleftrightarrow$};
  \node (gprime) [right=of lr4]    {$g'$};

  \node (arrow1) [below=0.1cm of fprime] {$\uparrow$};
  \node (arrow2) [below=0.1cm of gprime] {$\uparrow$};

  \node (f) [below=0.1cm of arrow1] {$f$};
  \node (g) [below=0.1cm of arrow2] {$g$};

\end{tikzpicture}
	\captionsetup{width=.720\textwidth}
	\caption{To show that $\mathbb{Z}_p$-flows $f$ and $g$ are equivalent, we consider corresponding $p$-flows $f'$ and $g'$.  We show that $f'$ and $g'$ are each equivalent (as $p$-flows) to a common $p$-flow $h'$ (which is also a $4$-flow).%
}
\end{figure}

We first transform $f'$ to $f'-f'_4+h'$, where $f'_4$ is an integer-valued function satisfying $f'_4(e)\equiv f'(e)\pmod 4$ and $|f'_4(e)|\le 3$ for all $e$. Note that $h'-f'_4$ is an integer flow with $|(h'-f'_4)(e)|\le 6$ for every edge $e$.
By Lemma~\ref{decompose-flow-lem-II}, $h'-f'_4$ can be written as a sum of integer flows $(\psi_i)_{i=1}^s$, each supported on a cycle, such that for every edge $e$ and every $j\in[s]$, the partial sum $\sum_{i\le j}\psi_i(e)$ lies between $0$ and $(h'-f'_4)(e)$.

We start from $f'$ and successively add the flows $\psi_1,\ldots,\psi_s$. At each step, only edges on a single cycle are modified, and in total all steps change every affected edge by at most $6$ in absolute value. Since initially $6<|f'(e)|<p-6$, all intermediate values remain nonzero and stay in $(-p,p)$; hence, they define valid $\mathbb{Z}_p$-flows. This yields $f'-f'_4+h'$.

Next we transform $f'-f'_4+h'$ to $h'$.  The difference is $f'-f'_4$, which is divisible by $4$.  Decompose $(f'-f'_4)/4$ as in Lemma~\ref{decompose-flow-lem-II}, and subtract four times the partial cycle-flows.  Every intermediate value is congruent to $h'(e)$ modulo $4$, and hence is nonzero.  It also lies between $h'(e)$ and $(f'-f'_4+h')(e)$, so its absolute value is less than $p$.

Thus, $f'$ is equivalent to $f'-f_4'+h'$, which is equivalent to $h'$.  Similarly, $g'$ is also equivalent to $h'$.  Hence, by transitivity, $f'$ is equivalent to $g'$, as nowhere-zero $p$-flows.  Thus, by Lemma~\ref{k-Zk-hom-lem} we get that $f$ and $g$ are equivalent as $\Z_p$-flows.
\end{proof}

\begin{cor}
    \label{coprime-cor}
    Let $A$ be an abelian group, and let $p,q$ be coprime positive integers with
    \[
        q\ge55
        \qquad\text{and}\qquad
        p>q^2\cdot27+27.
    \]
    If $G$ is a $4$-edge-connected graph, then $\F(G,\Z_p\times A)$ is connected.
\end{cor}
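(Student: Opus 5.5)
Our plan is to combine Lemma~\ref{away-from-boundary-lem} with Lemma~\ref{away-connected-lem}, which is applied coordinatewise to the $\Z_p$-part, and then handle the $A$-part freely. Let $f$ and $g$ be two nowhere-zero $(\Z_p\times A)$-flows on $G$. By Lemma~\ref{away-from-boundary-lem}, each is equivalent to a flow whose $\Z_p$-coordinate avoids $I=\{-27,\dots,27\}$ on every edge. So we may assume that $f_p(e),g_p(e)\notin I$ for all $e$. In particular they avoid $J=\{-6,\dots,6\}\subseteq I$. Note that $p>55^2\cdot 27+27\ge 17$.

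\textbf{Step 1: fix the $A$-coordinate.} While the $\Z_p$-coordinate is nonzero on every edge, any change of the $A$-coordinate along a cycle keeps the flow nowhere-zero. Write $g_A-f_A$ as a sum of $A$-flows, each supported on a cycle, using Lemma~\ref{decompose-flow-lem}. Adding these one at a time to $f$ (in the $A$-coordinate only) is a sequence of valid moves, since $f_p(e)\ne 0$ throughout. This turns $f$ into $\tilde f=(f_p,g_A)$.

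\textbf{Step 2: reconfigure the $\Z_p$-coordinate.} Both $f_p$ and $g_p$ are nowhere-zero $\Z_p$-flows avoiding $J$. By Lemma~\ref{away-connected-lem}, they lie in the same component of $\F(G,\Z_p)$, via some sequence of nowhere-zero $\Z_p$-flows. Applying that sequence to the first coordinate while keeping the $A$-coordinate fixed at $g_A$ gives valid moves in $\F(G,\Z_p\times A)$, because each intermediate $\Z_p$-coordinate is nowhere-zero. This turns $\tilde f$ into $(g_p,g_A)=g$.

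Chaining the equivalences $f\sim\tilde f\sim g$ proves connectivity. The hypotheses of Lemma~\ref{away-from-boundary-lem} are exactly those of the corollary, so the only point to check is that $J\subseteq I$ and $p\ge17$, both of which are immediate. Hence there is no serious obstacle: the main work is already contained in the two lemmas. The subtle point is that Step 1 must be done while the $\Z_p$-coordinate is nonzero everywhere, so that no $A$-move can create a zero. Doing Step 1 before Step 2 ensures this.
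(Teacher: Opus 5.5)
Your proposal is correct and follows the paper's proof essentially step for step: use Lemma~\ref{away-from-boundary-lem} to reach flows whose $\Z_p$-coordinates avoid $I\supseteq J$, then move the $A$-coordinate along cycle-flows while the $\Z_p$-coordinate stays nowhere-zero, and finally carry over the $\Z_p$-sequence from Lemma~\ref{away-connected-lem} with the $A$-coordinate held fixed. Your explicit check that $p\ge 17$, needed for Lemma~\ref{away-connected-lem}, is a detail the paper leaves implicit.
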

\begin{proof}
    Let $I:=[-27,27]\subseteq\Z_p$, and let $f,g$ be nowhere-zero $(\Z_p\times A)$-flows on $G$.  By Lemma~\ref{away-from-boundary-lem}, we may reconfigure $f$ and $g$ to flows
    $%
        f'=(f_1',f_2')%
    $
    and 
    $%
        g'=(g_1',g_2')%
    $
    whose values avoid $I\times A$.
    Since $I$ contains $[-6,6]$, the $\Z_p$-flows $f_1'$ and $g_1'$ are nowhere-zero and avoid $[-6,6]$.  By Lemma~\ref{away-connected-lem}, $f_1'$ and $g_1'$ are equivalent in $\F(G,\Z_p)$.

    The difference $g_2'-f_2'$ is an $A$-flow.  We decompose this flow into cycle-flows and add these cycle-flows,
    one-by-one, to the second coordinate of $(f_1',f_2')$.  All intermediate flows are nowhere-zero because the first coordinate $f_1'$ is nowhere-zero.  This connects $(f_1',f_2')$ to $(f_1',g_2')$.  Now we lift a reconfiguration sequence from $f_1'$ to $g_1'$ in the first coordinate while keeping the second coordinate equal to $g_2'$.  This connects $(f_1',g_2')$ to $(g_1',g_2')=g'$.  Therefore, $f$ and $g$ are equivalent.
\end{proof}

\begin{cor}
    \label{ZpxA-cor}
    Let
    \[
        P_0:=56^2\cdot27+27=84,699.
    \]
    For every prime power $p>P_0$, every finite abelian group $A$, and every $4$-edge-connected graph $G$, the graph $\F(G,\Z_p\times A)$ is connected.
\end{cor}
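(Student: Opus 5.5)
This is a direct application of Corollary~\ref{coprime-cor}. The plan is to choose $q$ so that it is coprime to $p$ and satisfies both $q\ge 55$ and $p>q^2\cdot 27+27$.

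Write $p=\ell^t$ with $\ell$ prime and $t\ge 1$. A positive integer is coprime to $p$ exactly when it is not divisible by $\ell$. We choose $q\in\{55,56\}$ as follows.

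\begin{itemize}
\item If $\ell\nmid 55$, take $q:=55$.
\item Otherwise $\ell\in\{5,11\}$, so $\ell\nmid 56=2^3\cdot 7$, and we take $q:=56$.
\end{itemize}

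Since $55$ and $56$ are consecutive integers, they share no prime factor. Hence at least one of them is coprime to $p$, and the choice above always succeeds.

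In either case $q\ge 55$ holds, and
\[
q^2\cdot 27+27\le 56^2\cdot 27+27=P_0<p,
\]
so the second hypothesis of Corollary~\ref{coprime-cor} holds as well. Applying that corollary with this $q$, the given finite abelian group $A$, and the given $4$-edge-connected graph $G$ shows that $\F(G,\Z_p\times A)$ is connected.

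The argument has no real obstacle; the only point needing care is the coprimality of $q$. This is why the threshold is stated with $56$ rather than $55$: the worst case, $\ell\in\{5,11\}$, forces $q=56$. Finally, the arithmetic $56^2\cdot 27+27=84{,}672+27=84{,}699$ confirms the stated value of $P_0$.
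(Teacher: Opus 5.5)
Your proposal is correct and follows essentially the same route as the paper: pick $q\in\{55,56\}$ coprime to the prime power $p$, note that $q^2\cdot 27+27\le P_0<p$, and apply Corollary~\ref{coprime-cor}. Your explicit case split on the prime $\ell$ dividing $p$ just spells out the paper's one-line remark that at least one of the coprime integers $55$ and $56$ is coprime to $p$.
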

\begin{proof}
    Since $55$ and $56$ are coprime, at least one of them is coprime with $p$.  Choose $q\in\{55,56\}$ with $\gcd(p,q)=1$.  Now
    \[
        p>P_0\ge q^2\cdot27+27,
    \]
    so we are done by Corollary~\ref{coprime-cor}.
\end{proof}

\subsection{Proof of the Main Theorem}

\begin{proof}[Proof of Theorem~\ref{4-edge-connected-thm}]
Let $P_0=56^2\cdot 27+27$. Write the finite abelian group $A$ as
$A\cong \Z_{a_1}\times\cdots\times\Z_{a_s}$, where each $a_i$ is a prime power.

If some $a_i>P_0$, then we write $A\cong \Z_{a_i}\times A'$ for some abelian group $A'$. In this case the conclusion follows from Corollary~\ref{ZpxA-cor}. Hence we assume that $a_i\le P_0$ for all $i$.

We now construct a direct factor $B$ of $A$ such that $63\le |B|\le P_0$. If some factor $\Z_{a_i}$ satisfies $a_i\ge 63$, then we let $B:=\Z_{a_i}$. Otherwise every $a_i\le 62$, and we take $B$ to be a minimal subproduct of the factors whose order is at least $63$. Since each factor has order at most $62$, the order of $B$ is at most $62^2$; in particular, $|B|<P_0$.

Let $C$ be the complementary direct factor of $B$, so that $A\cong B\times C$. Since $|A|\ge 62P_0+1$ and $|B|\le P_0$, we have $|C|=|A|/|B|>62$; hence, $|C|\ge 63$.

By Corollary~\ref{large-group-admissible-cor}, both $B$ and $C$ are admissible groups. Viewing $A$ as $B\times C$, Lemma~\ref{2group-flow-thm} implies that $G$ is $A$-flow-connected.
\end{proof}

{\small
\bibliographystyle{habbrv} 
\bibliography{references}
}
 
\end{document}